\documentclass[10pt,reqno]{amsart}
\usepackage{amsfonts}
\usepackage{amsmath}
\usepackage{amssymb}
\usepackage{latexsym, bm}
\usepackage[RGB]{xcolor}
\usepackage{mathrsfs}
\usepackage{amsthm}
\usepackage{hyperref}
\numberwithin{equation}{section}
\textwidth=15.0cm \textheight=21cm \hoffset=-1.1cm \voffset=-0.3cm
\newtheorem{theorem}{Theorem}[section]
\newtheorem{assumption}[theorem]{Assumption}
\newtheorem{lemma}[theorem]{Lemma}
\newtheorem{remark}[theorem]{Remark}
\newtheorem{corollary}[theorem]{Corollary}
\newtheorem{proposition}[theorem]{Proposition}
\newcommand{\dif}{\mathrm{d}}

\newcommand{\SUM}[3]{\sum\limits_{{#1}={#2}}^{#3}}

\begin{document}
\title[IBVP for relaxed  compressible Navier-Stokes system]{Global well-posedness and relaxation limit for relaxed  compressible Navier-Stokes-Fourier equations in bounded domain}
\author{Yuxi Hu and Xiaoning Zhao}
\thanks{\noindent Yuxi Hu, Department of Mathematics, China University of Mining and Technology, Beijing, 100083, P.R. China, yxhu86@163.com\\
\indent Xiaoning Zhao, Department of Mathematics, China University of Mining and Technology, Beijing, 100083, P.R. China}
\maketitle
\allowdisplaybreaks

 , 

\begin{abstract}
This paper investigates an initial boundary value problem for the relaxed one-dimensional compressible Navier-Stokes-Fourier equations. By transforming the system into Lagrangian coordinates, the resulting formulation exhibits a uniform characteristic boundary structure. We first construct an approximate system with non-characteristic boundaries and establish its local well-posedness by verifying the maximal nonnegative boundary conditions. Subsequently, through the construction of a suitable weighted energy functional and careful treatment of boundary terms, we derive uniform a priori estimates, thereby proving the global well-posedness of smooth solutions for the approximate system. Utilizing these uniform estimates and standard compactness arguments, we further obtain the existence and uniqueness of global solutions for the original system. In addition, the global relaxation limit is established. The analysis is fundamentally based on energy estimates.\\
{\bf Keywords}: Initial boundary value problem; uniform characteristic boundary;  compressible Navier-Stokes-Fourier equations; global solutions; relaxation limit\\
{\bf AMS classification code}: 35L50, 35A01, 35B25
\end{abstract}

\section{Introduction}

The fundamental equations governing the dynamic behavior of a one-dimensional compressible fluid are given by:
\begin{align}\label{1.1}
	\begin{cases}
		\rho_t+(\rho u)_x=0,\\
		\rho  u_t+\rho u u_x+p_x=S_x,\\
		(\rho \mathcal E)_t+(\rho u \mathcal E+pu+q-Su)_x=0, 
	\end{cases}
\end{align}
with $ (t,x)\in \mathbb{R}^+\times\Omega$ and $\Omega$ is a bounded domain in $\mathbb{R}$. Here, $\rho, u, p, \mathcal E, S,$ and $q $ denote fluid density, velocity, pressure, total energy per unit mass, stress, and heat flux, respectively. 

To close the system $\eqref{1.1}$, the stress $S$ and the heat flux $q$ must be specified. Instead of employing the classical relations  
$$
q = -\kappa(\theta) \theta_x, \qquad\quad S = \mu u_x,  
$$  
where $\theta$ denotes the temperature, $\kappa(\theta)$ represents the thermal conductivity coefficient as a smooth function of $\theta$, and $\mu$ is a positive constant denoting the viscosity coefficient, we consider the relaxed versions in the form of the (nonlinear) Cattaneo law for heat conduction  
\begin{align}\label{1.2}  
\tau_1   (\theta) (\rho q_t + \rho u \cdot q_x) + q + \kappa(\theta) \theta_x = 0,  
\end{align}  
and the Maxwell-type constitutive relation for the stress tensor  
\begin{align}\label{1.3}  
\tau_2   (\rho S_t + \rho u \cdot S_x) + S = \mu u_x.  
\end{align}  
Here, \( \tau_1   (\theta) = \tau    g(\theta) > 0 \) and \( \tau_1   > 0 \) are relaxation parameters, where \( g(\theta) \) is a smooth function of \( \theta \), and \( \tau_1    \) and \( \tau_2   \) are constants.

The constitutive relation \eqref{1.2}, in both its high dimensional and linear forms, was first proposed by Cattaneo \cite{CA48} to resolve the paradox of infinite propagation speed that arises when using Fourier's law to describe heat conduction. Later, Christov \cite{CHR09} introduced an objective version of the linear Cattaneo’s law in the following form:  
\begin{align}\label{CC}  
\hat \tau (q_t+u\cdot \nabla q-q\cdot \nabla u+(\nabla \cdot u) q)+q+\kappa \nabla \theta=0.  
\end{align}  
Notably, various physical experiments have shown that the parameters \( \hat \tau \) and \( \kappa \) may depend on \( \theta \) and \( \rho \) (see \cite{CHE63, CW03}). By assuming \( \hat \tau = \tau_1   (\theta) \rho \) and \( \kappa = \kappa(\theta) \), the objective constitutive relation \eqref{CC} simplifies to \eqref{1.2}.

On the other hand, the high-dimensional form of equation \eqref{1.3} (without the factor \(\rho\)) was first introduced by Maxwell \cite{Max1867} to describe the dynamic behavior of viscoelastic fluids and other complex fluids. An objective version of Maxwell’s law was later presented by Oldroyd \cite{OLD50} in the following three-dimensional form:  
\begin{align}\label{OL}  
\tilde{\tau} (S_t + u \cdot \nabla S + g_a(S, \nabla u)) + S = \mu \left(\nabla u + \nabla u^T - \frac{2}{3} \operatorname{div} u \, I_3 \right) + \lambda \operatorname{div} u \, I_3,  
\end{align}  
where \( \lambda \) and \( \mu \) denote the volume and shear viscosity coefficients, respectively. The function \( g_a(S, \nabla u) \) is given by  
\[
g_a(S, \nabla u) := S W(u) - W(u) S - a (D(u) S + S D(u)), \quad -1 \leq a \leq 1,
\]  
with  
\[
D(u) = \frac{1}{2} (\nabla u + (\nabla u)^T), \quad W(u) = \frac{1}{2} (\nabla u - (\nabla u)^T).
\]  
The parameter \( a \) determines different variants of the Maxwell model: \( a = 1 \) corresponds to the upper-convected Maxwell model (UCM), \( a = -1 \) to the lower-convected Maxwell model (LCM), and \( a = 0 \) to the corotational Maxwell model.  
By assuming \( \tilde{\tau} = \tau_2   \rho \) and setting \( a = 0 \), Oldroyd’s constitutive equation \eqref{OL} simplifies to \eqref{1.3}. Note that the assumption \( \tilde{\tau} = \tau_2   \rho \) was recently proposed by Freistühler \cite{Frei1} in the isentropic fluid dynamic equations, with similar ideas dating back to Ruggeri \cite{Rug83} and M\"uller \cite{Mu67} for the non-isentropic case.  

We further remark that even for simple fluids, the small relaxation parameter \(  \tilde \tau  \) (typically ranging from \( 10^{-9} \) to \( 10^{-12} \) seconds) cannot be neglected. This was demonstrated by Pelton et al. \cite{Peetal} in experiments on the high-frequency vibration of nanoscale mechanical devices immersed in water-glycerol mixtures. Notably, as also discussed in \cite{ChSa015}, equation \eqref{1.3} provides a general framework for characterizing fluid-structure interactions in nanoscale mechanical devices vibrating in simple fluids.

We note that, compared to other relaxed constitutive equations (see \cite{HR20, HRW22, Yong14, Peng-Zhao-22}), the relations \eqref{1.2} and \eqref{1.3} offer several advantages. First, they are objective, ensuring physical consistency. Second, their conservation form facilitates the definition of weak solutions. Most importantly, with the given constitutive relations \eqref{1.2} and \eqref{1.3}, the system \eqref{1.1} possesses a physical entropy. This entropy structure allows for immediate \( L^2 \) estimates without requiring any smallness conditions and provides a framework for analyzing potential blow-up phenomena in the presence of large initial data. For further details, see \cite{HR24}.

Now, let us return to the system \eqref{1.1}–\eqref{1.3}. Due to the use of different constitutive relations compared to the classical system, the energy formulation may be altered. In this paper, we assume that the total energy is given by  
\begin{equation} \label{eq1.3a}
	\mathcal{E} = \frac{1}{2} u^2 + \frac{\tau_2  }{2\mu} S^2 + e(\theta, q),
\end{equation}  
which can be interpreted as the sum of kinetic energy \( \frac{1}{2} u^2 \), relaxed potential energy \( \frac{\tau_2  }{2\mu} S^2 \), and internal energy \( e(\theta, q) \). Specifically, the internal energy \( e \) is assumed to depend on \( q \) and is given by  
\begin{align}\label{1.4}
	e(\theta, q) = C_v \theta + a(\theta) q^2, 
\end{align}  
where  
\[
a(\theta) = \frac{Z(\theta)}{\theta} - \frac{1}{2} Z'(\theta),  
\quad \text{with} \quad Z(\theta) = \frac{\tau_1   (\theta)}{\kappa(\theta)}.
\]  
Here, \( C_v > 0 \) represents the heat capacity at constant volume. Additionally, we assume the pressure \( p \) is given by \( p(\rho, \theta) = R \rho \theta \), where \( R > 0 \) denotes the gas constant. It is straightforward to verify that \( p \) and \( e \) satisfy the standard thermodynamic relation:  
\[
\rho^2 e_\rho = p - \theta p_\theta.
\]  
The dependence of internal energy on \( q^2 \) was first highlighted by Coleman et al. \cite{CFO}, who rigorously proved that for heat equations with a Cattaneo-type law, the formulation \eqref{1.4} is consistent with the second law of thermodynamics. For further discussions, see also \cite{CG, CHO, TA}.

We shall consider the initial boundary value problem for the functions in system \eqref{1.1}-\eqref{1.3}:
\[
(\rho,u,\theta,q,S):   [0,+\infty)\times\Omega \rightarrow \mathbb R^+\times \mathbb R \times \mathbb R^+\times \mathbb R\times \mathbb R
\]
with initial conditions
\begin{align}\label{1.6}
	(\rho(x,0),u(x,0),\theta(x,0),q(x,0),S(x,0))=(\rho_0,u_0,\theta_0,q_0,S_0)
\end{align}
and boundary conditions
\begin{align}\label{1.7}
	u\big|_{\partial\Omega}=q\big|_{\partial\Omega}=0.
\end{align}

We note that the Cauchy problem for the system \eqref{1.1}–\eqref{1.3} and its multi-dimensional form has been extensively studied. In particular, Yong \cite{Yong14} first investigated the isentropic Navier-Stokes equations with a revised Maxwell law (in its linear form), establishing a local well-posedness theory and a local relaxation limit. These results were later extended by Hu and Racke \cite{HR17, HR23} to the non-isentropic case and by Peng \cite{PY2020} to a more general setting.  
The full system \eqref{1.1}–\eqref{1.3} in its one-dimensional form (without the factor \( \rho \) in \eqref{1.2}–\eqref{1.3}) was studied in \cite{HR20}, where the global well-posedness of smooth solutions and the local relaxation limit were established. This work was further extended by Peng and Zhao \cite{Peng-Zhao-22, Peng-Zhao-24} to obtain the global relaxation limit.  
The blow-up phenomenon for this system has also been extensively investigated in \cite{HW2019, HW2020, HRW22, HR24}. For related results in more general systems, see also \cite{Bae023}.  

On the other hand, there are very few results concerning the initial boundary value problem for system \eqref{1.1}-\eqref{1.3}. To our knowledge, only isentropic case was  studied very recently by the first author and Li in \cite{HL24}.  In this paper, we are aim to study the non-isentropic system with more physical coefficients.  Due to the strong coupling between the momentum and energy equations, this extension from isentropic to non-isentropic system  is highly non-trivial. In particular, beyond addressing the challenges arising from the increased complexity of the non-isentropic case, we identify a novel structural property of the system: certain uncontrolled high-order terms involving $u$ and $\theta$ naturally cancel each other. This key observation allows us to close the necessary estimates and establish global solutions in the usual sense.

For  computational convenience,  using \eqref{eq1.3a} and \eqref{1.4}, we rewrite the system \eqref{1.1}-\eqref{1.3} in Lagrangian coordinates as follows:
\begin{align}\label{yuan}
	\begin{cases}
		v_t=u_x,\\
		u_t+{p(v,\theta)}_x=S_x,\\
		e_{\theta}\theta_t-\frac{2a(\theta)}{Z(\theta)}q\theta_x+\frac{R\theta u_x}{v}+q_x=\frac{2a(\theta)}{\tau   (\theta)}q^2v+\frac{S^2v}{\mu},\\
		\tau_1   (\theta)q_t+vq+\kappa(\theta)\theta_x=0,\\
		\tau_2  S_t+vS=\mu u_x,
	\end{cases}
\end{align}
where $v=\frac{1}{\rho}$ is the specific volume per unit mass. The initial and boundary conditions \eqref{1.6}-\eqref{1.7} turn to
\begin{align}\label{initial}
	(v,u,\theta,S,q)(0,x)=(v_0,u_0,\theta_0,S_0,q_0)(x), \quad x\in\Omega
\end{align}
and
\begin{align}\label{boundary}
	u(t,x)\big|_{\partial\Omega}=0,q(t,x)\big|_{\partial\Omega}=0, \quad t>0.
\end{align}

To illustrate the complexity of the problem \eqref{yuan}-\eqref{boundary}, we reformulate it as a first-order hyperbolic system. Let $U=(v,u,\theta,q,S)$, then the system \eqref{yuan} can be written as  
\begin{align}
	A^0(U)U_t+A^1(U)U_x+B(U)U=F(U),
\end{align}
where
\begin{align*}
	&A^0(U)=\mathrm{diag}\{1,1,e_\theta,\tau_1   (\theta),\tau_2  \},
	A^1(U)=
	\begin{pmatrix}
		0& -1& 0& 0& 0\\
		-\frac{R\theta}{v^2}& 0& \frac{R}{v}& 0& -1\\
		0& \frac{R\theta}{v}& -\frac{2a(\theta)}{Z(\theta)}q& 1& 0\\
		0& 0& \kappa(\theta)& 0& 0\\
		0& -\mu& 0& 0& 0\\
	\end{pmatrix},\\
	&B(U)=\mathrm{diag}\{0,0,0, v, v\}, F(U)=\mathrm{diag}\left\{0, 0, -\left(\frac{2a(\theta)}{\tau   (\theta)}q^2+\frac{S^2}{\mu}\right), 0, 0\right\}
\end{align*}
Note that for any \( U \in G := \{\mathbb{R}^+, \mathbb{R}, \mathbb{R}^+, \mathbb{R}, \mathbb{R}\} \), we have \( \det(A^0)^{-1} A^1(U) = 0 \), which implies that \eqref{boundary} is a uniformly characteristic boundary condition.  
It is important to note that for first-order hyperbolic systems with a uniform characteristic boundary, no general well-posedness results exist, even in the local setting \cite{Chen2007, SCH86}. The main difficulty arises from the inability to directly estimate \( U_x \) or \( U_{xx} \) using the estimates of \( U_t \) and \( U_{tt} \). As a result, a loss of derivatives may occur near the boundary, posing significant analytical challenges.

Note that when $\tau_1   =\tau_2  =0$,  system \eqref{1.1} is reduced to classical non-isentropic Navier-Stokes equations as follows:
\begin{align}\label{classical}
\begin{cases}
v_t=u_x,\\
u_t+p(v,\theta)_x=\left(\frac{\mu u_x}{v}\right)_x,\\
C_v\theta_t+\frac{R\theta u_x}{v}
=\left(\frac{\kappa(\theta)\theta_x}{v}\right)_x+\frac{\mu u^2_x}{v},
\end{cases}
\end{align}
for which the large global solutions (without vacuum) was already known, see Kazhikhov \cite{KA}. But the methods there can not be applied to the relaxed system due to the essential change of structure, i.e., from hyperbolic-parabolic to pure hyperbolic system. On the other hand, it has been shown that, see \cite{HR24}, solutions to the relaxed system may blowup in finite time for some large data. 
Therefore, a global defined smooth solutions of system \eqref{yuan}-\eqref{boundary} should not be expected for large data.

We outline the key ideas of our approach as follows. First, following the methodology in \cite{HL24} for the isentropic case and drawing inspiration from \cite{SCH86} on compressible Euler equations, we construct an approximate system with a non-characteristic boundary. By verifying that the boundary condition is maximally nonnegative, we establish local well-posedness using classical results. Next, we derive uniform a priori estimates and obtain global, uniformly smooth solutions for the approximate system. During this process, suitable weighted energy functionals are introduced to obtain uniform estimates with respect to the relaxation parameters. Furthermore, second-order boundary terms naturally emerge when performing integration by parts. By introducing appropriate auxiliary functions and leveraging the intrinsic structure of the system, these second-order boundary terms are transformed into interior integrals, although they involve third-order terms. Fortunately, due to the system's built-in cancellation mechanism, these high-order terms vanish. With the uniform estimates in hand, we then apply a standard compactness argument to obtain a global smooth solution and establish the global relaxation limit for the original system \eqref{yuan}--\eqref{boundary}.

Now, we introduce some notations. $W^{m,p}=W^{m,p}(\Omega),0\le m\le \infty,1\le p\le \infty$, denotes the usual Sobolev space with norm $\left\| \cdot \right\|_{W^{m,p}}$, $H^m$ and $L^p$ stand for $ W^{m,2}$ resp. $W^{0,p}$.  $D^\alpha =\partial_t^{\alpha_1} \partial_x^{\alpha_2}, \, \alpha=(\alpha_1, \alpha_2), \, |\alpha|=\alpha_1+\alpha_2$. 

The following assumptions are needed throughout the paper:
\begin{assumption}\label{assu}
\mbox{}\\
(1) $\tau_1   $ is the same order as $\tau_2  $, i.e.,  \( \tau_1    = O(\tau_2  ) \) as \( \tau_2   \to 0 \).  Without loss of generality, let $\tau:=\tau_1   =\tau_2  $.\\
(2) The initial and boundary data satisfy the usual compatibility condition
\begin{align}\label{compatibility condition}
	\partial^k_tu(0,x)\big|_{\partial\Omega}=\partial^k_tq(0,x)\big|_{\partial\Omega}=0,
	k=0,1
\end{align}
where $\partial_tu(0,x)$ and $\partial_tq(0,x)$ are defined recursively by equation $\eqref{yuan}_2$ and $\eqref{yuan}_4$, respectively.\\
(3) The initial data is well-prepared in the following sense:
\begin{align}\label{new-hu4-1}
\|v_0 q_0+\kappa (\theta_0) (\theta_0)_x\|_{H^1}=O(\sqrt{\tau}), \, \| v_0S_0-\mu (u_0)_x \|_{H^1} =O(\sqrt{\tau}),\, \mathrm{as}\, \tau \rightarrow 0.
\end{align}
(4) $V_0^k \in H^{2-k} $ for $k=0, 1, 2$, where 
\begin{align*}
V_0^k:= \partial_t^k\left(v-1, u ,\theta-1, \sqrt{\tau} q, \sqrt{\tau} S\right)(t=0, \cdot), \, k=0, 1\\
V_0^2:= \tau \partial_t^2 (v-1, u, \theta-1, \sqrt{\tau} q, \sqrt{\tau} S)(t=0, \cdot),
\end{align*}
where $V_0^1, V_0^2$ are defined recursively by equations \eqref{yuan}. 
\end{assumption}

Our main results are stated as follows.
\begin{theorem}\label{global}
	Let Assumption \ref{assu} be satisfied. Then, there exists a constant $\epsilon_0>0$ such that if
	\begin{align}
		E_0:=\SUM k 0 2 \|V_0^k\|_{H^{2-k}}<\epsilon_0,
	\end{align}
	the system  \eqref{yuan}-\eqref{boundary} has a unique global solution $(v,u,\theta,q,S)\in C^1([0,\infty)\times\Omega)$ with 
	\begin{align}
		(v-1,u,\theta-1,q,S) \in C([0,\infty),H^{2-\delta_0}(\Omega)) \cap C^1([0,\infty),H^{1-\delta_0}(\Omega)),
	\end{align}
	for any $\delta_0>0$ and satisfying 
	\begin{align}
		&\sup_{0\le t<\infty} \left(\SUM k 0 1 \left\|\partial_t^k\left(v-1, u, \theta-1, \sqrt{\tau   }q, \sqrt{\tau  }S\right)(t, \cdot)\right\|_{H^{2-k}}^{2}+\tau^2 \left\| \partial_t^2 \left(v, u, \theta, \sqrt{\tau} q, \sqrt{\tau} S\right(t, \cdot))\right\|_{L^2}^2\right),\nonumber\\
		&+\int_{0}^{\infty}\left( \sum_{|\alpha|=1}^{2}\left\|D^{\alpha}(v,u,\theta)(t, \cdot)\right\|_{L^{2}}^{2}+ \SUM k 0 1 \left\|\partial_t^k(q,S)(t ,\cdot)\right\|_{H^{2-k}}^{2}+\tau^2 \|(q_{tt}, S_{tt})(t, \cdot)\|_{L^2}^2\right)\dif t\le CE_0,  \label{new-hu5-1}
	\end{align}
where  $C$ is a universal constant independent of $\tau  $.
\end{theorem}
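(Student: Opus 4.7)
My plan is a three-stage argument that mirrors the outline in the introduction. First, since the reformulated system \eqref{yuan} has a uniformly characteristic boundary and classical Friedrichs theory is not directly applicable, I would introduce a small parameter $\delta>0$ and construct an approximate problem with a non-characteristic boundary, in the spirit of \cite{SCH86,HL24}: for example by adding artificial viscosities $-\delta v_{xx}$ and $-\delta\theta_{xx}$ to $\eqref{yuan}_1, \eqref{yuan}_3$ together with compatible Dirichlet-type conditions on $v,\theta$. After symmetrization by $A^0$, I would verify that the modified boundary condition is maximally nonnegative, i.e.\ that $\langle A^1\nu\xi,\xi\rangle\ge 0$ on the prescribed boundary subspace and that this subspace has the correct dimension. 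Standard theory then provides a local-in-time $H^2$ solution to the approximate system.

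The core of the proof is to derive a priori estimates uniform in both $\delta$ and $\tau$. Defining
\begin{align*}
E(t) := \SUM k 0 1 \left\|\partial_t^k(v-1,u,\theta-1,\sqrt{\tau}q,\sqrt{\tau}S)(t,\cdot)\right\|_{H^{2-k}}^2 + \tau^2 \left\|\partial_t^2(v,u,\theta,\sqrt{\tau}q,\sqrt{\tau}S)(t,\cdot)\right\|_{L^2}^2
\end{align*}
and the corresponding dissipation $D(t)$ on the left of \eqref{new-hu5-1}, I would run a bootstrap under $\sup_{[0,T]}E(t)\le\epsilon_0$ to prove $E(t)+\int_0^t D(s)\dif s\le C E_0$. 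The multipliers come naturally from the symmetrizer $A^0=\mathrm{diag}(1,1,e_\theta,\tau_1(\theta),\tau_2)$; the $\sqrt{\tau}$ weights on $q,S$ at low orders and $\tau$ at the top level are exactly those required so that $A^0$ stays order one while the relaxation term $B(U)=\mathrm{diag}(0,0,0,v,v)$ produces $\tau$-independent dissipation for $q,S$. Pure time-derivative estimates are straightforward because $\partial_t^k$ commutes with the boundary conditions; the delicate part is translating time derivatives into spatial derivatives through $\eqref{yuan}_1, \eqref{yuan}_2$ without losing regularity in the characteristic variables $v$ and $\theta$.

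The main obstacle, I expect, will be at the highest-derivative level when integrating by parts against $\partial_x^2(v-1)$ and $\partial_x^2(\theta-1)$: this generates second-order boundary traces of $u$ and $q$ that cannot be bounded by $E(t)$. I plan to follow the strategy sketched by the authors, namely to introduce auxiliary cut-off/extension functions supported near $\partial\Omega$ and use the boundary versions of $\eqref{yuan}_2$ and $\eqref{yuan}_4$ together with the compatibility conditions \eqref{compatibility condition} to rewrite these boundary integrals as interior ones. The resulting interior terms formally involve third-order derivatives of $u$ and $\theta$ that are not controlled by $D(t)$, and the decisive step will be the structural observation, emphasized in the introduction, that the uncontrolled pieces produced by the momentum and energy equations appear with opposite signs and \emph{cancel exactly}. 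This hidden cancellation mechanism is what makes an $H^2$-closure feasible in the characteristic setting; without it one would inevitably lose a derivative.

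Once the uniform estimates are closed, I would first let $\delta\to 0$: the bound $E(t)\le C E_0$ gives weak-$*$ compactness in $L^\infty_t H^2$, which combined with Aubin-Lions yields strong convergence in $C_t H^{2-\delta_0}\cap C^1_t H^{1-\delta_0}$, producing the claimed global solution of \eqref{yuan}-\eqref{boundary}; the slight loss $H^2\to H^{2-\delta_0}$ is inherent to this compactness step. Uniqueness follows from a standard $L^2$ energy estimate for the difference of two solutions, using only the boundary conditions $u|_{\partial\Omega}=q|_{\partial\Omega}=0$ and $H^2$-regularity of the coefficients. The $\tau$-independence of $E_0$ and of the bound $CE_0$, together with the well-preparedness \eqref{new-hu4-1}, then sets the stage for the subsequent global relaxation limit $\tau\to 0$ to the classical Navier-Stokes-Fourier system \eqref{classical}.
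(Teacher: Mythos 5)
Your overall architecture — approximate non-characteristic problem, uniform weighted energy estimates with $\sqrt{\tau}$-weights on $(q,S)$ and $\tau$-weights at second order, conversion of second-order boundary traces into interior integrals via a structural cancellation, then compactness and a standard uniqueness argument — matches the paper. But the concrete regularization you propose is a genuine problem. The paper does \emph{not} add artificial viscosity: it perturbs only the stress equation by a first-order transport term, replacing $\tau_2 S_t$ with $\tau(S_t+\epsilon b(x)S_x)$, $b(x)=2x-1$, which keeps the system first-order symmetric hyperbolic, makes $\det (A^0)^{-1}A^1\big|_{\partial\Omega}=\epsilon R b(x)\theta/(v^2Z(\theta)e_\theta)\neq 0$, and leaves the boundary conditions $u|_{\partial\Omega}=q|_{\partial\Omega}=0$ and the null space of $M$ unchanged, so that maximal nonnegativity can be checked and Schochet's theory applies. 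Your proposal of adding $-\delta v_{xx}$ and $-\delta\theta_{xx}$ with Dirichlet data on $v,\theta$ is internally inconsistent with the maximal-nonnegativity verification you also invoke (that criterion is for first-order systems), imposes boundary conditions on variables that carry none in the original problem (hence boundary layers as $\delta\to 0$ that would contaminate the uniform estimates precisely where the difficulty sits), and destroys the exact boundary identities the paper exploits — e.g.\ $\left(p(v,\theta)_x-S_x\right)|_{\partial\Omega}=0$ from the momentum equation and $\left(vq+\kappa(\theta)\theta_x\right)|_{\partial\Omega}=0$, hence $\theta_x|_{\partial\Omega}=\theta_{tx}|_{\partial\Omega}=0$, from the Cattaneo law — which are what make the integrations by parts in Lemmas 3.4--3.12 close.

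Relatedly, your description of the key cancellation is too vague to certify. The paper does not use cut-off or extension functions: it multiplies the twice-differentiated Cattaneo equation by $\frac{\tau b(x)}{\mu\tau(\theta)\theta}\theta_{xx}$ (substituting $q_{txx}$ from the energy equation) to extract $\theta_{xx}^2(t,0)+\theta_{xx}^2(t,1)$, and multiplies the twice-differentiated stress equation by $\frac{\overline b(x)}{\mu}u_{xx}$ with $\overline b=-b$ (substituting via the momentum equation) to extract $u_{xx}^2(t,0)+u_{xx}^2(t,1)$; the two uncontrolled third-order interior terms are exactly $-\int_0^1\frac{R\tau b(x)}{\mu v}u_{xx}\theta_{txx}\,\dif x$ and $+\int_0^1\frac{R\tau}{\mu v}\overline b(x)\theta_{txx}u_{xx}\,\dif x$, which cancel because $\overline b=-b$; the trace $q_{xx}|_{\partial\Omega}$ is then bounded by $u_{xx}|_{\partial\Omega}$ through the energy equation evaluated at the endpoints. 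This specific weighted-multiplier construction (and its $t,x$-mixed analogue), together with the $\eta$-absorption of the $\epsilon$-dependent boundary terms in Corollary 3.15, is the core of the proof and is missing from your outline; without it, and with your parabolic regularization in place of the paper's hyperbolic one, the argument as proposed would not close.
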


\begin{remark}
The assumption \( \tau_1 = \tau_2 \) or \( \tau_1 = O(\tau_2) \) as \( \tau_2 \to 0 \) is necessary to obtain global solutions with estimates that are independent of \( \tau_1 \) and \( \tau_2 \). However, if uniform boundedness with respect to \( \tau_1 \) and \( \tau_2 \) is not required, then global solutions can be obtained for any fixed \( \tau_1 > 0 \) and \( \tau_2 > 0 \).
\end{remark}

\begin{remark}
Note that the norm $\|V_0^k\|_{H^{2-k}}=O(1)$ as $\tau\rightarrow 0$, due to the assumption of well-prepared initial data \eqref{new-hu4-1}. To see this, we calculate
\begin{align}
\|\sqrt{\tau} q_t (t=0, \cdot)\|_{H^1}=\left\| \frac{-\kappa (\theta_0)(\theta_0)_x-v_0 q_0}{\sqrt{\tau} g(\theta_0)}\right\|_{H^1}=O(1),\, \mathrm{as}\, \tau \rightarrow 0. 
\end{align}
and
\begin{align}
\|\tau^\frac{3}{2} q_{tt}(t=0, \cdot)\|_{L^2}=\left\| \sqrt{\tau} \left(\frac{-\kappa (\theta)\theta_x -vq }{g(\theta)}\right)_t(t=0, \cdot)\right\|_{L^2}=O(1), \, \mathrm{as}\, \tau \rightarrow 0,
\end{align}
since $\|\sqrt{\tau} q_t(t=0, \cdot)\|_{L^2}=\left\| \frac{-\kappa (\theta_0) (\theta_0)_x -v_0 q_0}{ \sqrt{\tau} g(\theta_0)} \right\|_{L^2}= O(1)$.  The other terms can be done in a similar way.
\end{remark}

Based on the uniform estimates of solutions, we have the following convergence theorem.
\begin{theorem}\label{weak convergence}
	(Global weak convergence). Let  $(v^\tau,u^\tau,\theta^\tau,q^\tau,S^\tau)$ be the global solutions obtained in Theorem \ref{global}, then there exists functions $(v^0,u^0,\theta^0)\in L^\infty(\mathbb{R}^+;H^2(\Omega))$ and $(q^0,S^0) \in L^2(\mathbb{R}^+;H^2(\Omega))$, such that, as $\tau   \rightarrow 0$ up to subsequence,
	\begin{align}
		(v^\tau,u^\tau,\theta^\tau)\rightharpoonup (v^0,u^0,\theta^0)\quad
		weakly - \ast\quad in \quad L^\infty(\mathbb{R}^+;H^2(\Omega)),\\
		(q^\tau,S^\tau)\rightharpoonup (q^0,S^0)\quad
		weakly \quad in \quad L^2(\mathbb{R}^+;H^2(\Omega)),
	\end{align}
	where $(v^0,u^0,\theta^0)$ is the classical solution to the one dimensional non-isentropic compressible Navier-Stokes equations \eqref{classical} satisfying the boundary condition \eqref{boundary}, with initial value $(v^0,u^0,\theta^0 (t=0, x))=(v_0, u_0, \theta_0)(x)$. Moreover,
	$$q^0=-\frac{\kappa(\theta^0)(\theta^0)_x}{v^0},\quad S^0=\mu\frac{(u^0)_x}{v^0}.$$
\end{theorem}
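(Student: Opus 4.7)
The strategy is the standard weak/strong compactness argument, with the $\tau$-uniform bounds of Theorem~\ref{global} doing all the heavy lifting. First I would extract subsequences: the estimate \eqref{new-hu5-1} provides, uniformly in $\tau$, bounds on $(v^\tau-1, u^\tau, \theta^\tau-1)$ and its first time derivative in $L^\infty(\mathbb{R}^+; H^2(\Omega))$ and $L^\infty(\mathbb{R}^+; H^1(\Omega))$ respectively, and on $(q^\tau, S^\tau)$ and its first time derivative in $L^2(\mathbb{R}^+; H^2(\Omega))$ and $L^2(\mathbb{R}^+; H^1(\Omega))$ respectively. Banach-Alaoglu then delivers, along a subsequence, limits $(v^0,u^0,\theta^0)$ weakly-$\ast$ in $L^\infty(\mathbb{R}^+;H^2)$ and $(q^0,S^0)$ weakly in $L^2(\mathbb{R}^+;H^2)$.

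Next I would upgrade the convergence of $(v^\tau, u^\tau, \theta^\tau)$. Since these sit in $L^\infty_t H^2_x$ with time derivatives in $L^\infty_t H^1_x$, the Aubin-Lions-Simon lemma applied on each finite slab $[0,T]\times\Omega$ yields strong convergence in $C([0,T]; H^{2-\delta}(\Omega))$ for every $\delta\in(0,1/2)$, and since in one space dimension $H^{2-\delta}\hookrightarrow C^1(\overline{\Omega})$, one actually obtains strong $C^1$ convergence on compact time intervals. This immediately lets every nonlinear coefficient $\kappa(\theta^\tau)$, $a(\theta^\tau)$, $p(v^\tau,\theta^\tau)$, $1/v^\tau$, and every spatial derivative $v^\tau_x$, $u^\tau_x$, $\theta^\tau_x$ pass to its natural limit in the strong topology, while products of such factors with the weakly convergent $(q^\tau,S^\tau)$ pass in $L^2(\mathbb{R}^+;L^2)$ by strong-times-weak convergence.

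With this machinery in place I would identify $q^0$ and $S^0$ from the constitutive laws. Rewriting $\eqref{yuan}_4$--$\eqref{yuan}_5$ as
\begin{align*}
v^\tau q^\tau + \kappa(\theta^\tau)\theta^\tau_x = -\tau g(\theta^\tau)\, q^\tau_t, \qquad v^\tau S^\tau - \mu u^\tau_x = -\tau S^\tau_t,
\end{align*}
the right-hand sides vanish strongly in $L^2(\mathbb{R}^+;L^2)$, because $q^\tau_t,S^\tau_t$ are uniformly bounded in $L^2_tL^2_x$ while the prefactor $\tau$ is explicit. Passing to the limit on the left-hand sides yields $q^0 = -\kappa(\theta^0)\theta^0_x/v^0$ and $S^0 = \mu u^0_x/v^0$. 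Substituting these into the weak limits of $\eqref{yuan}_1$--$\eqref{yuan}_3$ then recovers the classical system \eqref{classical}; for the energy equation one uses the key observation that $a(\theta) = O(\tau)$ by definition of $Z(\theta) = \tau g(\theta)/\kappa(\theta)$, so the apparently singular source $\tfrac{2a(\theta^\tau)}{\tau g(\theta^\tau)}(q^\tau)^2 v^\tau$ is in fact a bounded continuous function of $(\theta^\tau,q^\tau,v^\tau)$, and the coefficient $e_\theta = C_v + a'(\theta)q^2$ converges (weakly) to $C_v$. The initial and boundary values persist in the limit through the $C([0,T];C^1)$ convergence of $(v^\tau,u^\tau,\theta^\tau)$ and through trace continuity of $(q^\tau,S^\tau)$ in $L^2_tH^2_x$.

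The main obstacle is the absence of strong compactness for $(q^\tau,S^\tau)$: these two variables encode the relaxation dynamics and can only be controlled weakly. Every bilinear term in which they appear -- notably $v^\tau q^\tau$, $q^\tau\theta^\tau_x$, $(S^\tau)^2 v^\tau$, and the $q^2$-corrections inside $e_\theta$ -- must therefore be paired with a strongly $C^1$-convergent factor supplied by $(v^\tau,u^\tau,\theta^\tau)$ in order for strong-times-weak convergence to carry the limit through. A secondary subtlety is the formally singular $1/\tau$ coefficient appearing in the energy source; recognising that $a(\theta)/\tau g(\theta)$ is actually $O(1)$ uniformly in $\tau$ is the one non-routine analytic step that makes the entire limiting procedure well defined.
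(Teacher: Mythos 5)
Your overall architecture (uniform bounds $\Rightarrow$ weak limits, Aubin--Lions for $(v^\tau,u^\tau,\theta^\tau)$, killing the $\tau$-terms in the constitutive laws to identify $q^0$ and $S^0$) matches the paper, but there is one genuine gap: your treatment of the terms that are \emph{quadratic} in $q^\tau$ and $S^\tau$. You assert that every term involving $(q^\tau,S^\tau)$ is ``bilinear'' and passes to the limit by strong-times-weak convergence, listing $(S^\tau)^2v^\tau$ and $\tfrac{2a(\theta^\tau)}{\tau(\theta^\tau)}(q^\tau)^2v^\tau$ among them. These are not of that form: they are quadratic in the weakly convergent variables with $O(1)$ coefficients (indeed $\tfrac{2a(\theta)}{\tau(\theta)}=\tfrac{2(h(\theta)/\theta-h'(\theta)/2)}{g(\theta)}$ does not vanish as $\tau\to0$, and the limit energy equation genuinely needs $\tfrac{(S^\tau)^2v^\tau}{\mu}\to\tfrac{\mu((u^0)_x)^2}{v^0}$ to produce the viscous heating term of \eqref{classical}). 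Weak $L^2$ convergence of $S^\tau$ does not imply convergence of $(S^\tau)^2$ even in the sense of distributions --- the weak limit can pick up a defect --- so ``pairing with a strongly convergent factor $v^\tau$'' does not rescue these terms. Your stated premise that $(q^\tau,S^\tau)$ ``can only be controlled weakly'' is precisely what makes your argument incomplete.

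The missing step, which is how the paper closes this, is that strong compactness for $(q^\tau,S^\tau)$ \emph{is} available: from the constitutive relations one reads off $q^\tau=-(\kappa(\theta^\tau)\theta^\tau_x+\tau(\theta^\tau)q^\tau_t)/v^\tau$ and $S^\tau=(\mu u^\tau_x-\tau S^\tau_t)/v^\tau$, and the uniform estimate \eqref{new-hu5-1} bounds $\theta^\tau_x$, $u^\tau_x$ in $L^\infty_tH^1_x$ and $\sqrt{\tau}\,\partial_t(q^\tau,S^\tau)$ in $L^\infty_tH^1_x$, so that $\sup_t\|(q^\tau,S^\tau)\|_{H^1}\le CE_0$ uniformly in $\tau$. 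Combined with the $L^2_tH^1_x$ bound on $(q^\tau_t,S^\tau_t)$ from the dissipation, Aubin--Lions gives relative compactness of $(q^\tau,S^\tau)$ in $C([0,T];H^{1-\delta_0})$, hence strong $L^2_{loc}$ convergence, and the quadratic terms then pass to the limit. Once you insert this observation your argument is sound; without it the passage to the limit in the energy equation does not go through.
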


The paper is organized as follows. In Section 2 we show the local existence theorem of approximating system. The uniform a prior estimates is established in Section 3. In Section 4, we justify the limit and prove our main theorem.

\section{approximate system and local existence}
In this part, we construct an approximate system with non-characteristic boundary, for which a unique local defined smooth solutions is given. Without loss of generality, we take $\Omega=[0,1]$. 

Our approximate system is as follows:
\begin{align}\label{approximate}
\begin{cases}
v^\epsilon_t=u^\epsilon_x,\\
u^\epsilon_t+{p(v^\epsilon,\theta^\epsilon)}_x=S^\epsilon_x,\\
e_{\theta}\theta^\epsilon_t-\frac{2a(\theta^\epsilon)}{Z(\theta^\epsilon)}q^\epsilon\theta^\epsilon_x+\frac{R\theta^\epsilon u^\epsilon_x}{v^\epsilon}+q^\epsilon_x=\frac{2a(\theta^\epsilon)}{\tau   (\theta^\epsilon)}(q^\epsilon)^2v^\epsilon+\frac{(S^\epsilon)^2v^\epsilon}{\mu},\\
\tau   (\theta^\epsilon)q^\epsilon_t+v^\epsilon q^\epsilon+\kappa(\theta^\epsilon)\theta^\epsilon_x=0,\\
\tau  (S^\epsilon_t+\epsilon b(x)S^\epsilon_x)+v^\epsilon S^\epsilon=\mu u^\epsilon_x,
\end{cases}
\end{align}	
where $\tau(\theta^\epsilon)=\tau g(\theta^\epsilon)$,  $b(x)=2x-1$  and initial and boundary conditions are given by 
\begin{align}\label{ainitial}
(v^\epsilon,u^\epsilon,\theta^\epsilon,q^\epsilon,S^\epsilon)(0,x)=(v_0,u_0,\theta_0,q_0,S_0)(x),
\end{align}
and
\begin{align}\label{aboundary}
u^\epsilon(t,0)=u^\epsilon(t,1)=q^\epsilon(t,0)=q^\epsilon(t,1)=0.
\end{align}

Let $U^\epsilon=(v^\epsilon,u^\epsilon,\theta^\epsilon,q^\epsilon,S^\epsilon)$, then we have
$$A^0(U^\epsilon)U^\epsilon_t+A^1(U^\epsilon)U^\epsilon_x+B(U^\epsilon)U^\epsilon=F^\epsilon(t,x),$$
where
\begin{align*}
&A^0(U^\epsilon)=\mathrm{diag}\left\{\frac{R\theta^\epsilon}{(v^\epsilon)^2},1,\frac{e_\theta(\theta^\epsilon,q^\epsilon)}{\theta^\epsilon},\frac{Z(\theta^\epsilon)}{\theta^\epsilon},\frac{\tau  }{\mu}\right\},\\
&A^1(U^\epsilon)=
\begin{pmatrix}
0& -\frac{R\theta^\epsilon}{(v^\epsilon)^2}& 0& 0& 0\\
-\frac{R\theta^\epsilon}{(v^\epsilon)^2}& 0& \frac{R}{v^\epsilon}& 0& -1\\
0& \frac{R}{v^\epsilon}& -\frac{2a(\theta^\epsilon)q^\epsilon}{\theta^\epsilon Z(\theta^\epsilon)}& \frac{1}{\theta^\epsilon}& 0\\
0& 0& \frac{1}{\theta^\epsilon}& 0& 0\\
0& -1& 0& 0& \frac{\tau  \epsilon}{\mu}b(x)\\
\end{pmatrix},\\
&B(U^\epsilon) =\mathrm{diag}\left\{0, 0, 0,  \frac{v^\epsilon}{\theta^\epsilon\kappa(\theta^\epsilon)},  \frac{v^\epsilon}{\mu}\right\},
F^\epsilon(t,x)=\mathrm{diag}\left\{0,0,-\frac{1}{\theta^\epsilon}\left(\frac{2a(\theta^\epsilon)(q^\epsilon)^2}{\tau   (\theta^\epsilon)}+\frac{(S^\epsilon)^2}{\mu}\right), 0,  0\right\}.
\end{align*}
with initial condition $U^\epsilon(0,x)=U_0:=(v_0,u_0,\theta_0,q_0,S_0)(x)$ and boundary condition
\begin{align*}
MU^\epsilon\big|_{\partial\Omega}=0,\mathrm{with}\
M=
\begin{pmatrix}
0& 0& 0& 0& 0\\
0& 1& 0& 0& 0\\
0& 0& 0& 0& 0\\
0& 0& 0& 1& 0\\
0& 0& 0& 0& 0\\
\end{pmatrix}.
\end{align*}
Note that $$\det (A^0)^{-1}A^1(U^\epsilon)\big|_{\partial\Omega}=\epsilon Rb(x)\frac{\theta^\epsilon}{(v^\epsilon)^2Z(\theta^\epsilon)e_\theta(\theta^\epsilon,q^\epsilon)}|_{\partial\Omega} \neq 0$$ for any $U\in G:=\{\mathbb{R}^+,\mathbb{R},\mathbb{R}^+,\mathbb{R},\mathbb{R}\}$. So, the boundary condition $\eqref{aboundary}$ is a non-characteristic boundary for any $\epsilon>0$.

Now, we show the boundary condition is maximally nonnegative, i.e., the matrix $A^1(U^\epsilon)\cdot \nu|_{\partial\Omega}$ is positive semidefinite on the null space $N$ of $M$ but not on any space containing $N$. Here $\nu(0)=-1, \nu(1)=1$. Let $\xi=(\xi_1,0,\xi_2,0,\xi_3)^T \in \mathrm{ker} M = \mathrm{span} \{(1,0,0,0,0)^T,(0,0,1,0,0)^T,(0,0,0,0,1)^T\}$, then,
\begin{align*}
\xi^TA^1\cdot\nu|_{\partial\Omega}\xi=\left(-\frac{2a(\theta^\epsilon)q^\epsilon}{\theta^\epsilon Z(\theta^\epsilon)}(\xi_2)^2+\frac{\tau  }{\mu}\epsilon b(x)(\xi_3)^2\right)\cdot\nu|_{\partial\Omega}
=\frac{\tau  \epsilon}{\mu}(\xi_3)^2
\ge 0.
\end{align*}
On the other hand, there are exactly four spaces containing ker $M$ as a proper subspace, they are $$\mathrm{span}\{(1,0,0,0,0)^T,(0,0,1,0,0)^T,(0,0,0,0,1)^T,(0,1,0,0,0)^T\},$$$$\mathrm{span}\{(1,0,0,0,0)^T,(0,0,1,0,0)^T,(0,0,0,0,1)^T,(0,0,0,1,0)^T\},$$$$\mathrm{span}\{(1,0,0,0,0)^T,(0,0,1,0,0)^T,(0,0,0,0,1)^T,(0,1,0,1,0)^T\} \quad\mathrm{and}\quad\mathbb{R}^5. $$
So, we take $\psi_1=(1,1,0,0,0)^T$, $\psi_2=(0,0,1,-1,0)^T$ and $\psi_3=(1,1,0,1,0)^T$ respectively,  and calculate
\begin{align*}
&\psi_1^TA^1\cdot\nu|_{\partial\Omega}\psi_1=-\frac{2R\theta^\epsilon}{(v^\epsilon)^2}<0,\\
&\psi_2^TA^1\cdot \nu \psi_2=\left(-\frac{2a(\theta)q}{\theta Z(\theta)}-\frac{2}{\theta}\right) \cdot \nu 
=-\frac{2}{\theta} \cdot \nu<0, \, \mathrm{for}\, x=1\\
&\psi_3^TA^1\cdot\nu|_{\partial\Omega}\psi_3=-\frac{2R\theta^\epsilon}{(v^\epsilon)^2}<0.
\end{align*}
Thus, the maximally nonnegative property is satisfied. Therefore, classical results implies local well-posedness theory, see \cite{SCH86}.
\begin{lemma}
Suppose $(v_0,u_0,\theta_0,q_0,S_0)\in H^2$ satisfying the compatibility condition $\eqref{compatibility condition}$ and
\begin{align}\label{new-hu-1}
\min_{x \in [0,1]}v_0(x),\min_{x \in [0,1]}\theta_0(x)>0, \min_{x\in[0,1]} (C_v+a^\prime(\theta_0) q_0^2) >0.
\end{align}
Then there exists a unique local solution $(v^\epsilon,u^\epsilon,\theta^\epsilon,q^\epsilon,S^\epsilon)$ to initial boundary value problem $\eqref{approximate}-\eqref{aboundary}$ on some time interval $[0,T]$ with
\begin{align*}
(v^\epsilon,u^\epsilon,\theta^\epsilon,q^\epsilon,S^\epsilon)\in C^0([0,T],H^2)\cap C^1([0,T],H^1),\\
\min_{x \in [0,1]}v(t,x)>0,\min_{x \in [0,1]}\theta(t,x)>0, \min_{x\in[0,1]} (C_v+a^\prime(\theta(t,x)) q^2(t,x)) >0, \quad \forall t>0. 
\end{align*}
\end{lemma}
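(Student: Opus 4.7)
The plan is to invoke the classical local well-posedness theorem for first-order quasilinear symmetric hyperbolic initial-boundary value problems with non-characteristic maximally nonnegative boundary, as developed by Schochet \cite{SCH86}. The authors' reformulation has already done the crucial verifications: the system is cast in symmetric form $A^0(U^\epsilon) U^\epsilon_t + A^1(U^\epsilon) U^\epsilon_x + B(U^\epsilon) U^\epsilon = F^\epsilon$ with $A^1$ manifestly symmetric, the modification $\tau\epsilon b(x) S^\epsilon_x$ in the last equation makes the boundary non-characteristic for every $\epsilon>0$, and the boundary condition $MU^\epsilon|_{\partial\Omega}=0$ has been shown to be maximally nonnegative. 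Thus the proof reduces to assembling these ingredients into the abstract framework.

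First I would confirm that $A^0(U^\epsilon)$ is symmetric positive definite in a neighborhood of $U_0(x)$. Symmetry is clear since $A^0$ is diagonal; positive definiteness reduces to pointwise positivity of the entries $R\theta^\epsilon/(v^\epsilon)^2$, $e_\theta(\theta^\epsilon,q^\epsilon)/\theta^\epsilon$, $Z(\theta^\epsilon)/\theta^\epsilon$ and $\tau/\mu$, which follows from the hypotheses \eqref{new-hu-1} (noting $e_\theta = C_v + a'(\theta)q^2$) together with positivity of $\tau$, $\mu$, and of $Z(\theta)$ for $\theta$ near $\theta_0$. These lower bounds persist by continuity on a short time interval.

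Then I would run a standard Picard iteration: setting $U^{\epsilon,0}\equiv U_0$, define $U^{\epsilon,n+1}$ as the solution of the linear IBVP obtained by freezing $A^0, A^1, B, F^\epsilon$ at $U^{\epsilon,n}$, with the same initial and boundary data. Each linear problem is symmetric hyperbolic with non-characteristic maximally nonnegative boundary, and the compatibility conditions \eqref{compatibility condition} propagate along the iteration because they are phrased in terms of $u, q$ and their first time derivatives on $\partial\Omega$. Schochet's linear theory then provides unique $H^2$-solutions with energy estimates uniform in $n$ on a common short interval $[0,T]$. A contraction argument in $L^2$ combined with weak-$*$ compactness in $L^\infty(0,T;H^2)$ extracts a limit $U^\epsilon \in C^0([0,T]; H^2)\cap C^1([0,T]; H^1)$ solving \eqref{approximate}--\eqref{aboundary}; uniqueness follows by an elementary $L^2$-energy estimate on the difference of two solutions, and the claimed strict positivity of $v^\epsilon$, $\theta^\epsilon$, and $C_v + a'(\theta^\epsilon)(q^\epsilon)^2$ persists by continuity from $H^2 \hookrightarrow C^0$ after possibly shrinking $T$.

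The principal technical point is the uniform-in-$n$ $H^2$ estimate. Because the modified boundary is non-characteristic, spatial derivatives $\partial_x^k U^\epsilon$ near $\partial\Omega$ can be recovered from $\partial_t^k U^\epsilon$ via the evolution equations, so it suffices to control tangential (time) derivatives; differentiating the linearized system in $t$ up to order two and performing an energy identity, the maximally nonnegative boundary structure ensures the boundary integrals have the favorable sign needed for closure. This is precisely the content of Schochet's framework, so no argument beyond that already codified in \cite{SCH86} is required; the only nontrivial check is that the iterates remain in the regime where $A^0(U^{\epsilon,n})$ is uniformly positive definite, which again follows from smallness of $T$.
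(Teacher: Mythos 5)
Your proposal is correct and takes essentially the same route as the paper: the authors verify the symmetric hyperbolic structure, the non-characteristic boundary (for $\epsilon>0$), and the maximal nonnegativity of the boundary matrix, and then cite Schochet's classical local well-posedness theory, which is exactly the reduction you carry out. Your additional sketch of the Picard iteration and the persistence of positivity by continuity fills in detail the paper leaves implicit but does not change the argument.
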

 \begin{remark}
 Note that the initial assumption \eqref{new-hu-1} is to ensure the hyperbolicity of system \eqref{approximate}. In fact,
	\begin{align*}
		e_\theta&=C_v+a^\prime(\theta)q^2
		=C_v+\left(\frac{\tau   h^\prime(\theta)}{\theta}-\frac{\tau   h(\theta)}{\theta^2}-\frac{\tau   h^{\prime\prime}(\theta)}{2}\right)q^2\\
		&=C_v+\tau   q^2\left(\frac{h^\prime(\theta)}{\theta}-\frac{h(\theta)}{\theta^2}-\frac{h^{\prime\prime}(\theta)}{2}\right)
		= C_v+\tau   q^2l(\theta),
	\end{align*}
	where $l(\theta)=\frac{h^\prime(\theta)}{\theta}-\frac{h(\theta)}{\theta^2}-\frac{h^{\prime\prime}(\theta)}{2}$.  Therefore, for sufficiently small of $\theta-1$ and $\sqrt{\tau   } q$ (the situation for global solutions), it can be deduced that $e_\theta\ge \frac{C_v}{2}>0$.
\end{remark}

\section{uniform a prior estimates}

In this section, we establish uniform a priori estimates with respect to \( \tau   \) and \( \epsilon \), which enable us to obtain uniform global solutions. By taking the limit  \( \epsilon \to 0 \), we ultimately obtain the global well-posedness of smooth solutions for the original system \eqref{yuan}-\eqref{boundary}. 

For simplicity, we still denote $(v^\epsilon,u^\epsilon,\theta^\epsilon,q^\epsilon,S^\epsilon)$ by $(v,u,\theta,q,S)$ without of confusion.
First, define the weighed  energy
$$
E(t):=\sup_{0\leq s \leq t} \left(\SUM k 0 1 \left\|\partial_t^k(v-1, u, \theta-1, \sqrt{\tau   }q, \sqrt{\tau  }S) (s, \cdot)\right\|_{H^{2-k}}^{2}+\tau^2 \| \partial_t^2 (v, u, \theta, \sqrt{\tau} q, \sqrt{\tau} S)(s, \cdot)\|_{L^2}^2\right)
$$
and dissipation
$$
\mathcal{D}(t):=\sum_{|\alpha|=1}^{2}\left\|D^{\alpha}(v,u,\theta) (t, \cdot)\right\|_{L^{2}}^{2}+ \SUM k 0 1 \left\|\partial_t^k(q,S)(t, \cdot)\right\|_{H^{2-k}}^{2}+\tau^2 \|(q_{tt}, S_{tt})(t, \cdot)\|_{L^2}^2.
$$
We are aiming to show the following a prior estimates.

\begin{proposition}\label{zong}
Let  Assumption \ref{assu} hold and $(v,u,\theta,q,S)\in C^0([0,T],H^2)$ be local solutions to system $\eqref{approximate}-\eqref{aboundary}$. Then, there exists a small $\delta$ such that if  $E(t)\le \delta$,  we have
$$
E(t)+\int_{0}^{t}\mathcal{D}(s)\dif s\le C\left(E_0+E^\frac{1}{2} \int_0^t \mathcal D(s)\dif s+E^\frac{3}{2}(t)\right),
$$
where $C$ is a constant independent of $\tau   ,\tau  $ and $\epsilon$.
\end{proposition}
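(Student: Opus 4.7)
The plan is to run weighted energy estimates at three temporal-derivative levels (zero, one, and $\tau$-weighted two) and then algebraically recover the $H^2$ spatial estimates from the relaxation equations. At the base level I would use the symmetrizer $A^0(U)$ displayed in Section 2: multiply the momentum equation by $u$, the energy equation by $(\theta-1)/\theta$, the $q$-equation by $q/(\theta\kappa(\theta))$, and the $S$-equation by $S/\mu$, then integrate by parts. The no-slip conditions $u|_{\partial\Omega}=q|_{\partial\Omega}=0$ kill most flux terms, the damping $vq$ and $vS$ furnish dissipation of $q$ and $S$, and the non-characteristic correction $\tau\epsilon b(x)S_x$ contributes a sign-definite boundary term since $b(0)=-1, b(1)=1$. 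This yields the basic estimate
\[
\|(v-1,u,\theta-1,\sqrt{\tau}q,\sqrt{\tau}S)\|_{L^2}^2+\int_0^t\bigl(\|q\|_{L^2}^2+\|S\|_{L^2}^2\bigr)\,\dif s\le CE_0+\text{cubic remainder}.
\]

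For higher regularity in time I would differentiate the system once in $t$ and then twice with a $\sqrt{\tau}$ weight, reapplying the symmetric multipliers $\partial_t u$, $(\theta_t)/\theta$, $\partial_t q/(\theta\kappa)$, $\partial_t S/\mu$ (and their second-level analogues). The compatibility conditions \eqref{compatibility condition} guarantee $\partial_t^k u|_{\partial\Omega}=\partial_t^k q|_{\partial\Omega}=0$ for $k=0,1$, so the boundary contributions of the same type reappear with the correct sign. This adds $\partial_t^k q, \partial_t^k S$ ($k=1,2$) to the dissipation at the $\tau$-powers prescribed by $\mathcal{D}(t)$. Spatial derivatives are then recovered algebraically from the two relaxation laws
\[
\kappa(\theta)\theta_x=-\tau g(\theta)q_t-vq,\qquad \mu u_x=\tau S_t+vS+\tau\epsilon b(x)S_x,
\]
which convert $\|\theta_x\|_{H^1}$ and $\|u_x\|_{H^1}$ into norms of $q, S$ and their $t$-derivatives already dominated by $\mathcal{D}$; $v_x$ is then extracted by solving $\eqref{yuan}_2$ for the $R\theta v_x/v^2$ term. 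Finally, $v_t=u_x$ transfers all $v$-estimates to any mixed $(t,x)$-derivative.

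The principal obstacle is the boundary contribution arising when one performs $H^2$ spatial integration by parts in the time-differentiated estimates. Multiplying $\partial_t$-differentiated equations by $\partial_{xx}$ of the unknowns (needed to close the full $H^2$ norm) generates second-order boundary traces such as $p_x u_t|_{\partial\Omega}$ and $S_x \theta_t|_{\partial\Omega}$ for which there is no direct control, since $p$, $\theta$, $v$, $S$ have no prescribed boundary values. The strategy is to introduce auxiliary functions that push these boundary traces back into the interior via a reverse integration by parts in $t$. This exchange converts the bad boundary terms into cubic interior integrals that pair the $u$- and $\theta$-contributions with opposite signs, owing to the common factor $R\theta/v$ appearing both in the pressure term of $\eqref{yuan}_2$ and in the $R\theta u_x/v$ term of $\eqref{yuan}_3$; this is the intrinsic cancellation mentioned in the introduction, and it is the crux of the argument. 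Once all the estimates are added and the smallness $E^{1/2}\le\delta^{1/2}$ is used to absorb cross terms of the form $CE^{1/2}\!\int_0^t\!\mathcal{D}\,\dif s$ on the right-hand side, the stated inequality follows with a constant $C$ independent of $\tau$ and $\epsilon$.
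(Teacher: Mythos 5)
Your outline captures the weighted-energy framework, and you have correctly located the deepest structural point: the paper's key boundary lemmas (Lemmas \ref{le-hu-2} and \ref{le-hu-1}) do rest on the fact that the uncontrollable third-order interior integrals of the form $\int_0^1 \frac{R\tau b(x)}{\mu v}u_{xx}\theta_{txx}\,\dif x$, produced on one side by the pressure coefficient $p_\theta=R/v$ in the momentum equation and on the other by the coefficient $R\theta/v$ of $u_x$ in the energy equation, appear with opposite signs and cancel exactly. Nevertheless there are two genuine gaps. First, your plan runs energy estimates only in the time direction and asserts that the spatial components of $E$ and $\mathcal D$ then follow ``algebraically'' from the relaxation laws. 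That recovery only goes one way: it expresses derivatives of $(v,u,\theta)$ through derivatives of $(q,S)$ (for instance $\kappa(\theta)\theta_{xx}$ requires $\tau q_{tx}$ and $q_x$, and $\mu u_{xx}$ requires $S_x$, $\tau S_{tx}$ and $\tau\epsilon S_{xx}$), so you must first put $\|q\|_{H^2}$, $\|S\|_{H^2}$, $\|q_t\|_{H^1}$, $\|S_t\|_{H^1}$ into the dissipation, and those quantities can only come from energy estimates on the $\partial_x$-, $\partial_t\partial_x$- and $\partial_x^2$-differentiated system (the paper's Lemmas \ref{yijiex}, \ref{le3.9} and \ref{le3.11}). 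Declaring them ``already dominated by $\mathcal D$'' is circular: the proposition is precisely the bound on $\int_0^t\mathcal D$, and without the spatial estimates the chain (dissipation of $\theta_t$ needs $q_x$, dissipation of $q_x$ needs the spatial estimate for the $q$-equation, \dots) never closes.

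Second, the boundary traces you single out, $p_xu_t|_{\partial\Omega}$ and $S_x\theta_t|_{\partial\Omega}$, are in fact harmless: the momentum equation together with $u_t|_{\partial\Omega}=0$ gives $(p(v,\theta)_x-S_x)|_{\partial\Omega}=0$, which is exactly how the paper annihilates all first-order flux terms. The genuinely dangerous traces occur in the pure $\partial_x^2$ estimate and are $\theta u_{tx}u_{xx}\big|_0^1$ and $\theta_{xx}q_{xx}\big|_0^1$; a ``reverse integration by parts in $t$'' does not control them because nothing prescribes $u_{xx}$, $u_{tx}$, $\theta_{xx}$ or $q_{xx}$ on $\partial\Omega$. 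The device you are missing is the spatial weight $b(x)=2x-1$, which equals $\mp1$ at the endpoints: multiplying the twice-$x$-differentiated $q$- and $S$-equations by $\frac{\tau b(x)}{\mu\tau(\theta)\theta}\theta_{xx}$ and $-\frac{b(x)}{\mu}u_{xx}$ turns the traces into sign-definite terms $C_0\left(u_{xx}^2+\theta_{xx}^2\right)(t,i)$ on the good side of the inequality, at the cost of precisely the third-order interior integrals that then cancel as described above; the remaining trace $q_{xx}|_{\partial\Omega}$ is bounded by $u_{xx}|_{\partial\Omega}$ by evaluating the once-$x$-differentiated energy equation at the boundary. Without this weighted-multiplier step the second-order estimate, and hence the proposition, cannot be closed.
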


Without loss of generality, we assume $g(1)=\kappa(1)=1$, $\tau \le 1$ and $\epsilon<<1$.  Moreover, we assume $\delta$ is small enough such that 
\begin{align}
&\frac{3}{4}\le v(t,x),\theta(t,x)\le\frac{5}{4}, \label{new-hu2-1}\\
&2C_v\ge e_{\theta}\ge \frac{C_v}{2},\, 2\tau \ge Z(\theta)=\frac{\tau   (\theta)}{\kappa(\theta)}\ge \frac{\tau   }{2}, |e_{\theta\theta}|+|e_{\theta q}|+|e_{\theta \theta\theta}|+
|e_{\theta \theta q}|+|e_{\theta q q}| \le C.  \label{new-hu4-3}\\
&a(\theta)+\left(\frac{Z(\theta)}{2\theta}\right)^\prime=\frac{1}{\theta}(1-\frac{1}{2\theta}) Z(\theta)+(\frac{1}{\theta}-1) Z^\prime(\theta) \ge \frac{1}{4} \tau    \label{new-hu-2}\\
&|q(t,x)|_{L^\infty}=\left| \frac{\tau   (\theta) q_t+\kappa (\theta) \theta_x}{v}\right|_{L^\infty}\le C E^\frac{1}{2}(t),\,  |S(t,x)|_{L^\infty}= \left| \frac{ -\tau S_t+\mu u_x}{v}\right|_{L^\infty} \le CE^\frac{1}{2}(t). \label{new-hu4-4}
\end{align}
$ C$ denotes a universal constant which is independent of $\tau  $ and $\epsilon$.

First, we have the following $L^2$-estimates of solutions.
\begin{lemma}\label{0j}
There exists some constant $C$ such that
\begin{align}	\label{0jgjx}
\int_{0}^{1}\left((v-1)^2+u^2+ (\theta-1)^2+\tau   q^2+\tau  S^2\right)\dif x+\int_{0}^{t}\int_{0}^{1}(q^2+S^2)\dif x\dif t
\le CE_0.
\end{align}
\end{lemma}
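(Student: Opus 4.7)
The plan is to derive \eqref{0jgjx} from a single relative-energy functional that reflects the physical entropy structure emphasized in Section 1. A natural candidate is
$$
\mathcal{F}(t) := \int_0^1 \left[\tfrac{u^2}{2} + R(v-1-\log v) + C_v(\theta-1-\log\theta) + \tfrac{Z(\theta)}{2\theta^2}q^2 + \tfrac{\tau_2}{2\mu\theta}S^2\right] dx,
$$
which, under the pointwise smallness \eqref{new-hu2-1}, is equivalent to the static integrand on the left of \eqref{0jgjx}: the convex potentials $R(v-1-\log v)$ and $C_v(\theta-1-\log\theta)$ are comparable to $(v-1)^2$ and $(\theta-1)^2$, while the relaxation weights $Z(\theta)/\theta^2$ and $\tau_2/(\mu\theta)$ are of order $\tau$.

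The implementation would multiply $\eqref{yuan}_2$ by $u$, $\eqref{yuan}_3$ by $(1-1/\theta)$, $\eqref{yuan}_4$ by $q/(\kappa(\theta)\theta^2)$, and $\eqref{yuan}_5$ by $S/(\mu\theta)$, add the time derivative of $R(v-1-\log v)$ computed via $\eqref{yuan}_1$, and integrate over $[0,1]$. All boundary terms vanish by $u|_{\partial\Omega}=q|_{\partial\Omega}=0$. The weights are chosen precisely so that the linear cross terms cancel exactly: the four pressure contributions $+\int R\theta u_x/v$ (from momentum), $+\int Ru_x/v - \int R\theta u_x/v$ (from the temperature equation), and $-\int Ru_x/v$ (from the $v$-potential) sum to zero; the heat-flux cross terms arising from $q_x(1-1/\theta)$ in $\eqref{yuan}_3$ (after integrating by parts, using $q|_{\partial\Omega}=0$) and from $q\kappa\theta_x/(\kappa\theta^2)=q\theta_x/\theta^2$ in $\eqref{yuan}_4$ cancel; and the viscous cross terms $-\int u_x S$ and $+\int u_x S/\theta$ combine into the cubic error $-\int u_x S(\theta-1)/\theta\, dx$. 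After these cancellations the identity reads $\frac{d}{dt}\mathcal{F} + \int vq^2/(\kappa\theta^2)\,dx + \int vS^2/(\mu\theta)\,dx = \mathcal{R}(t)$, where the left-hand side dissipation is coercive over $\int(q^2+S^2)\,dx$ by \eqref{new-hu2-1}, and $\mathcal{R}(t)$ collects cubic or higher-order errors. Under the smallness $E(t)\le\delta$, $\mathcal{R}(t)$ is dominated by $\delta\mathcal{F} + \delta\int(q^2+S^2)\,dx$ and can be absorbed. Time integration from $0$ to $t$ then yields \eqref{0jgjx}.

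The main technical obstacle is the careful accounting of $\mathcal{R}(t)$, which collects contributions from the source $\frac{2a(\theta)v}{\tau(\theta)}q^2$ on the right of $\eqref{yuan}_3$, the cross term $\frac{2a(\theta)}{Z(\theta)}q\theta_x$ on the left, the correction $a'(\theta)q^2\theta_t$ hidden inside $e_\theta = C_v + a'(\theta)q^2$, and the time derivatives of the weights $Z(\theta)/(2\theta^2)$ and $\tau_2/(2\mu\theta)$ acting on $q^2$ and $S^2$. The structural relation $a(\theta) = Z(\theta)/\theta - Z'(\theta)/2$ together with the sign condition \eqref{new-hu-2} is what guarantees that the formally borderline $q^2$-contributions (which are $O(1)$ rather than $O(\tau)$) combine with a favorable sign and remain dominated by the $\int q^2$-dissipation; the remaining cubic terms in $(v-1,u,\theta-1,q,S)$ are then handled via H\"older's inequality together with the pointwise bounds \eqref{new-hu4-3}--\eqref{new-hu4-4}.
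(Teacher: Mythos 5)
Your overall strategy---an entropy-type Lyapunov functional built from the kinetic energy, the convex potentials $R(v-1-\log v)$ and $C_v(\theta-1-\log\theta)$, and $O(\tau)$-weighted quadratic terms in $q$ and $S$---is exactly the paper's approach: the paper derives the same identity by subtracting the entropy production law (the temperature equation divided by $\theta$, with $\tfrac{1}{\theta}(a(\theta)q^2)_t$ converted, via the identity $\bigl(\tfrac{Z(\theta)}{2\theta^2}\bigr)_t=-\tfrac{a(\theta)}{\theta^2}\theta_t$ and the $q$-equation, into $\bigl(-\bigl(\tfrac{Z(\theta)}{2\theta}\bigr)'q^2\bigr)_t-\tfrac{vq^2}{\kappa(\theta)\theta^2}-\tfrac{q\theta_x}{\theta^2}$) from the total energy conservation law, which is algebraically the same bookkeeping as your multiplier scheme.

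There are, however, two concrete gaps. First, with your choice of weights the identity is not exact, and your residual $\mathcal R(t)$ contains the cross terms $\int_0^1 u_xS\,\tfrac{\theta-1}{\theta}\,\dif x$ and $\int_0^1\tfrac{a(\theta)}{Z(\theta)}q\theta_x(\theta-1)\,\dif x$ (plus the term $\tfrac{\tau\theta_t}{2\mu\theta^2}S^2$ coming from the extra $1/\theta$ in your $S^2$-weight). These are \emph{not} dominated by $\delta\mathcal F+\delta\int(q^2+S^2)\dif x$ as you claim: they involve $\|u_x\|_{L^2}$ and $\|\theta_x\|_{L^2}$, which belong to $\mathcal D(t)$ but not to the dissipation produced at this order, so as written you would only reach the weaker bound $C\bigl(E_0+E^{1/2}\int_0^t\mathcal D\bigr)$, not the stated $CE_0$. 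To recover the clean bound you must first eliminate $u_x$ and $\theta_x$ through the constitutive relations ($\mu u_x=vS+\tau S_t$ and $\kappa(\theta)\theta_x=-vq-\tau_1(\theta)q_t$) and then integrate the resulting $\tau SS_t(\theta-1)$ and $\tau qq_t(\theta-1)$ contributions by parts in time---or, more simply, use the paper's weights $\tfrac{\tau}{2\mu}S^2$ and $\bigl(a(\theta)+\tfrac12\bigl(\tfrac{Z(\theta)}{\theta}\bigr)'\bigr)q^2$, for which the combination closes into an exact divergence identity with zero residual and no smallness needed. Second, Lemma \ref{0j} is an a priori estimate for the \emph{approximate} system \eqref{approximate}, so the regularization $\tau\epsilon b(x)S_x$ must be carried along: it contributes $\bigl(\tfrac{\tau\epsilon}{2\mu}b(x)S^2\bigr)_x-\tfrac{\tau\epsilon}{\mu}S^2$, whose boundary part is nonnegative because $b(0)=-1$, $b(1)=1$, and whose interior part is absorbed by the $S^2$-dissipation for $\epsilon\le\tfrac14$; your proof never addresses it.
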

\begin{proof}
From $\eqref{approximate}_3$, we get $$e_t+pu_x+q_x=\frac{v}{\mu}S^2.$$
Dividing the above equation by $\theta$, and using formula \eqref{1.4}, one has
$$\frac{1}{\theta}(C_v\theta+a(\theta)q^2)_t+\frac{R}{v}u_x+\frac{q_x}{\theta}=\frac{v}{\mu\theta}S^2.$$
First, we have
\begin{align*}
\frac{1}{\theta}C_v\theta_t=C_v(\ln\theta)_t,
\end{align*}
and
\begin{align*}
\frac{1}{\theta}(a(\theta)q^2)_t
&=(\frac{a(\theta)q^2}{\theta})_t+\frac{a(\theta)q^2}{\theta^2}\theta_t
=(\frac{a(\theta)q^2}{\theta})_t-(\frac{Z(\theta)}{2\theta^2}q^2)_t+\frac{Z(\theta)}{\theta^2}qq_t,
\end{align*}
where we have used the identity 
$$
(\frac{Z(\theta)}{2\theta^2})_t=-\frac{a(\theta)}{\theta^2}\theta_t.
$$
Notice that $\frac{a(\theta)}{\theta}-\frac{Z(\theta)}{2\theta^2}=-\left(\frac{Z(\theta)}{2\theta}\right)^\prime,$ and use the equation $\eqref{approximate}_4$, we have
\begin{align*}
\frac{1}{\theta}(a(\theta)q^2)_t
&=\left(-\left(\frac{Z(\theta)}{2\theta}\right)^\prime q^2\right)_t+\frac{Z(\theta)}{\theta^2}qq_t\\
&=\left(-\left(\frac{Z(\theta)}{2\theta}\right)^\prime q^2\right)_t+\frac{Z(\theta)}{\theta^2\tau   (\theta)}q(-vq-\kappa(\theta)\theta_x)\\
&=\left(-\left(\frac{Z(\theta)}{2\theta}\right)^\prime q^2\right)_t-\frac{vq^2}{\theta^2\kappa(\theta)}-\frac{\theta_x}{\theta^2}q,
\end{align*}
and
\begin{align*}
\frac{R}{v}u_x=\frac{R}{v}v_t=R(\ln v)_t.
\end{align*}
Combining the above estimates, we have
$$C_v(\ln \theta)_t+R(\ln v)_t+\left(-\left(\frac{Z(\theta)}{2\theta}\right)^\prime q^2\right)_t+\left(\frac{q}{\theta}\right)_x=\frac{v}{\theta^2\kappa(\theta)}q^2+\frac{v}{\mu\theta}S^2.$$
Combining the above equation with $\eqref{approximate}$, we derive that
\begin{align*}
\left[C_v(\theta-\ln\theta-1)+R(v-\ln v-1)+\left(a(\theta)+\frac{1}{2}\left(\frac{Z(\theta)}{\theta}\right)^\prime\right)q^2+\frac{\tau  }{2\mu}S^2+\frac{1}{2}u^2\right]_t\\
+\left[-Ru+pu+q-Su-\frac{q}{\theta}\right]_x+\frac{\tau  \epsilon}{\mu}b(x)SS_x+\frac{v}{\theta^2\kappa(\theta)}q^2+\frac{v}{\mu\theta}S^2=0.
\end{align*}
Notice that
\begin{align*}
\frac{\tau  \epsilon}{\mu}b(x)SS_x=\left(\frac{\tau  \epsilon}{2\mu}b(x)S^2\right)_x-\frac{\tau  \epsilon}{\mu}S^2,
\end{align*}
thus, choosing $\epsilon$ small such that $\epsilon\le \frac{1}{4}$, we have
\begin{align}\label{0jyuan}
\left[C_v(\theta-\ln\theta-1)+R(v-\ln v-1)+\left(a(\theta)+\frac{1}{2}\left(\frac{Z(\theta)}{\theta}\right)^\prime\right)q^2+\frac{\tau  }{2\mu}S^2+\frac{1}{2}u^2\right]_t\nonumber\\
+\left[-Ru+pu+q-Su-\frac{q}{\theta}+\frac{\tau  \epsilon}{2\mu}b(x)S^2\right]_x+\frac{vq^2}{\theta^2\kappa(\theta)}+\frac{1}{10\mu}S^2
\le0.
\end{align}
Moreover, using Taylor expansions, we get
$$\theta-\ln \theta-1=\frac{1}{2\zeta^2}(\theta-1)^2,$$
$$v-\ln v-1=\frac{1}{2\iota^2}(v-1)^2,$$
where $\zeta\in (1,\theta),\iota\in (1,v)$.

Therefore, integrating $\eqref{0jyuan}$ with respect to $t$ and $x$, using the boundary condition \eqref{aboundary}, the inequalities \eqref{new-hu2-1} and \eqref{new-hu-2}, and noting  the fact 
$$
\int_{0}^{t}\frac{\tau  \epsilon}{2\mu}b(x)S^2\Big|_0^1\dif t\ge0,
$$
we get the $L^2$ estimates \eqref{0jgjx} immediately.

\end{proof}

Before we do higher-order estimates which require the independence of the parameters $\tau  $, the following facts should be noted. That is, the quantities 
\begin{align*}
&\frac{a(\theta)}{Z(\theta)}=\frac{1}{\theta}-\frac{Z^\prime(\theta)}{2Z(\theta)}=\frac{1}{\theta}-\frac{h^\prime(\theta)}{2h(\theta)},\\
&\frac{a^\prime(\theta)}{Z(\theta)}=-\frac{1}{\theta^2}+\frac{Z^\prime(\theta)}{\theta Z(\theta)}-\frac{Z^{\prime\prime}(\theta)}{2Z(\theta)}
=-\frac{1}{\theta^2}+\frac{h^\prime(\theta)}{\theta h(\theta)}-\frac{h^{\prime\prime}(\theta)}{2h(\theta)},\\
&\frac{a^{\prime\prime}(\theta)}{Z(\theta)}
=\frac{2}{\theta^3}-\frac{2Z^\prime(\theta)}{\theta^2 Z(\theta)}+\frac{Z^{\prime\prime}(\theta)}{\theta Z(\theta)}-\frac{Z^{\prime\prime\prime}(\theta)}{2 Z(\theta)}
=\frac{2}{\theta^3}-\frac{2h^\prime(\theta)}{\theta^2 h(\theta)}+\frac{h^{\prime\prime}(\theta)}{\theta h(\theta)}-\frac{h^{\prime\prime\prime}(\theta)}{2 h(\theta)}
\end{align*}
are  independent of $\tau   $, where  $Z(\theta)=\frac{\tau    g(\theta)}{\kappa(\theta)}=\tau   h(\theta)$ and  $h(\theta)$ is independent of $\tau   $.

Now, we do the first-order estimates.
\begin{lemma}\label{yijiex}
There exists some constant $C$ such that
\begin{align}
\int_{0}^{1}(u^2_x+v^2_x+ \theta^2_x+\tau   q^2_x+\tau  S^2_x)\dif x+\int_{0}^{t}\int_{0}^{1}(q^2_x+S^2_x)\dif x\dif t
\le C\left(E_0+E^\frac{1}{2}(t)\int_{0}^{t}\mathcal{D}(s)\dif s\right).
\label{yjgjx}
\end{align}
\end{lemma}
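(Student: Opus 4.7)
The approach is an $H^1$-type energy estimate on the symmetric form $A^0 U_t + A^1 U_x + B U = F^\epsilon$ of \eqref{approximate}, using the same diagonal symmetrizer $A^0 = \mathrm{diag}(R\theta/v^2, 1, e_\theta/\theta, Z(\theta)/\theta, \tau/\mu)$ that was implicit in Lemma~\ref{0j}. Setting $V := (v_x,u_x,\theta_x,q_x,S_x)^T$, differentiating the symmetric system once in $x$, multiplying by $V^T$, and integrating over $[0,1]$ should produce
\begin{align*}
\frac{1}{2}\frac{\dif}{\dif t}\int_0^1 V^T A^0 V\,\dif x + \int_0^1 V^T B V\,\dif x = -\frac{1}{2}\bigl[V^T A^1 V\bigr]_0^1 + \mathcal R,
\end{align*}
where $\mathcal R$ collects the interior commutator terms $\tfrac12 V^T(A^0)_t V$, $-\tfrac12 V^T(A^1)_x V$, $V^T F^\epsilon_x$, $-V^T (A^0)_x U_t$ and $-V^T B_x U$. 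Under \eqref{new-hu2-1}--\eqref{new-hu4-3}, the first integral on the left is comparable to $\int(v_x^2 + u_x^2 + \theta_x^2 + \tau q_x^2 + \tau S_x^2)\,\dif x$ (using $Z(\theta)/\theta \gtrsim \tau$) and the second to $\int(q_x^2 + S_x^2)\,\dif x$ (no $\tau$ factor, since $\kappa(\theta)$ stays $O(1)$), which matches the two left-hand contributions in \eqref{yjgjx}.

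Each term in $\mathcal R$ is at least trilinear: a quadratic expression in $V$ multiplied by a derivative of a smooth coefficient, or $V$ paired against $\partial_x$ of a quadratic in the small quantities $q, S$. Using the one-dimensional embedding $H^1 \hookrightarrow L^\infty$ together with the $L^\infty$ bounds \eqref{new-hu4-4} on $q, S$, each such product is controlled by $C E(t)^{1/2} \mathcal D(t)$ pointwise in time; integrating in $t$ produces precisely the $CE^{1/2}(t)\int_0^t \mathcal D(s)\,\dif s$ term on the right-hand side of \eqref{yjgjx}, while the contribution at $t=0$ is bounded by $CE_0$.

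The genuine obstacle is the boundary quadratic form $\tfrac12[V^T A^1 V]_0^1$, which does not vanish a priori because $v_x, u_x, q_x, S_x$ are not prescribed on $\partial\Omega$. The key observation is that the boundary conditions $u|_{\partial\Omega} = q|_{\partial\Omega} = 0$, together with their consequences $u_t|_{\partial\Omega} = q_t|_{\partial\Omega} = 0$, when substituted into \eqref{approximate}$_4$ and \eqref{approximate}$_2$ restricted to $\partial\Omega$, yield the derived boundary relations
\begin{align*}
\theta_x\big|_{\partial\Omega} = 0, \qquad S_x\big|_{\partial\Omega} = p_x\big|_{\partial\Omega} = -\tfrac{R\theta}{v^2}v_x\big|_{\partial\Omega}.
\end{align*}
Feeding these, together with $q|_{\partial\Omega} = 0$, into the expansion of $V^T A^1 V$ at $\partial\Omega$, the dangerous pairings $-2(R\theta/v^2) v_x u_x$ and $-2 u_x S_x$ cancel exactly, leaving $V^T A^1 V\big|_{\partial\Omega} = \frac{\tau \epsilon b(x)}{\mu}(R\theta/v^2)^2 v_x^2$. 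Since $b(0) = -1$ and $b(1) = 1$, this gives $[V^T A^1 V]_0^1 \ge 0$, so $-\tfrac12[V^T A^1 V]_0^1 \le 0$ has a favourable sign and can be dropped. Putting the three ingredients together and integrating in time from $0$ to $t$ yields \eqref{yjgjx} with a constant $C$ independent of $\tau$ and $\epsilon$; the decisive step is the boundary cancellation, which relies crucially on the derived Neumann-type condition $\theta_x|_{\partial\Omega} = 0$ read off from the Cattaneo relaxation law \eqref{approximate}$_4$ and has no counterpart in the classical Navier-Stokes-Fourier system \eqref{classical}.
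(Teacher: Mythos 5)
Your proposal is correct and is essentially the paper's own argument in vectorized form: the paper performs the same symmetrizer-weighted first-order energy estimate by multiplying the $x$-differentiated equations by $\theta u_x$, $\theta_x$, $q_x/\kappa(\theta)$ and $\theta S_x/\mu$ respectively, and it disposes of the boundary terms using exactly your derived relations $(p_x-S_x)\big|_{\partial\Omega}=0$ (from $u_t\big|_{\partial\Omega}=0$) and $(vq+\kappa(\theta)\theta_x)\big|_{\partial\Omega}=0$, hence $\theta_x\big|_{\partial\Omega}=0$ (from $q\big|_{\partial\Omega}=q_t\big|_{\partial\Omega}=0$), the only difference being that the paper groups $p_x-S_x$ before integrating by parts rather than exhibiting the cancellation inside $V^TA^1V$ as you do. The one (harmless) imprecision is your claim that every remainder term is trilinear: $(A^1)_x$ contains the non-small entry $\tfrac{2\tau\epsilon}{\mu}$ coming from $b'(x)$, which produces a purely quadratic term proportional to $\tfrac{\tau\epsilon}{\mu}\int_0^1 S_x^2\,\mathrm{d}x$; this term, however, enters with a favourable sign and is in any case $O(\epsilon)$ relative to the dissipation $\int_0^1\tfrac{v}{\mu}S_x^2\,\mathrm{d}x$, exactly as in the paper's treatment of the $\epsilon b(x)S_x$ contribution.
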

	
\begin{proof}
Taking derivatives with respect to $x$ to the equations \eqref{approximate}, we get
\begin{align}\label{x}
\begin{cases}
v_{tx}=u_{xx},\\
u_{tx}+{p(v,\theta)}_{xx}=S_{xx},\\
(e_{\theta}\theta_t)_x-(\frac{2a(\theta)}{Z(\theta)}q\theta_x)_x+(\frac{R\theta u_x}{v})_x+q_{xx}=(\frac{2a(\theta)}{\tau   (\theta)}q^2v)_x+(\frac{S^2v}{\mu})_x,\\
\left(\tau   (\theta)q_t\right)_x+(vq)_x+(\kappa(\theta)\theta_x)_x=0,\\
\tau  S_{tx}+\tau  \epsilon(b(x)S_x)_x+(vS)_x=\mu u_{xx}.
\end{cases}
\end{align} 

Multiplying the equation $\eqref{x}_2$ by $\theta u_x$ and integrating over $[0,1]$ with respect to $x$, we get
$$\int_{0}^{1} \theta u_xu_{tx}\dif x+\int_{0}^{1} \theta \left(p(v,\theta)_x-S_x\right)_xu_x\dif x=0,$$
where
\begin{align*}
\int_{0}^{1}\theta u_xu_{tx}\dif x=\frac{1}{2}\frac{\dif}{\dif t}\int_{0}^{1} \theta u^2_x\dif x-\frac{1}{2}\int_{0}^{1} \theta_t u^2_x\dif x \ge \frac{1}{2}\frac{\dif}{\dif t}\int_{0}^{1} \theta u^2_x\dif x-CE^{\frac{1}{2}}(t)\mathcal{D}(t).
\end{align*}
Note that $u_t(t,0)=u_t(t,1)=0$, by the momentum equation, we get 
$$
\left(p(v,\theta)_x-S_x\right)|_{\partial \Omega}=0.
$$
Therefore, we have
\begin{align*}
&\int_{0}^{1} \theta \left(p(v,\theta)_x-S_x\right)_xu_x\dif x\\
&=\theta \left(p(v,\theta)_x-S_x\right)u_x\big|_{0}^{1}-\int_{0}^{1} \theta \left(p(v,\theta)_x-S_x\right)u_{xx}\dif x-\int_{0}^{1}\theta_x\left(p_vv_x+p_\theta\theta_x-S_x\right)u_x\dif x\\
&\ge-\int_{0}^{1} \theta p(v,\theta)_xu_{xx}\dif x+\int_{0}^{1} \theta S_xu_{xx}\dif x-CE^{\frac{1}{2}}(t)\mathcal{D}(t)\\
&=-\int_{0}^{1} \theta \left(\frac{R\theta_x}{v}-\frac{R\theta}{v^2}v_x\right) u_{xx}\dif x+\int_{0}^{1} \theta S_xu_{xx}\dif x-CE^{\frac{1}{2}}(t)\mathcal{D}(t)\\
&=-\int_{0}^{1}\frac{R\theta}{v}\theta_xu_{xx}\dif x+\int_{0}^{1}\frac{R\theta^2}{v^2}v_xv_{tx}\dif x+\int_{0}^{1}\theta S_xu_{xx}\dif x-CE^{\frac{1}{2}}(t)\mathcal{D}(t)\\
&=-\int_{0}^{1}\frac{R\theta}{v}\theta_xu_{xx}\dif x+\int_{0}^{1}\theta S_xu_{xx}\dif x+\frac{1}{2}\frac{d}{\dif t}\int_{0}^{1} \frac{R\theta^2}{v^2}v^2_x\dif x-\frac{1}{2}\int_{0}^{1}\left(\frac{R\theta^2}{v^2}\right)_tv^2_x\dif x-CE^{\frac{1}{2}}(t)\mathcal{D}(t)\\
&\ge-\int_{0}^{1}\frac{R\theta}{v}\theta_xu_{xx}\dif x+\int_{0}^{1}\theta S_xu_{xx}\dif x+\frac{1}{2}\frac{\dif}{\dif t}\int_{0}^{1} \frac{R\theta^2}{v^2}v^2_x\dif x-CE^{\frac{1}{2}}(t)\mathcal{D}(t).
\end{align*}
Combining the above estimates, we get
\begin{align}\label{u_{x1}}
\frac{1}{2}\frac{\dif}{\dif t}\int_{0}^{1} (\theta u^2_x+\frac{R\theta^2}{v^2}v^2_x)\dif x-\int_{0}^{1}\frac{R\theta}{v}\theta_xu_{xx}\dif x+\int_{0}^{1}\theta S_xu_{xx}\dif x\le CE^{\frac{1}{2}}(t)\mathcal{D}(t).
\end{align}
Multiplying the equation $\eqref{x}_3$ by $\theta_x$ and integrating over $[0,1]$ with respect to $x$, we get
\begin{align}
&\int_{0}^{1}(e_{\theta}\theta_t)_x\theta_x\dif x-\int_{0}^{1}\left(\frac{2a(\theta)}{Z(\theta)}q\theta_x\right)_x\theta_x\dif x+\int_{0}^{1}\left(\frac{R\theta u_x}{v}\right)_x\theta_x\dif x+\int_{0}^{1}q_{xx}\theta_x\dif x \nonumber\\
&=\int_{0}^{1}\left(\frac{2a(\theta)}{\tau   (\theta)}q^2v\right)_x\theta_x\dif x+\int_{0}^{1}\left(\frac{S^2v}{\mu}\right)_x\theta_x\dif x. \label{new-hu2-2}
\end{align}
We estimate each term of the above equation separately. For the left-hand of the above equation, we have
\begin{align*}
&\int_{0}^{1}(e_{\theta}\theta_t)_x\theta_x\dif x\\
&=\int_{0}^{1}e_{\theta}\theta_{tx}\theta_x\dif x+\int_{0}^{1}\left(e_{\theta \theta}\theta_x+e_{\theta q}q_x\right)\theta_t\theta_x\dif x\\
&=\frac{1}{2}\frac{d}{\dif t}\int_{0}^{1}e_\theta \theta^2_x\dif x-\frac{1}{2}\int_{0}^{1}\left(e_{\theta \theta}\theta_t+e_{\theta q}q_t\right)\theta^2_x\dif x+\int_{0}^{1}\left(a^{\prime \prime}(\theta)q^2\theta_x+2a^\prime(\theta)qq_x\right)\theta_t\theta_x\dif x\\
&\ge\frac{1}{2}\frac{d}{\dif t}\int_{0}^{1}e_\theta \theta^2_x\dif x-CE^{\frac{1}{2}}(t)\mathcal{D}(t),
\end{align*}
and
\begin{align*}
-\int_{0}^{1}\left(\frac{2a(\theta)}{Z(\theta)}q\theta_x\right)_x\theta_x\dif x&=-\int_{0}^{1}\frac{2a(\theta)}{Z(\theta)}q\theta_{xx}\theta_x\dif x-\int_{0}^{1}\left(\frac{2a(\theta)}{Z(\theta)}q\right)_x\theta^2_x\dif x\\
&=-\frac{a(\theta)}{Z(\theta)}q\theta^2_x\big|_{0}^{1}-\int_{0}^{1}\left(\frac{a(\theta)}{Z(\theta)}q\right)_x\theta^2_x\dif x\\
&=-\int_{0}^{1}\left(\frac{a(\theta)}{Z(\theta)}q_x+\frac{a^\prime(\theta)}{Z(\theta)}\theta_xq-\frac{Z^\prime(\theta)a(\theta)}{Z^2(\theta)}\theta_xq\right)\theta^2_x\dif x\\
&\ge-CE^{\frac{1}{2}}(t)\mathcal{D}(t).
\end{align*}
Similarly, one has
\begin{align*}
\int_{0}^{1}\left(\frac{R\theta u_x}{v}\right)_x\theta_x\dif x&=\int_{0}^{1}\frac{R\theta}{v}u_{xx}\theta_x\dif x+\int_{0}^{1}\left(\frac{R\theta}{v}\right)_xu_x\theta_x\dif x\\
&=\int_{0}^{1}\frac{R\theta}{v}u_{xx}\theta_x\dif x+\int_{0}^{1}\left(\frac{R\theta_x}{v}-\frac{R\theta v_x}{v^2}\right)u_x\theta_x\dif x\\
&\ge\int_{0}^{1}\frac{R\theta}{v}u_{xx}\theta_x\dif x-CE^{\frac{1}{2}}(t)\mathcal{D}(t).
\end{align*}

For the right-hand side of the equation \eqref{new-hu2-2}, using \eqref{new-hu4-4}, it follows that
\begin{align*}
&\int_{0}^{1}\left(\frac{2a(\theta)}{\tau   (\theta)}q^2v+\frac{S^2v}{\mu}\right)_x\theta_x\dif x\\
&=\int_{0}^{1}\left\{\frac{4a(\theta)}{\tau   (\theta)}qq_xv+\frac{2a(\theta)}{\tau   (\theta)}q^2v_x+\frac{2a^\prime(\theta)}{\tau   (\theta)}\theta_xq^2v-\frac{\tau   ^\prime(\theta)2a(\theta)}{\tau   ^2(\theta)}\theta_xq^2v+\frac{2SS_xv}{\mu}+\frac{S^2v_x}{\mu}\right\}\theta_x\dif x\\
&\le CE^{\frac{1}{2}}(t)\mathcal{D}(t).
\end{align*}	
Therefore, we get
\begin{align}\label{u_{x2}}
\frac{1}{2}\frac{d}{\dif t}\int_{0}^{1}e_\theta \theta^2_x\dif x+\int_{0}^{1}\frac{R\theta}{v}u_{xx}\theta_x\dif x+\int_{0}^{1}q_{xx}\theta_x\dif x\le CE^{\frac{1}{2}}(t)\mathcal{D}(t).
\end{align}
Multiplying the equation $\eqref{x}_4$ by $\frac{q_x}{\kappa(\theta)}$ and integrating over $[0,1]$ with respect to $x$, we get
$$\int_{0}^{1}\frac{1}{\kappa(\theta)}\left(\tau   (\theta)q_t\right)_xq_x\dif x+\int_{0}^{1}\frac{1}{\kappa(\theta)}\left(vq+\kappa(\theta)\theta_x\right)_xq_x\dif x=0.$$
For the first term of the above equation, one has
\begin{align*}
&\int_{0}^{1}\frac{1}{\kappa(\theta)}\left(\tau   (\theta)q_t\right)_xq_x\dif x\\
&=\int_{0}^{1}Z(\theta)q_{tx}q_x\dif x+\int_{0}^{1}\frac{\tau^\prime (\theta)}{\kappa(\theta)}\theta_xq_tq_x\dif x\\
&\ge\frac{1}{2}\frac{d}{\dif t}\int_{0}^{1}Z(\theta)q^2_x\dif x-\frac{1}{2}\int_{0}^{1}Z^\prime(\theta)\theta_tq^2_x\dif x-CE^{\frac{1}{2}}(t)\mathcal{D}(t)\\
&\ge\frac{1}{2}\frac{d}{\dif t}\int_{0}^{1}Z(\theta)q^2_x\dif x-CE^{\frac{1}{2}}(t)\mathcal{D}(t).
\end{align*}
Note that $q_t(t,0)=q_t(t,1)=0$, we get 
$$vq+\kappa(\theta)\theta_x\Big|_{\partial \Omega}=0.
$$ 
Thus, it yields
\begin{align*}
&\int_{0}^{1}\frac{1}{\kappa(\theta)}\left(vq+\kappa(\theta)\theta_x\right)_xq_x\dif x\\
&=\frac{1}{\kappa(\theta)}(vq+\kappa(\theta)\theta_x)q_x\big|_{0}^{1}-\int_{0}^{1}\frac{1}{\kappa(\theta)}\left(vq+\kappa(\theta)\theta_x\right)q_{xx}\dif x+\int_{0}^{1}\frac{\kappa^\prime(\theta)}{\kappa^2(\theta)}\left(vq+\kappa(\theta)\theta_x\right)\theta_xq_x\dif x\\
&\ge-\int_{0}^{1}\frac{1}{\kappa(\theta)}vqq_{xx}\dif x-\int_{0}^{1}\theta_xq_{xx}\dif x-CE^{\frac{1}{2}}(t)\mathcal{D}(t)\\
&=-\frac{1}{\kappa(\theta)}vqq_x\big|_{0}^{1}+\int_{0}^{1}\frac{1}{\kappa(\theta)}vq^2_x\dif x+\int_{0}^{1}\left(\frac{1}{\kappa(\theta)}v_x-\frac{\kappa^\prime(\theta)}{\kappa^2(\theta)}\theta_xv\right)qq_x\dif x-\int_{0}^{1}\theta_xq_{xx}\dif x-CE^{\frac{1}{2}}(t)\mathcal{D}(t)\\
&\ge\int_{0}^{1}\frac{1}{\kappa(\theta)}vq^2_x\dif x-\int_{0}^{1}\theta_xq_{xx}\dif x-CE^{\frac{1}{2}}(t)\mathcal{D}(t).
\end{align*}
Therefore, we derive that
\begin{align}\label{u_{x3}}
\frac{1}{2}\frac{d}{\dif t}\int_{0}^{1}Z(\theta)q^2_x\dif x+\int_{0}^{1}\frac{1}{\kappa(\theta)}vq^2_x\dif x-\int_{0}^{1}\theta_xq_{xx}\dif x\le CE^{\frac{1}{2}}(t)\mathcal{D}(t).
\end{align}
Multiplying the equation $\eqref{x}_5$ by $\frac{\theta}{\mu}S_x$ and integrating over $[0,1]$ with respect to $x$, we get
$$\int_{0}^{1}\frac{\tau  \theta}{\mu}S_{tx}S_x\dif x+\int_{0}^{1}\frac{\theta}{\mu}(vS)_xS_x\dif x-\int_{0}^{1}\theta u_{xx}S_x\dif x+\frac{\tau  \epsilon}{\mu}\int_{0}^{1}(b(x)S_x)_x\theta S_x\dif x
=0.$$
Note that
\begin{align*}
\int_{0}^{1}\frac{\tau  \theta}{\mu}S_{tx}S_x\dif x&=\frac{1}{2}\frac{\dif}{\dif t}\int_{0}^{1}\frac{\tau  }{\mu}\theta S^2_x\dif x-\frac{1}{2}\int_{0}^{1}\frac{\tau  }{\mu}\theta_tS^2_x\dif x\\
&\ge\frac{1}{2}\frac{\dif}{\dif t}\int_{0}^{1}\frac{\tau  }{\mu}\theta S^2_x\dif x-CE^{\frac{1}{2}}(t)\mathcal{D}(t),
\end{align*}
and
\begin{align*}
\int_{0}^{1}\frac{\theta}{\mu}(vS)_xS_x\dif x
=\int_{0}^{1}\frac{\theta}{\mu}(v_xS+vS_x)S_x\dif x
\ge \frac{1}{2\mu}\int_{0}^{1}S^2_x\dif x-CE^{\frac{1}{2}}(t)\mathcal{D}(t).
\end{align*}
Similarly, we have
\begin{align*}
\frac{\tau  \epsilon}{\mu}\int_{0}^{1}(b(x)S_x)_x\theta S_x\dif x
&=\frac{\tau  \epsilon}{\mu}\int_{0}^{1}(2S_x+b(x)S_{xx})\theta S_x\dif x\\
&=\frac{2\tau  \epsilon}{\mu}\int_{0}^{1}\theta S^2_x\dif x+\frac{\tau  \epsilon}{2\mu}b(x)\theta S^2_x|_{0}^{1}-\frac{\tau  \epsilon}{2\mu}\int_{0}^{1}(2\theta+b(x)\theta_x)S^2_x\dif x\\
&\ge \frac{\tau  \epsilon}{\mu}\int_{0}^{1}\theta S^2_x\dif-CE^{\frac{1}{2}}(t)\mathcal{D}(t).
\end{align*}
Therefore, we get
\begin{align}\label{u_{x4}}
\frac{1}{2}\frac{\dif}{\dif t}\int_{0}^{1}\frac{\tau  }{\mu}\theta S^2_x\dif x+\int_{0}^{1}\frac{1}{4\mu}S^2_x\dif x-\int_{0}^{1}\theta u_{xx}S_x\dif x\le CE^{\frac{1}{2}}(t)\mathcal{D}(t) .
\end{align}
		
Thus, combining the estimates \eqref{u_{x1}}-\eqref{u_{x4}}, we conclude that
\begin{align*}
\frac{\dif}{\dif t} \int_0^1  \left( \frac{\theta}{2} u_x^2+\frac{R\theta^2}{v^2} v_x^2+\frac{e_\theta}{2} \theta_x^2+\frac{Z(\theta)}{2} q_x^2 +\frac{\tau \theta}{2\mu}  S_x^2\right) \dif x 
+\int_0^1 \left(\frac{v}{\kappa (\theta)} q_x^2+\frac{1}{4\mu} S_x^2 \right)\dif x \le C E^\frac{1}{2} (t) \mathcal D (t).
\end{align*}
Integrating the above inequality,  and using the a priori assumption \eqref{new-hu2-1}-\eqref{new-hu4-3}, we get  \eqref{yjgjx} immediately.
\end{proof}

Using similar methods, we can get the first order estimates with respect to $t$ as follows.
\begin{lemma}\label{yijiet}
There exists some constant $C$ such that
\begin{align}\label{yjgjt}
\int_{0}^{1}(u^2_t+v^2_t+\theta^2_t+\tau   q^2_t+\tau  S^2_t)\dif x+\int_{0}^{t}\int_{0}^{1}(q^2_t+S^2_t)\dif x\dif t
\le C\left(E_0+E^\frac{1}{2}(t)\int_{0}^{t}\mathcal{D}(s)\dif s\right).
\end{align}
\begin{proof}
Taking derivatives with respect to $t$ to the equations \eqref{approximate}, one get
\begin{align}\label{t}
\begin{cases}
v_{tt}=u_{tx},\\
u_{tt}+{p(v,\theta)}_{tx}=S_{tx},\\
(e_{\theta}\theta_t)_t-(\frac{2a(\theta)}{Z(\theta)}q\theta_x)_t+(\frac{R\theta u_x}{v})_t+q_{tx}=(\frac{2a(\theta)}{\tau   (\theta)}q^2v)_t+(\frac{S^2v}{\mu})_t,\\
\left(\tau   (\theta)q_t\right)_t+(vq)_t+(\kappa(\theta)\theta_x)_t=0,\\
\tau  S_{tt}+(vS)_t+\tau  \epsilon b(x)S_{tx}=\mu u_{tx}.
\end{cases}
\end{align} 
Multiplying the equation $\eqref{t}_2$ by $\theta u_t$ and integrating over $[0,1]$ with respect to $x$, it follows
$$\int_{0}^{1} \theta u_tu_{tt}\dif x+\int_{0}^{1} \theta \left(p(v,\theta)-S\right)_{tx}u_t\dif x=0,$$
where
\begin{align*}
\int_{0}^{1}\theta u_tu_{tt}\dif x=\frac{1}{2}\frac{\dif}{\dif t}\int_{0}^{1} \theta u^2_t\dif x-\frac{1}{2}\int_{0}^{1} \theta_t u^2_t\dif x \ge \frac{1}{2}\frac{\dif}{\dif t}\int_{0}^{1} \theta u^2_t\dif x-CE^{\frac{1}{2}}(t)\mathcal{D}(t).
\end{align*}
In view of $u_t|_{\partial \Omega}=0$, it yields
\begin{align*}
&\int_{0}^{1} \theta \left(p(v,\theta)-S\right)_{tx}u_t\dif x\\
&=\theta \left(p(v,\theta)_t-S_t\right)u_t\big|_{0}^{1}-\int_{0}^{1} \theta \left(p(v,\theta)_t-S_t\right)u_{tx}\dif x-\int_{0}^{1}\theta_x\left(p_vv_t+p_\theta\theta_t-S_t\right)u_t\dif x\\
&\ge-\int_{0}^{1} \theta p(v,\theta)_tu_{tx}\dif x+\int_{0}^{1} \theta S_tu_{tx}\dif x-CE^{\frac{1}{2}}(t)\mathcal{D}(t)\\
&=-\int_{0}^{1} \theta \left(\frac{R\theta_t}{v}-\frac{R\theta}{v^2}v_t\right) u_{tx}\dif x+\int_{0}^{1} \theta S_tu_{tx}\dif x-CE^{\frac{1}{2}}(t)\mathcal{D}(t)\\
&=-\int_{0}^{1}\frac{R\theta}{v}\theta_tu_{tx}+\int_{0}^{1}\frac{R\theta^2}{v^2}v_tv_{tt}\dif x+\int_{0}^{1}\theta S_tu_{tx}\dif x-CE^{\frac{1}{2}}(t)\mathcal{D}(t)\\
&=-\int_{0}^{1}\frac{R\theta}{v}\theta_tu_{tx}\dif x+\int_{0}^{1}\theta S_tu_{tx}\dif x+\frac{1}{2}\frac{d}{\dif t}\int_{0}^{1} \frac{R\theta^2}{v^2}v^2_t\dif x-\frac{1}{2}\int_{0}^{1}\left(\frac{R\theta^2}{v^2}\right)_tv^2_t\dif x-CE^{\frac{1}{2}}(t)\mathcal{D}(t)\\
&\ge-\int_{0}^{1}\frac{R\theta}{v}\theta_tu_{tx}\dif x+\int_{0}^{1}\theta S_tu_{tx}\dif x+\frac{1}{2}\frac{\dif}{\dif t}\int_{0}^{1} \frac{R\theta^2}{v^2}v^2_t\dif x-CE^{\frac{1}{2}}(t)\mathcal{D}(t).
\end{align*}
Combining the above estimates, we get
\begin{align}\label{u_{t1}}
\frac{1}{2}\frac{d}{\dif t}\int_{0}^{1} (\theta u^2_t+\frac{R\theta^2}{v^2}v^2_t)\dif x-\int_{0}^{1}\frac{R\theta}{v}\theta_tu_{tx}\dif x+\int_{0}^{1}\theta S_tu_{tx}\dif x\le CE^{\frac{1}{2}}(t)\mathcal{D}(t).
\end{align}
Multiplying the equation $\eqref{t}_3$ by $\theta_t$ and integrating over $[0,1]$ with respect to $x$, it follows
\begin{align*}
&\int_{0}^{1}(e_{\theta}\theta_t)_t\theta_t\dif x-\int_{0}^{1}\left(\frac{2a(\theta)}{Z(\theta)}q\theta_x\right)_t\theta_t\dif x+\int_{0}^{1}\left(\frac{R\theta u_x}{v}\right)_t\theta_t\dif x+\int_{0}^{1}q_{tx}\theta_t\dif x\\
&=\int_{0}^{1}\left(\frac{2a(\theta)}{\tau   (\theta)}q^2v\right)_t\theta_t\dif x+\int_{0}^{1}\left(\frac{S^2v}{\mu}\right)_t\theta_t\dif x.
\end{align*}
We estimate each term of the above equation individually. For the left-hand of the above equation, one have
\begin{align*}
&\int_{0}^{1}(e_{\theta}\theta_t)_t\theta_t\dif x\\
&=\int_{0}^{1}e_{\theta}\theta_{tt}\theta_t\dif x+\int_{0}^{1}\left(e_{\theta \theta}\theta_t+e_{\theta q}q_t\right)\theta^2_t\dif x\\
&=\frac{1}{2}\frac{d}{\dif t}\int_{0}^{1}e_\theta \theta^2_t\dif x-\frac{1}{2}\int_{0}^{1}\left(a^{\prime \prime}(\theta)q^2\theta_t+2a^\prime(\theta)qq_t\right)\theta^2_t\dif x\\
&\ge\frac{1}{2}\frac{d}{\dif t}\int_{0}^{1}e_\theta \theta^2_t\dif x-CE^{\frac{1}{2}}(t)\mathcal{D}(t).
\end{align*}
Using the boundary condition $q|_{\partial \Omega}=0$, we get
\begin{align*}
&-\int_{0}^{1}\left(\frac{2a(\theta)}{Z(\theta)}q\theta_x\right)_t\theta_t\dif x\\
&=-\int_{0}^{1}\frac{2a(\theta)}{Z(\theta)}q\theta_{tx}\theta_t\dif x-\int_{0}^{1}\left(\frac{2a(\theta)}{Z(\theta)}q\right)_t\theta_x\theta_t\dif x\\
&=-\frac{a(\theta)}{Z(\theta)}q\theta^2_t\big|_{0}^{1}+\int_{0}^{1}\left(\frac{a(\theta)}{Z(\theta)}q\right)_x\theta^2_t\dif x-\int_{0}^{1}\left(\frac{2a(\theta)}{Z(\theta)}q_t+\frac{2a^\prime(\theta)}{Z(\theta)}\theta_tq-\frac{2Z^\prime(\theta)a(\theta)}{Z^2(\theta)}\theta_tq\right)\theta_x\theta_t\dif x\\
&\ge\int_{0}^{1}\left(\frac{a(\theta)}{Z(\theta)}q_x+\frac{a^\prime(\theta)}{Z(\theta)}\theta_xq-\frac{Z^\prime(\theta)a(\theta)}{Z^2(\theta)}\theta_xq\right)\theta^2_t\dif x-CE^{\frac{1}{2}}(t)\mathcal{D}(t)\\
&\ge-CE^{\frac{1}{2}}(t)\mathcal{D}(t).
\end{align*}
Similarly, one has
\begin{align*}
\int_{0}^{1}(\frac{R\theta u_x}{v})_t\theta_t\dif x&=\int_{0}^{1}\frac{R\theta}{v}u_{tx}\theta_t\dif x+\int_{0}^{1}\left(\frac{R\theta}{v}\right)_tu_x\theta_t\dif x\\
&=\int_{0}^{1}\frac{R\theta}{v}u_{tx}\theta_t\dif x+\int_{0}^{1}\left(\frac{R\theta_t}{v}-\frac{R\theta v_t}{v^2}\right)u_x\theta_t\dif x\\
&\ge\int_{0}^{1}\frac{R\theta}{v}u_{tx}\theta_t\dif x-CE^{\frac{1}{2}}(t)\mathcal{D}(t).
\end{align*}
Finally, we have
\begin{align*}
&\int_{0}^{1}\left(\frac{2a(\theta)}{\tau   (\theta)}q^2v+\frac{S^2v}{\mu}\right)_t\theta_t\dif x\\
&=\int_{0}^{1}\left\{\frac{4a(\theta)}{\tau   (\theta)}qq_tv+\frac{2a(\theta)}{\tau   (\theta)}q^2v_t+\frac{2a^\prime(\theta)}{\tau   (\theta)}\theta_tq^2v-\frac{\tau   ^\prime(\theta)2a(\theta)}{\tau   ^2(\theta)}\theta_tq^2v+\frac{2SS_tv}{\mu}+\frac{S^2v_t}{\mu}\right\}\theta_t\dif x\\
&\le CE^{\frac{1}{2}}(t)\mathcal{D}(t).
\end{align*}
Therefore, 
\begin{align}\label{u_{t2}}
\frac{1}{2}\frac{d}{\dif t}\int_{0}^{1}e_\theta \theta^2_t\dif x+\int_{0}^{1}\frac{R\theta}{v}u_{tx}\theta_t\dif x+\int_{0}^{1}q_{tx}\theta_t\dif x\le CE^{\frac{1}{2}}(t)\mathcal{D}(t).
\end{align}
Multiplying the equation $\eqref{t}_4$ by $\frac{q_t}{k(\theta)}$ and integrating over $[0,1]$ with respect to $x$, it follows
$$\int_{0}^{1}\frac{1}{k(\theta)}\left(\tau   (\theta)q_t\right)_tq_t\dif x+\int_{0}^{1}\frac{1}{k(\theta)}\left(vq+k(\theta)\theta_x\right)_tq_t\dif x=0.$$
Using similar methods as before, we have
\begin{align*}
&\int_{0}^{1}\frac{1}{\kappa(\theta)}\left(\tau   (\theta)q_t\right)_tq_t\dif x\\
&=\int_{0}^{1}Z(\theta)q_{tt}q_t\dif x+\int_{0}^{1}\frac{\tau^\prime(\theta)}{\kappa(\theta)}\theta_tq^2_t\dif x\\
&\ge\frac{1}{2}\frac{d}{\dif t}\int_{0}^{1}Z(\theta)q^2_t\dif x-\frac{1}{2}\int_{0}^{1}Z^\prime(\theta)\theta_tq^2_t\dif x-CE^{\frac{1}{2}}(t)\mathcal{D}(t)\\
&\ge\frac{1}{2}\frac{d}{\dif t}\int_{0}^{1}Z(\theta)q^2_t\dif x-CE^{\frac{1}{2}}(t)\mathcal{D}(t),
\end{align*}
and
\begin{align*}
\int_{0}^{1}\frac{1}{\kappa(\theta)}(vq)_tq_t\dif x
&=\int_{0}^{1}\frac{1}{\kappa(\theta)}\left(vq_t+v_tq\right)q_t\dif x
\ge\int_{0}^{1}\frac{1}{\kappa(\theta)}vq^2_t\dif x-CE^{\frac{1}{2}}(t)\mathcal{D}(t),
\end{align*}
and
\begin{align*}
\int_{0}^{1}\frac{1}{\kappa(\theta)}(\kappa(\theta)\theta_x)_tq_t\dif x
=\int_{0}^{1}\left(\frac{\kappa^\prime(\theta)}{\kappa(\theta)}\theta_t\theta_x+\theta_{tx}\right)q_t\dif x
&\ge\theta_tq_t\big|_{0}^{1}-\int_{0}^{1}\theta_tq_{tx}\dif x-CE^{\frac{1}{2}}(t)\mathcal{D}(t)\\
&=-\int_{0}^{1}\theta_tq_{tx}\dif x-CE^{\frac{1}{2}}(t)\mathcal{D}(t).
\end{align*}
Therefore,  it yields that
\begin{align}\label{u_{t3}}
\frac{1}{2}\frac{d}{\dif t}\int_{0}^{1}Z(\theta)q^2_t\dif x+\int_{0}^{1}\frac{1}{\kappa(\theta)}vq^2_t\dif x-\int_{0}^{1}\theta_tq_{tx}\dif x\le CE^{\frac{1}{2}}(t)\mathcal{D}(t).
\end{align}
Multiplying the equation $\eqref{t}_5$ by $\frac{\theta}{\mu}S_t$ and integrating over $[0,1]$ with respect to $x$, we get
$$\int_{0}^{1}\frac{\tau  \theta}{\mu}S_{tt}S_t\dif x+\int_{0}^{1}\frac{\theta}{\mu}(vS)_tS_t\dif x-\int_{0}^{1}\theta u_{tx}S_t\dif x+\int_{0}^{1}\frac{\tau  \epsilon}{\mu} b(x)S_{tx}\theta S_t\dif x
=0.$$
Note that
\begin{align*}
\int_{0}^{1}\frac{\tau  \theta}{\mu}S_{tt}S_t\dif x&=\frac{1}{2}\frac{\dif}{\dif t}\int_{0}^{1}\frac{\tau  }{\mu}\theta S^2_t\dif x-\frac{1}{2}\int_{0}^{1}\frac{\tau  }{\mu}\theta_tS^2_t\dif x\\
&\ge\frac{1}{2}\frac{\dif}{\dif t}\int_{0}^{1}\frac{\tau  }{\mu}\theta S^2_t\dif x-CE^{\frac{1}{2}}(t)\mathcal{D}(t),
\end{align*}
and
\begin{align*}
\int_{0}^{1}\frac{\theta}{\mu}(vS)_tS_t\dif x=\int_{0}^{1}\frac{\theta}{\mu}(v_tS+vS_t)S_t\dif x\ge\int_{0}^{1}\frac{1}{2\mu}S^2_t\dif x-CE^{\frac{1}{2}}(t)\mathcal{D}(t).
\end{align*}
Choosing $\epsilon$ sufficiently small, one can get
\begin{align*}
\int_{0}^{1}\frac{\tau  \epsilon}{\mu} b(x)S_{tx}\theta S_t\dif x
=\frac{\tau  \epsilon}{2\mu}b(x)\theta S^2_t|_0^{1}-\frac{\tau  \epsilon}{2\mu} \int_{0}^{1}(2\theta-b(x)\theta_x)S^2_t\dif x
\ge -\frac{1}{8\mu}\int_{0}^{1} S^2_t\dif x-CE^{\frac{1}{2}}(t)\mathcal{D}(t).
\end{align*}
Therefore, we get 
\begin{align}\label{u_{t4}}
\frac{1}{2}\frac{\dif}{\dif t}\int_{0}^{1}\frac{\tau  }{\mu}\theta S^2_t\dif x+\int_{0}^{1}\frac{1}{4\mu}S^2_t\dif x-\int_{0}^{1}\theta u_{tx}S_t\dif x\le CE^{\frac{1}{2}}(t)\mathcal{D}(t) .
\end{align}
Thus,  combining \eqref{u_{t1}}-\eqref{u_{t4}}, we derive that
\begin{align*}
\frac{\dif}{\dif t} \int_0^1  \left( \frac{\theta}{2} u_t^2+\frac{R\theta^2}{v^2} v_t^2+\frac{e_\theta}{2} \theta_t^2+\frac{Z(\theta)}{2} q_t^2 +\frac{\tau \theta}{2\mu}  S_t^2\right) \dif x 
+\int_0^1 \left(\frac{v}{\kappa (\theta)} q_t^2+\frac{1}{4\mu} S_t^2 \right)\dif x \le C E^\frac{1}{2} (t) \mathcal D (t).
\end{align*}
Integrating the above inequality over $(0, t)$, using \eqref{new-hu2-1}-\eqref{new-hu4-3} and noticing that 
\begin{align*}
\int_0^1  \left( \frac{\theta}{2} u_t^2+\frac{R\theta^2}{v^2} v_t^2+\frac{e_\theta}{2} \theta_t^2+\frac{Z(\theta)}{2} q_t^2 +\frac{\tau \theta}{2\mu}  S_t^2\right)  (t=0, x)\dif x \le C \|V_0^1\|_{L^2}^2 \le CE_0,
\end{align*}
  \eqref{yjgjt} follows immediately.
\end{proof}
\end{lemma}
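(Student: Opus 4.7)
The plan is to differentiate the approximate system \eqref{approximate} in time and carry out a weighted $L^2$ energy estimate strictly parallel to the spatial estimate of Lemma \ref{yijiex}. The key structural simplification relative to the spatial case is that the homogeneous Dirichlet conditions $u|_{\partial\Omega}=q|_{\partial\Omega}=0$, together with the compatibility condition \eqref{compatibility condition}, immediately yield $u_t|_{\partial\Omega}=q_t|_{\partial\Omega}=0$. Consequently every boundary integral arising from integration by parts in $x$ of a term paired with $u_t$ or $q_t$ vanishes identically, and I never need to invoke an auxiliary identity like $(p_x - S_x)|_{\partial\Omega}=0$ that was required for the $x$-derivative estimate.

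Concretely I would apply $\partial_t$ to each equation of \eqref{approximate} to obtain the time-differentiated system \eqref{t}, and then test: the momentum equation against $\theta u_t$, the energy equation against $\theta_t$, the Cattaneo equation against $q_t/\kappa(\theta)$, and the Maxwell equation against $\theta S_t/\mu$. Using the mass equation in the form $v_{tt}=u_{tx}$ converts the pressure cross term $\int (R\theta^2/v^2) v_t v_{tt}\,\dif x$ into a clean $d/dt$-expression, and each test produces a weighted quadratic time derivative, while the Cattaneo and Maxwell tests additionally generate the dissipations $\int (v/\kappa(\theta)) q_t^2\,\dif x$ and $(1/(4\mu))\int S_t^2\,\dif x$. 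After summing the four identities, the three cross terms $\int (R\theta/v)\theta_t u_{tx}\,\dif x$, $\int \theta S_t u_{tx}\,\dif x$, and $\int \theta_t q_{tx}\,\dif x$ cancel exactly, in the same way they do in Lemma \ref{yijiex}. All remaining remainders are at worst trilinear in the unknowns and their derivatives, hence controllable by $C E^{1/2}(t)\mathcal D(t)$ via the $L^\infty$ bounds \eqref{new-hu2-1}--\eqref{new-hu4-4} and the one-dimensional Sobolev embedding.

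Two technical points will need care. First, the $\tau\epsilon b(x)S_{tx}$ perturbation in the Maxwell equation, once paired with $\theta S_t/\mu$ and integrated by parts in $x$, produces a boundary contribution of the form $(\tau\epsilon/(2\mu))\,b(x)\,\theta S_t^2\big|_0^1$ carrying the favorable sign (since $b(1)=1$ and $b(0)=-1$), plus an $O(\epsilon)$ interior term that is absorbed into the $S_t^2$ dissipation once $\epsilon$ is fixed small. Second, the initial bound
\begin{equation*}
\int_0^1 \bigl(u_t^2+v_t^2+\theta_t^2+\tau q_t^2+\tau S_t^2\bigr)(0,x)\,\dif x \leq C E_0
\end{equation*}
must be obtained by using the equations at $t=0$ to re-express each time derivative in terms of spatial derivatives of the initial data, which is precisely the content of the definition of $V_0^1$ in Assumption \ref{assu}(4); the well-preparedness \eqref{new-hu4-1} guarantees that the resulting constants are independent of $\tau$. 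Integrating the combined differential inequality over $[0,t]$ then delivers \eqref{yjgjt}.

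The main obstacle I expect is not any individual estimate but verifying that the cross terms cancel in their entirety, because the dissipation available at this level is only $\int(q_t^2+S_t^2)\,\dif x$ and cannot absorb a stray $\int f \cdot u_{tx}\,\dif x$ or $\int f \cdot q_{tx}\,\dif x$ at first order. The weights $\theta$ in the momentum and Maxwell tests, and $1/\kappa(\theta)$ in the Cattaneo test, are chosen precisely so that the cross terms produced match those generated by the $R\theta u_x/v$ convection in the energy equation, reflecting the underlying thermodynamic entropy structure that the author highlighted in the introduction.
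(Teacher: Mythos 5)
Your proposal follows essentially the same route as the paper: differentiate \eqref{approximate} in time, test the momentum, energy, Cattaneo, and Maxwell equations against $\theta u_t$, $\theta_t$, $q_t/\kappa(\theta)$, and $\theta S_t/\mu$ respectively, use $u_t|_{\partial\Omega}=q_t|_{\partial\Omega}=0$ to kill the boundary terms, cancel the three cross terms upon summation, absorb the $\tau\epsilon b(x)S_{tx}$ contribution for small $\epsilon$, and bound the initial layer by $\|V_0^1\|_{L^2}^2\le CE_0$. All the points you flag as needing care are handled exactly as you describe in the paper's own argument, so the proposal is correct.
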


The next lemma show the dissipative estimates of $D(u,v,\theta)$
\begin{lemma}\label{yjhsth}
There exists some constant $C$ such that
\begin{align}\label{yjhs}
\int_{0}^{t}\int_{0}^{1}|D(u,v,\theta)|^2\dif x\dif t\le C\left(E_0+E^\frac{1}{2}(t)\int_{0}^{t}\mathcal{D}(s)\dif s\right).
\end{align}
\end{lemma}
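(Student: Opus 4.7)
The goal is to upgrade the first–order $L^{\infty}_{t}L^{2}_{x}$ bounds of Lemmas \ref{yijiex} and \ref{yijiet} into $L^{2}_{t}L^{2}_{x}$ (dissipation) bounds for every first–order derivative of $v,u,\theta$. The key idea is that the constitutive laws $\eqref{approximate}_{4}$, $\eqref{approximate}_{5}$ and the momentum equation $\eqref{approximate}_{2}$ can be inverted algebraically to express $u_{x},\theta_{x},u_{t}$, and $\theta_{t}$ in terms of quantities whose space–time $L^{2}$ norms are already under control (namely $q, q_{t}, q_{x}, S, S_{t}, S_{x}$ from Lemmas \ref{0j}, \ref{yijiex}, \ref{yijiet}), while $v_{x}$ requires a short multiplier argument using the momentum equation.

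\textbf{Algebraic extraction.} From $\eqref{approximate}_{5}$, $\mu u_{x}=\tau S_{t}+vS+\tau\epsilon b(x)S_{x}$, so squaring and integrating over $(0,t)\times(0,1)$ gives $\int_{0}^{t}\!\!\int_{0}^{1}u_{x}^{2}\le C\int_{0}^{t}\!\!\int_{0}^{1}(\tau^{2}S_{t}^{2}+S^{2}+\tau^{2}\epsilon^{2}S_{x}^{2})$, and the right–hand side is bounded by $C(E_{0}+E^{1/2}(t)\int_{0}^{t}\mathcal D)$ from the previous lemmas; the continuity equation gives $v_{t}=u_{x}$ free of charge. Analogously $\eqref{approximate}_{4}$ yields $\kappa(\theta)\theta_{x}=-\tau(\theta)q_{t}-vq$, which furnishes the $\theta_{x}$ dissipation. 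Once $\theta_{x}$ and $v_{x}$ are controlled, the pointwise identity $u_{t}=-p_{x}+S_{x}=-\tfrac{R\theta_{x}}{v}+\tfrac{R\theta}{v^{2}}v_{x}+S_{x}$ produces the $u_{t}$ bound, and using $\eqref{approximate}_{3}$ to solve pointwise for $\theta_{t}$ in terms of $u_{x},q_{x},q\theta_{x},q^{2},S^{2}$ gives the $\theta_{t}$ bound (the purely nonlinear remainders drop into the $E^{1/2}(t)\mathcal D(t)$ term thanks to the a priori bound $E(t)\le\delta$).

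\textbf{Treatment of $v_{x}$ (the main step).} Multiplying $\eqref{approximate}_{2}$ by $v_{x}$, integrating over $(0,1)$, and using $v_{tx}=u_{xx}$ together with the boundary condition $u|_{\partial\Omega}=0$ to integrate by parts,
\begin{align*}
\int_{0}^{1}u_{t}v_{x}\,\dif x=\frac{\dif}{\dif t}\int_{0}^{1}u v_{x}\,\dif x+\int_{0}^{1}u_{x}^{2}\,\dif x.
\end{align*}
Combining this with $\int_{0}^{1}p_{x}v_{x}\,\dif x=-\int_{0}^{1}\tfrac{R\theta}{v^{2}}v_{x}^{2}\,\dif x+\int_{0}^{1}p_{\theta}\theta_{x}v_{x}\,\dif x$ and the right–hand side $\int_{0}^{1}S_{x}v_{x}\,\dif x$, one arrives at
\begin{align*}
\int_{0}^{1}\frac{R\theta}{v^{2}}v_{x}^{2}\,\dif x=\frac{\dif}{\dif t}\int_{0}^{1}u v_{x}\,\dif x+\int_{0}^{1}u_{x}^{2}\,\dif x+\int_{0}^{1}p_{\theta}\theta_{x}v_{x}\,\dif x-\int_{0}^{1}S_{x}v_{x}\,\dif x.
\end{align*}
Applying Cauchy–Schwarz to absorb $\tfrac{1}{2}\!\int\!\tfrac{R\theta}{v^{2}}v_{x}^{2}$ from the last two terms and integrating in time produces
\begin{align*}
\int_{0}^{t}\!\!\int_{0}^{1}v_{x}^{2}\,\dif x\dif s\le C\Bigl|\!\int_{0}^{1}u v_{x}\,\dif x\Bigr|_{0}^{t}+C\int_{0}^{t}\!\!\int_{0}^{1}(u_{x}^{2}+\theta_{x}^{2}+S_{x}^{2})\,\dif x\dif s.
\end{align*}

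\textbf{Closing the estimate.} The time–boundary term is the only subtle point. Writing $\bigl|\!\int u v_{x}\,\dif x\bigr|\le\tfrac{1}{2}\|u\|_{L^{2}}^{2}+\tfrac{1}{2}\|v_{x}\|_{L^{2}}^{2}$, both norms are already controlled by $C(E_{0}+E^{1/2}(t)\int_{0}^{t}\mathcal D)$—the first by Lemma \ref{0j} and the second by Lemma \ref{yijiex}—so the boundary contribution has precisely the required form. Combining the five pieces ($u_{x},v_{t},\theta_{x},v_{x},u_{t},\theta_{t}$) yields \eqref{yjhs}. The principal difficulty is the $v_{x}$ step: one must identify the correct multiplier and rely on the continuity equation to produce a good coercive term $\int u_{x}^{2}$ rather than a sign–indefinite boundary term; once that is in place, every other derivative is recovered by purely algebraic manipulation of the system.
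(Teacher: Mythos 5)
Your proposal is correct, and its overall strategy coincides with the paper's: invert the relaxed constitutive laws $\eqref{approximate}_4$, $\eqref{approximate}_5$ algebraically to get $u_x$, $v_t$, $\theta_x$ (and then $\theta_t$ from the energy equation), and use one genuine multiplier argument on the momentum equation for the remaining pair $u_t$, $v_x$. The only real difference is which member of that pair receives the multiplier treatment. The paper multiplies $\eqref{approximate}_2$ by $u_t$, integrates by parts in time (producing $-\int_0^t\frac{\dif}{\dif t}\int p_x u$ and $-\int\!\!\int p_t u_x$), obtains the $u_t$ dissipation first, and then recovers $v_x$ algebraically from $\frac{R\theta}{v^2}v_x=u_t-S_x+\frac{R\theta_x}{v}$. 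You instead multiply by $v_x$, exploit $v_{tx}=u_{xx}$ and $u|_{\partial\Omega}=0$ to generate the coercive term $\int u_x^2$ together with the total derivative $\frac{\dif}{\dif t}\int uv_x$, obtain $v_x$ first, and then read off $u_t$ pointwise from the momentum equation. Both versions lean on the same ingredients in the same way: the time-boundary term ($\int p_xu$ in the paper, $\int uv_x$ in yours) is controlled at time $t$ by the $L^\infty_tL^2_x$ bounds of Lemmas \ref{0j} and \ref{yijiex}, and the space-time integrals of $u_x^2$, $\theta_x^2$, $S_x^2$ are already available. Neither route is simpler than the other; your swap is a perfectly serviceable variant and closes the estimate with the same constants' structure $C\bigl(E_0+E^{1/2}(t)\int_0^t\mathcal D(s)\dif s\bigr)$.
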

\begin{proof}
From $\eqref{approximate}_5$, we can get 
\begin{align*}
\int_{0}^{t}\int_{0}^{1} (u^2_x+v^2_t)\dif x\dif t \le C \int_{0}^{t}\int_{0}^{1}(\tau^2(S_t^2+\epsilon S^2_x)+S^2)\dif x\dif t
\le C\left(E_0+E^\frac{1}{2}(t)\int_{0}^{t}\mathcal{D}(s)\dif s\right).
\end{align*}
In view of  $\eqref{approximate}_4$ and $\eqref{approximate}_3$, it follows that
\begin{align*}
\int_{0}^{t}\int_{0}^{1}\theta^2_x\dif x\dif t\le C \int_{0}^{t}\int_{0}^{1}(\tau^2 q^2_t+q^2)\dif x\dif t\le C\left(E_0+E^\frac{1}{2}(t)\int_{0}^{t}\mathcal{D}(s)\dif s\right).\end{align*}
and
\begin{align*}
\int_{0}^{t}\int_{0}^{1}\theta^2_t\dif x\dif t
&\le\int_{0}^{t}\int_{0}^{1}(u^2_x+q^2_x)\dif x\dif t+C\left(E^\frac{1}{2}(t)\int_{0}^{t}\mathcal{D}(s)\dif s+E_0\right)\\
&\le C\left(E_0+E^\frac{1}{2}(t)\int_{0}^{t}\mathcal{D}(s)\dif s\right).
\end{align*}
On the other hand, multiplying the equation $\eqref{approximate}_2$ by $u_t$, it yields
\begin{align*}
&\int_{0}^{t}\int_{0}^{1}u^2_t\dif x\dif t
=-\int_{0}^{t}\int_{0}^{1}p(v,\theta)_xu_t\dif x\dif t+\int_{0}^{t}\int_{0}^{1}S_xu_t\dif x\dif t\\
&=-\int_{0}^{t}\frac{\dif}{\dif t}\int_{0}^{1}p(v,\theta)_xu\dif x\dif t+\int_{0}^{t}\int_{0}^{1}p(v,\theta)_{tx}u\dif x\dif t+\int_{0}^{t}\int_{0}^{1}S_xu_t\dif x\dif t\\
&\le-\int_{0}^{1}p(v,\theta)_xu\dif x+CE_0-\int_{0}^{t}\int_{0}^{1}p(v,\theta)_tu_x\dif x\dif t+\int_{0}^{t}\int_{0}^{1}S_xu_t\dif x\dif t\\
&\le\frac{1}{2}\int_{0}^{t}\int_{0}^{1}u^2_t\dif x\dif t+C\int_{0}^{1}(v^2_x+\theta^2_x+u^2)\dif x+C\int_{0}^{t}\int_{0}^{1}(v^2_t+\theta^2_t+u^2_x+S^2_x)\dif x\dif t+CE_0,
\end{align*}
which gives
\begin{align*}
\int_{0}^{t}\int_{0}^{1}u^2_t\dif x\dif t
\le C\left(E_0+E^\frac{1}{2}(t)\int_{0}^{t}\mathcal{D}(s)\dif s\right).
\end{align*}
Similarly, using the equation $\eqref{approximate}_2$, one also get 
\begin{align*}
\int_{0}^{t}\int_{0}^{1}v^2_x\dif x\dif t
\le C\left(E_0+E^\frac{1}{2}(t)\int_{0}^{t}\mathcal{D}(s)\dif s\right).
\end{align*}
Thus, combining the above estimates, the estimate \eqref{yjhs} follows immediately.
\end{proof}

Combining Lemmas $\eqref{0j}-\eqref{yjhsth}$, we conclude that
\begin{lemma}\label{le3.7}
There exists some constant $C$ such that
\begin{align}
\sum_{\alpha=0}^{1}\left\|D^{\alpha}(v-1,\theta-1,u,\sqrt{\tau   }q,\sqrt{\tau  }S)\right\|_{L^{2}}^{2}+\int_{0}^{t}(\left\|D(v,u,\theta)\right\|_{L^{2}}^{2}+\sum_{\alpha=0}^{1}\left\|D^{\alpha}(q,S)\right\|_{L^{2}}^{2})\dif t\nonumber\\
\le C\left(E_0+E^\frac{1}{2}(t)\int_{0}^{t}\mathcal{D}(s)\dif s\right).
\end{align}
\end{lemma}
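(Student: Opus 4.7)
The conclusion of this lemma is a straightforward aggregation of the four preceding lemmas, so my plan is essentially bookkeeping rather than any new analysis. I expect no serious obstacle; the only care required is matching the $\sqrt{\tau}$ weights correctly across the inequalities.

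First I would take Lemmas \ref{0j}, \ref{yijiex}, and \ref{yijiet} and sum their conclusions \eqref{0jgjx}, \eqref{yjgjx}, \eqref{yjgjt}. The spatial $L^2(\Omega)$ integrals on the left-hand sides cover exactly the undifferentiated tuple $(v-1,u,\theta-1,\sqrt{\tau}q,\sqrt{\tau}S)$ together with its $\partial_x$ and $\partial_t$ derivatives (with the same $\sqrt{\tau}$ weight on $q$ and $S$). This is precisely $\sum_{\alpha=0}^{1}\|D^\alpha(v-1,\theta-1,u,\sqrt{\tau}q,\sqrt{\tau}S)\|_{L^2}^2$. In particular, since \eqref{0jgjx} contains $\tau q^2$ and $\tau S^2$, which equal $(\sqrt{\tau}q)^2 + (\sqrt{\tau}S)^2$, and \eqref{yjgjx}, \eqref{yjgjt} contain $\tau q_x^2,\tau S_x^2,\tau q_t^2,\tau S_t^2 = |D^1(\sqrt{\tau}q)|^2 + |D^1(\sqrt{\tau}S)|^2$, no further algebra is needed.

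Second, for the dissipation integral, the $\int_0^t\int_0^1$ parts of \eqref{0jgjx}, \eqref{yjgjx}, \eqref{yjgjt} give control of
\[
\int_0^t\bigl(\|q\|_{L^2}^2 + \|S\|_{L^2}^2 + \|q_x\|_{L^2}^2 + \|S_x\|_{L^2}^2 + \|q_t\|_{L^2}^2 + \|S_t\|_{L^2}^2\bigr)\,\dif s = \int_0^t\sum_{\alpha=0}^{1}\|D^\alpha(q,S)\|_{L^2}^2\,\dif s.
\]
The remaining piece $\int_0^t\|D(v,u,\theta)\|_{L^2}^2\,\dif s$ is supplied verbatim by \eqref{yjhs} from Lemma \ref{yjhsth}. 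Adding this to the previous display yields the full time integral on the left-hand side of the claimed inequality.

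Third, since each of the four summed inequalities is majorized by $C\bigl(E_0 + E^{1/2}(t)\int_0^t \mathcal{D}(s)\,\dif s\bigr)$ with universal constants independent of $\tau$ and $\epsilon$, so is their sum, and the result follows. No cancellation argument, no integration by parts, and no new auxiliary functional is required at this stage; the lemma merely packages the already-proved estimates into a single statement that will be invoked, together with the forthcoming second-order estimates, in the proof of Proposition \ref{zong}.
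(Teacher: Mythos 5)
Your proposal is correct and matches the paper exactly: the paper gives no separate argument for Lemma \ref{le3.7}, stating only that it follows by combining Lemmas \ref{0j}--\ref{yjhsth}, which is precisely the summation and bookkeeping you carry out. The identification of the $\sqrt{\tau}$-weighted terms and the role of Lemma \ref{yjhsth} in supplying $\int_0^t\|D(v,u,\theta)\|_{L^2}^2\,\dif s$ are both as intended.
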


Now, we are ready to give the second-order estimates of solutions.
\begin{lemma}\label{le3.8}
There exists some constant $C$ such that
\begin{align}\label{ejtt}
\int_{0}^{1} \tau^2 (u^2_{tt}+v^2_{tt}+\theta^2_{tt}+\tau q^2_{tt}+\tau  S^2_{tt})\dif x+\int_{0}^{t}\int_{0}^{1} \tau^2 (q^2_{tt}+S^2_{tt})\dif x\dif t\nonumber\\
\le C\left(E_0+E^\frac{1}{2}(t)\int_{0}^{t}\mathcal{D}(s)\dif s\right).
\end{align}
\end{lemma}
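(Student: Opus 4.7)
\textbf{Proof plan for Lemma \ref{le3.8}.} The strategy parallels the first-order $t$-derivative estimate (Lemma \ref{yijiet}), with an additional weight $\tau^2$ chosen so that the initial energy matches $\|V_0^2\|_{L^2}^2 \le CE_0$ per Assumption \ref{assu}(4). I would first apply $\partial_t^2$ to the five equations in \eqref{approximate} to obtain evolution equations for $(v_{tt}, u_{tt}, \theta_{tt}, q_{tt}, S_{tt})$; crucially, since the boundary conditions $u|_{\partial\Omega}=q|_{\partial\Omega}=0$ are time-independent, we have the compatibility-free identities $u_{tt}|_{\partial\Omega}=q_{tt}|_{\partial\Omega}=0$, which will allow boundary terms to vanish.

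Next, I would test the $\partial_t^2$-differentiated momentum equation against $\tau^2\theta u_{tt}$, the energy equation against $\tau^2 \theta_{tt}$, the heat-flux equation against $\tau^2 q_{tt}/\kappa(\theta)$, and the stress equation against $\tau^2(\theta/\mu) S_{tt}$, then integrate over $[0,1]$ and sum. Following the computation of Lemma \ref{yijiet}, integration by parts (using $u_{tt}|_{\partial\Omega}=q_{tt}|_{\partial\Omega}=0$) converts the cross terms $\tau^2\!\int\theta(p_{ttx}-S_{ttx})u_{tt}$ and the analogous heat-flux cross terms into $\tau^2\!\int \frac{R\theta^2}{v^2} v_{tt} v_{ttt}$ plus terms that cancel against the ones coming from $\theta_{tt}$ and $q_{tt}/\kappa(\theta)$ tests. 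This should produce the energy identity
\begin{align*}
\frac{\dif}{\dif t}\!\int_0^1\!\tau^2\!\left(\frac{\theta}{2}u_{tt}^2+\frac{R\theta^2}{v^2}v_{tt}^2+\frac{e_\theta}{2}\theta_{tt}^2+\frac{Z(\theta)}{2}q_{tt}^2+\frac{\tau\theta}{2\mu}S_{tt}^2\right)\dif x+\!\int_0^1\!\tau^2\!\left(\frac{v}{\kappa(\theta)}q_{tt}^2+\frac{1}{4\mu}S_{tt}^2\right)\dif x\le CE^{1/2}(t)\mathcal{D}(t),
\end{align*}
after absorbing the $\epsilon$-penalty term as in the proof of Lemma \ref{yijiet}. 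Integrating in $t$, bounding the initial slice by $\|V_0^2\|_{L^2}^2\le CE_0$ through Assumption \ref{assu}(4), and using \eqref{new-hu2-1}--\eqref{new-hu4-3} to render the coefficients uniformly positive, will deliver \eqref{ejtt}.

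\textbf{The main obstacle} lies in controlling the commutator terms generated by $\partial_t^2$ acting on the nonlinearities, which contain products of three derivatives such as $\tau^2(e_\theta)_{tt}\theta_t\theta_{tt}$, $\tau^2 p_{vv}v_t^2 u_{ttx}$, $\tau^2 \tau'(\theta)\theta_{tt} q_t q_{tt}$, and the quadratic source terms $\tau^2[(q^2 v)/\tau(\theta)]_{tt}\,\theta_{tt}$, $\tau^2(S^2 v/\mu)_{tt}\,\theta_{tt}$. For each such term I would extract one factor in $L^\infty$ via the one-dimensional Sobolev embedding $H^1\hookrightarrow L^\infty$ applied to $(v-1,u,\theta-1,\sqrt{\tau}q,\sqrt{\tau}S)$ and their $x$- or $t$-derivatives whose norms are already controlled by $E^{1/2}(t)$ through Lemma \ref{le3.7} and the a priori bounds \eqref{new-hu4-4}; the remaining quadratic factor in $L^2$ is an ingredient of $\mathcal{D}(t)$. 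The weight $\tau^2$ is essential: for example, $\tau^2 q_t q_{tt}\cdot\theta_{tt}$ is bounded by $\|\sqrt{\tau}q_t\|_{L^\infty}\cdot\tau\|q_{tt}\|_{L^2}\cdot\|\sqrt{\tau}\theta_{tt}\|_{L^2}$ (but more carefully, one factor $\tau q_{tt}$ lives in the dissipation $\tau^2 q_{tt}^2$ and is absorbed after applying Cauchy-Schwarz with small constant $E^{1/2}(t)$). Finally, the $v_{tt}$-dissipation is obtained, exactly as in the first-order case, by inspecting the $\tau^2\int \theta\, p_v v_{tt}u_{ttx}$ cross-term and reconstructing the $\tau^2\frac{d}{dt}\!\int\frac{R\theta^2}{v^2}v_{tt}^2$ identity after using $v_{ttt}=u_{ttx}$.
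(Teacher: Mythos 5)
Your proposal is correct and follows essentially the same route as the paper: apply $\partial_t^2$ to the system, test the resulting equations against $\theta u_{tt}$, $\theta_{tt}$, $q_{tt}/\kappa(\theta)$ and $(\theta/\mu)S_{tt}$ (the paper carries the $\tau^2$ weight at the end rather than in the test functions, which is equivalent), use $u_{tt}|_{\partial\Omega}=q_{tt}|_{\partial\Omega}=0$ to kill the boundary terms, reconstruct the $v_{tt}$ energy via $v_{ttt}=u_{ttx}$, absorb the $\epsilon$-term, and bound the initial slice by $\|V_0^2\|_{L^2}^2\le CE_0$. The only nuance worth noting is that the paper flags the use of $u_{tt}|_{\partial\Omega}=0$ as formal (the local solution lacks the regularity to justify it directly) and invokes a density argument, and it tolerates a $\frac{C}{\tau}$ loss in the intermediate commutator estimates that the final $\tau^2$ weight absorbs — both consistent with what you describe.
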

\begin{proof}
Taking derivatives with respect to $t$ twice to the equations \eqref{approximate}, we get
\begin{align}\label{tt}
\begin{cases}
v_{ttt}=u_{ttx},\\
u_{ttt}+{p(v,\theta)}_{ttx}=S_{ttx},\\
(e_{\theta}\theta_t)_{tt}-(\frac{2a(\theta)}{Z(\theta)}q\theta_x)_{tt}+(\frac{R\theta u_x}{v})_{tt}+q_{ttx}=(\frac{2a(\theta)}{\tau   (\theta)}q^2v)_{tt}+(\frac{S^2v}{\mu})_{tt},\\
\left(\tau   (\theta)q_t\right)_{tt}+(vq)_{tt}+(\kappa(\theta)\theta_x)_{tt}=0,\\
\tau  S_{ttt}+(vS)_{tt}+\tau  \epsilon b(x)S_{ttx}=\mu u_{ttx}.
\end{cases}
\end{align} 
Multiplying the equation $\eqref{tt}_2$ by $\theta u_{tt}$, we get
$$\int_{0}^{1} \theta u_{ttt}u_{tt}\dif x+\int_{0}^{1} \theta p(v,\theta)_{xtt}u_{tt}\dif x-\int_{0}^{1} \theta S_{xtt}u_{tt}\dif x=0,$$
where
\begin{align*}
\int_{0}^{1} \theta u_{ttt}u_{tt}\dif x=\frac{1}{2}\frac{\dif}{\dif t}\int_{0}^{1}\theta u^2_{tt}\dif x-\frac{1}{2}\int_{0}^{1}\theta_tu^2_{tt}\dif x\ge\frac{1}{2}\frac{\dif}{\dif t}\int_{0}^{1}\theta u^2_{tt}\dif x-CE^{\frac{1}{2}}(t)\mathcal{D}(t)
\end{align*}
and
\begin{align*}
&\int_{0}^{1} \theta p(v,\theta)_{xtt}u_{tt}\dif x\\
=&\int_{0}^{1}(p_{vvv}v^2_tv_x+2p_{vv\theta}v_tv_x\theta_t+p_{vv\theta}v^2_t\theta_x+2p_{vv}v_{tx}v_t+p_{vv}v_{tt}v_x+p_{v\theta}v_x\theta_{tt}+2p_{v\theta}v_{tx}\theta_t+p_{v\theta}v_{tt}\theta_x\\
&+2p_{v\theta}v_t\theta_{tx}+p_vv_{ttx}+p_\theta\theta_{ttx})\theta u_{tt}\dif x\\
\ge&\int_{0}^{1}(p_vv_{ttx}+p_\theta\theta_{ttx})\theta u_{tt}\dif x-CE^{\frac{1}{2}}(t)\mathcal{D}(t)\\
=&\int_{0}^{1}\theta p_vu_{txx}u_{tt}\dif x+\int_{0}^{1}\frac{R\theta}{v}\theta_{ttx}u_{tt}\dif x-CE^{\frac{1}{2}}(t)\mathcal{D}(t)\\
=&\theta p_vu_{tx}u_{tt}\big|_{0}^{1}-\int_{0}^{1}(\theta_xp_vu_{tt}+\theta p_{vv}v_xu_{tt}+\theta p_{v\theta}\theta_xu_{tt}+\theta p_vu_{ttx})u_{tx}\dif x+\int_{0}^{1}\frac{R\theta}{v}\theta_{ttx}u_{tt}\dif x-CE^{\frac{1}{2}}(t)\mathcal{D}(t)\\
\ge&-\int_{0}^{1}\theta p_vu_{ttx}u_{tx}\dif x+\int_{0}^{1}\frac{R\theta}{v}\theta_{ttx}u_{tt}\dif x-CE^{\frac{1}{2}}(t)\mathcal{D}(t)\\
=&-\frac{1}{2}\frac{\dif}{\dif t}\int_{0}^{1}\theta p_vv^2_{tt}\dif x+\frac{1}{2}\int_{0}^{1}(p_{vv}v_t\theta+p_{v\theta}\theta_t\theta+p_v\theta_t)v^2_{tt}\dif x+\int_{0}^{1}\frac{R\theta}{v}\theta_{ttx}u_{tt}\dif x-CE^{\frac{1}{2}}(t)\mathcal{D}(t)\\
\ge&-\frac{1}{2}\frac{\dif}{\dif t}\int_{0}^{1}\theta p_vv^2_{tt}\dif x+\int_{0}^{1}\frac{R\theta}{v}\theta_{ttx}u_{tt}\dif x-CE^{\frac{1}{2}}(t)\mathcal{D}(t).
\end{align*}

Note that the above estimates (also in what follows) is essentially formal because the regularity of the local solutions is insufficient to validate several of steps. In particular, we use the high-order compatibility condition $u_{tt}(t,0)=u_{tt}(t,1)=0$. However, a standard density argument will eliminate the needs for the extra regularity of local solutions.

It follows that
\begin{align*}
-\int_{0}^{1}\theta S_{xtt}u_{tt}\dif x=-\theta S_{tt}u_{tt}\big|_{0}^{1}+\int_{0}^{1}(\theta_xS_{tt}u_{tt}+\theta S_{tt}u_{ttx})\dif x
\ge\int_{0}^{1}\theta S_{tt}u_{ttx}\dif x-\frac{C}{\tau} E^{\frac{1}{2}}(t)\mathcal{D}(t).
\end{align*}
Therefore, we derive that
\begin{align}\label{u_{tt1}}
\frac{1}{2}\frac{\dif}{\dif t}\int_{0}^{1}(\theta u^2_{tt}+\frac{R\theta^2}{v^2} v^2_{tt})\dif x+\int_{0}^{1}\frac{R\theta}{v}\theta_{ttx}u_{tt}\dif x+\int_{0}^{1}\theta S_{tt}u_{ttx}\dif x\le \frac{C}{\tau}E^{\frac{1}{2}}(t)\mathcal{D}(t).
\end{align}
Multiplying the equation $\eqref{tt}_3$ by $\theta_{tt}$, we get
\begin{align}
&\int_{0}^{1}(e_{\theta}\theta_t)_{tt}\theta_{tt}\dif x-\int_{0}^{1}\left(\frac{2a(\theta)}{Z(\theta)}q\theta_x\right)_{tt}\theta_{tt}\dif x+\int_{0}^{1}\left(\frac{R\theta u_x}{v}\right)_{tt}\theta_{tt}\dif x+\int_{0}^{1}q_{ttx}\theta_{tt}\dif x \nonumber\\
&=\int_{0}^{1}\left(\frac{2a(\theta)}{\tau   (\theta)}q^2v\right)_{tt}\theta_{tt}\dif x+\int_{0}^{1}\left(\frac{S^2v}{\mu}\right)_{tt}\theta_{tt}\dif x. \label{new-hu-3}
\end{align}
We shall estimate each term of \eqref{new-hu-3} individually. 

First,  by noting that $|e_{\theta q}|_{L^\infty} \le C \tau |q|_{L^\infty} \le C \tau E^\frac{1}{2}(t)$, we have
\begin{align*}
&\int_{0}^{1}(e_{\theta}\theta_t)_{tt}\theta_{tt}\dif x\\
&=\int_{0}^{1}\left(e_{\theta\theta\theta}\theta^3_t+2e_{\theta\theta q}\theta^2_tq_t+e_{\theta qq}\theta_tq^2_t+2e_{\theta\theta}\theta_t\theta_{tt}+e_{\theta q}\theta_tq_{tt}+e_{\theta q}\theta_{tt}q_t+e_{\theta}\theta_{ttt}\right)\theta_{tt}\dif x\\
&\ge \int_{0}^{1}e_{\theta}\theta_{ttt}\theta_{tt}\dif x-CE^{\frac{1}{2}}(t)\mathcal{D}(t)\\
&=\frac{1}{2}\frac{\dif}{\dif t}\int_{0}^{1}e_{\theta}\theta^2_{tt}\dif x-\frac{1}{2}\int_{0}^{1}\left(e_{\theta\theta}\theta_t+e_{\theta q}q_t\right)\theta^2_{tt}\dif x-CE^{\frac{1}{2}}(t)\mathcal{D}(t)\\
&\ge\frac{1}{2}\frac{\dif}{\dif t}\int_{0}^{1}e_{\theta}\theta^2_{tt}\dif x-CE^{\frac{1}{2}}(t)\mathcal{D}(t),
\end{align*}
and
\begin{align*}
&-\int_{0}^{1}(\frac{2a(\theta)}{Z(\theta)}q\theta_x)_{tt}\theta_{tt}\dif x\\
=&-(\frac{2a^{\prime\prime}(\theta)}{Z(\theta)}\theta^2_tq\theta_x-\frac{4a^\prime(\theta)Z^\prime(\theta)}{Z^2(\theta)}\theta^2_tq\theta_x+\frac{2a^{\prime}(\theta)}{Z(\theta)}\theta_{tt}q\theta_x+\frac{4a^{\prime}(\theta)}{Z(\theta)}\theta_tq_t\theta_x+\frac{4a^{\prime}(\theta)}{Z(\theta)}\theta_tq\theta_{tx}\\
&-\frac{2a(\theta)Z^{\prime\prime}(\theta)}{Z^2(\theta)}\theta^2_tq\theta_x+\frac{4a(\theta){Z^\prime(\theta)}^2}{Z^3(\theta)}\theta^2_tq\theta_x-\frac{2a(\theta)Z^\prime(\theta)}{Z^2(\theta)}\theta_{tt}q\theta_x-\frac{4a(\theta)Z^\prime(\theta)}{Z^2(\theta)}\theta_tq\theta_{tx}\\
&-\frac{4a(\theta)Z^\prime(\theta)}{Z^2(\theta)}q_t\theta_t\theta_x+\frac{2a(\theta)}{Z(\theta)}q_{tt}\theta_x+\frac{4a(\theta)}{Z(\theta)}q_t\theta_{tx}+\frac{2a(\theta)}{Z(\theta)}q\theta_{ttx})\theta_{tt}\dif x\\
\ge&-\int_{0}^{1}\frac{2a(\theta)}{Z(\theta)}q\theta_{ttx}\theta_{tt}\dif x-\frac{C}{\tau}E^{\frac{1}{2}}(t)\mathcal{D}(t)\\
=&-\frac{a(\theta)}{Z(\theta)}q\theta^2_{tt}\big|_{0}^{1}+\int_{0}^{1}\theta^2_{tt}\left(\frac{a^\prime(\theta)}{Z(\theta)}\theta_xq-\frac{a(\theta)Z^\prime(\theta)}{Z^2(\theta)}\theta_xq+\frac{a(\theta)}{Z(\theta)}q_x\right)\dif x-\frac{C}{\tau}E^{\frac{1}{2}}(t)\mathcal{D}(t)\\
\ge&-\frac{C}{\tau}E^{\frac{1}{2}}(t)\mathcal{D}(t).
\end{align*}
Similarly, one has
\begin{align*}
&\int_{0}^{1}\left(\frac{R\theta u_x}{v}\right)_{tt}\theta_{tt}\dif x\\
&=\int_{0}^{1}R\left(\frac{\theta_{tt}u_x}{v}+\frac{2\theta_tu_{xt}}{v}-\frac{2\theta_tu_xv_t}{v^2}-\frac{2\theta u_{xt}v_t}{v^2}-\frac{\theta u_xv_{tt}}{v^2}+\frac{2\theta u_xv^2_t}{v^3}+\frac{\theta u_{xtt}}{v}\right)\theta_{tt}\dif x\\
&\ge\int_{0}^{1}\frac{R\theta}{v}u_{xtt}\theta_{tt}\dif x-CE^{\frac{1}{2}}(t)\mathcal{D}(t)\\
&=\frac{R\theta}{v}u_{tt}\theta_{tt}\big|_{0}^{1}-\int_{0}^{1}\left(\frac{R\theta_x\theta_{tt}}{v}+\frac{R\theta\theta_{xtt}}{v}-\frac{R\theta v_x\theta_{tt}}{v^2}\right)u_{tt}\dif x-CE^{\frac{1}{2}}(t)\mathcal{D}(t)\\
&\ge-\int_{0}^{1}\frac{R\theta}{v}\theta_{xtt}u_{tt}\dif x-CE^{\frac{1}{2}}(t)\mathcal{D}(t),
\end{align*}
and
\begin{align*}
\int_{0}^{1}q_{ttx}\theta_{tt}\dif x=q_{tt}\theta_{tt}\big|_{0}^{1}-\int_{0}^{1}q_{tt}\theta_{ttx}\dif x=-\int_{0}^{1}q_{tt}\theta_{ttx}\dif x.
\end{align*}
For the right-hand-side of \eqref{new-hu-3}, one obtain
\begin{align*}
&\int_{0}^{1}\left(\frac{2a(\theta)}{\tau   (\theta)}q^2v\right)_{tt}\theta_{tt}\dif x\\
=&\{\frac{2a^{\prime\prime}(\theta)}{\tau   (\theta)}\theta^2_tq^2v-\frac{4a^\prime(\theta)\tau   ^\prime(\theta)}{\tau   ^2(\theta)}\theta^2_tq^2v+\frac{2a^{\prime}(\theta)}{\tau   (\theta)}\theta_{tt}q^2v+\frac{4a^{\prime}(\theta)}{\tau   (\theta)}\theta_t2qq_tv+\frac{4a^{\prime}(\theta)}{\tau   (\theta)}\theta_tq^2v_t\\
&-\frac{2a(\theta)\tau   ^{\prime\prime}(\theta)}{\tau   ^2(\theta)}\theta^2_tq^2v+\frac{4a(\theta){\tau   ^\prime(\theta)}^2}{\tau   ^3(\theta)}\theta^2_tq^2v-\frac{2a(\theta)\tau   ^\prime(\theta)}{\tau   ^2(\theta)}\theta_{tt}q^2v-\frac{2a(\theta)\tau   ^\prime(\theta)}{\tau   ^2(\theta)}\theta_tq^2v_t-\frac{4a(\theta)\tau   ^\prime(\theta)}{\tau   ^2(\theta)}2qq_t\theta_tv\\
&+\frac{2a(\theta)}{\tau   (\theta)}2qq_{tt}v+\frac{2a(\theta)}{\tau   (\theta)}2q^2_tv+\frac{4a(\theta)}{\tau   (\theta)}2qq_tv_t-\frac{4a(\theta)\tau   ^\prime(\theta)}{\tau   ^2(\theta)}\theta_tq^2v_t+\frac{2a(\theta)}{\tau   (\theta)}q^2v_{tt}\}\theta_{tt}\dif x\\
\le& \frac{C}{\tau}E^{\frac{1}{2}}(t)\mathcal{D}(t)
\end{align*}
and
\begin{align*}
\int_{0}^{1}(\frac{S^2v}{\mu})_{tt}\theta_{tt}\dif x=\int_{0}^{1}\frac{1}{\mu}\left(2S^2_tv+2SS_{tt}v+4SS_tv_t+S^2v_{tt}\right)\theta_{tt}\dif x\le \frac{C}{\tau}E^{\frac{1}{2}}(t)\mathcal{D}(t).
\end{align*}
Therefore, we derive that
\begin{align}\label{u_{tt2}}
\frac{1}{2}\frac{\dif}{\dif t}\int_{0}^{1}e_{\theta}\theta^2_{tt}\dif x-\int_{0}^{1}\frac{R\theta}{v}\theta_{xtt}u_{tt}\dif x-\int_{0}^{1}q_{tt}\theta_{ttx}\dif x\le \frac{C}{\tau}E^{\frac{1}{2}}(t)\mathcal{D}(t).
\end{align}
Multiplying the equation $\eqref{tt}_4$ by $\frac{q_{tt}}{\kappa(\theta)}$, we get
\begin{align}\label{new-hu-4}
\int_{0}^{1}\left(\tau   (\theta)q_t\right)_{tt}\frac{q_{tt}}{\kappa(\theta)}\dif x+\int_{0}^{1}(vq)_{tt}\frac{q_{tt}}{\kappa(\theta)}\dif x+\int_{0}^{1}(\kappa(\theta)\theta_x)_{tt}\frac{q_{tt}}{\kappa(\theta)}\dif x=0.
\end{align}
Using similar method as before,  one can get
\begin{align*}
\int_{0}^{1}\left(\tau   (\theta)q_t\right)_{tt}\frac{q_{tt}}{\kappa(\theta)}\dif
x&=\int_{0}^{1}\frac{1}{\kappa(\theta)}\left(\tau   ^{\prime\prime}(\theta)\theta^2_tq_t+\tau   ^\prime(\theta)\theta_{tt}q_t+2\tau   ^\prime(\theta)\theta_tq_{tt}+\tau   (\theta)q_{ttt}\right)q_{tt}\dif x\\
&\ge\int_{0}^{1}Z(\theta)q_{ttt}q_{tt}\dif x-\frac{C}{\tau} E^{\frac{1}{2}}(t)\mathcal{D}(t)\\
&=\frac{1}{2}\frac{\dif}{\dif t}\int_{0}^{1}Z(\theta)q^2_{tt}\dif x-\frac{1}{2}\int_{0}^{1}Z^\prime(\theta)\theta_tq^2_{tt}\dif x-\frac{C}{\tau} E^{\frac{1}{2}}(t)\mathcal{D}(t)\\
&\ge\frac{1}{2}\frac{\dif}{\dif t}\int_{0}^{1}Z(\theta)q^2_{tt}\dif x-\frac{C}{\tau}E^{\frac{1}{2}}(t)\mathcal{D}(t),
\end{align*}
and
\begin{align*}
\int_{0}^{1}(vq)_{tt}\frac{q_{tt}}{\kappa(\theta)}\dif x
=\int_{0}^{1}\frac{1}{\kappa(\theta)}\left(v_{tt}q+2v_tq_t+vq_{tt}\right)q_{tt}\dif x
\ge\int_{0}^{1}\frac{v}{\kappa(\theta)}q^2_{tt}\dif x-\frac{C}{\tau}E^{\frac{1}{2}}(t)\mathcal{D}(t),
\end{align*}
and
\begin{align*}
\int_{0}^{1}(\kappa(\theta)\theta_x)_{tt}\frac{q_{tt}}{\kappa(\theta)}\dif x
&=\int_{0}^{1}\frac{1}{\kappa(\theta)}\left(\kappa^{\prime\prime}(\theta)\theta^2_t\theta_x+\kappa^\prime(\theta)\theta_{tt}\theta_x+2\kappa^\prime(\theta)\theta_t\theta_{tx}+\kappa(\theta)\theta_{ttx}\right)q_{tt}\dif x\\
&\ge\int_{0}^{1}\theta_{ttx}q_{tt}\dif x-\frac{C}{\tau}E^{\frac{1}{2}}(t)\mathcal{D}(t).
\end{align*}
Therefore, we derive that
\begin{align}\label{u_{tt3}}
\frac{1}{2}\frac{\dif}{\dif t}\int_{0}^{1}Z(\theta)q^2_{tt}\dif x+\int_{0}^{1}\frac{v}{\kappa(\theta)}q^2_{tt}\dif x+\int_{0}^{1}\theta_{ttx}q_{tt}\dif x\le \frac{C}{\tau} E^{\frac{1}{2}}(t)\mathcal{D}(t).
\end{align}
Multiplying the equation $\eqref{tt}_5$ by $\frac{\theta}{\mu}S_{tt}$, we get
\begin{align}
\int_{0}^{1}\frac{\theta}{\mu}S_{tt}\tau  S_{ttt}\dif x+\int_{0}^{1}\frac{\theta}{\mu}S_{tt}(vS)_{tt}\dif x-\int_{0}^{1}\theta S_{tt} u_{ttx}\dif x+\int_{0}^{1}\frac{\tau  \epsilon}{\mu} b(x)S_{ttx}\theta S_{tt}\dif x 
=0. \label{new-hu-5}
\end{align}
For the left-hand-side of \eqref{new-hu-5},  we have
\begin{align*}
\int_{0}^{1}\frac{\theta}{\mu}S_{tt}\tau  S_{ttt}\dif x
=\frac{\tau  }{2\mu}\frac{\dif}{\dif t}\int_{0}^{1}\theta S^2_{tt}\dif x-\frac{\tau  }{2\mu}\int_{0}^{1}\theta_t S^2_{tt}\dif x
\ge\frac{\tau  }{2\mu}\frac{\dif}{\dif t}\int_{0}^{1}\theta S^2_{tt}\dif x-\frac{C}{\tau} E^{\frac{1}{2}}(t)\mathcal{D}(t),
\end{align*}
and
\begin{align*}
\int_{0}^{1}\frac{\theta}{\mu}S_{tt}(vS)_{tt}\dif x
=\int_{0}^{1}\frac{\theta}{\mu}(v_{tt}S+2v_tS_t+vS_{tt})S_{tt}\dif x
\ge\int_{0}^{1}\frac{1}{2\mu}S^2_{tt}\dif x-\frac{C}{\tau}E^{\frac{1}{2}}(t)\mathcal{D}(t).
\end{align*}
Choosing $\epsilon$ small enough, one get
\begin{align*}
\int_{0}^{1}\frac{\tau  \epsilon}{\mu} b(x)S_{ttx}\theta S_{tt}\dif x
&=\frac{\tau  \epsilon}{2\mu}b(x)S^2_{tt}\big|_0^{1}-\frac{\tau  \epsilon}{2\mu} \int_{0}^{1}(2\theta-b(x)\theta_x)S^2_{tt}\dif x\\
&\ge -\frac{1}{8\mu}\int_{0}^{1}\theta S^2_{tt}\dif x-\frac{C}{\tau}E^{\frac{1}{2}}(t)\mathcal{D}(t).
\end{align*}
Therefore, we derive that
\begin{align}\label{u_{tt4}}
\frac{\tau  }{2\mu}\frac{\dif}{\dif t}\int_{0}^{1}\theta S^2_{tt}\dif x+\int_{0}^{1}\frac{1}{4\mu}S^2_{tt}\dif x-\int_{0}^{1}\theta S_{tt} u_{ttx}\dif x\le \frac{C}{\tau}E^{\frac{1}{2}}(t)\mathcal{D}(t).
\end{align}
Thus, combining the inequalities \eqref{u_{tt1}}, \eqref{u_{tt2}}, \eqref{u_{tt3}}, \eqref{u_{tt4}}, we derive that 
\begin{align*}
\frac{\dif}{\dif t} \int_0^1 \tau^2 \left( \frac{\theta}{2} u_{tt}^2+ \frac{R\theta^2}{2v^2} v_{tt}^2+\frac{e_\theta}{2}\theta_{tt}^2+\frac{Z(\theta)}{2} q_{tt}^2 +\frac{\tau}{2\mu} \theta S_{tt}^2 \right) \dif x \\
+\int_0^1 \tau^2 \left(\frac{v}{\kappa (\theta)} q_{tt}^2+\frac{1}{4\mu}  S_{tt}^2 \right) \dif x \le C E^\frac{1}{2}(t) \mathcal D(t).
\end{align*}
Integrating the above inequalities  over $(0, t)$, using \eqref{new-hu2-1}-\eqref{new-hu4-3} and noticing that 
\begin{align*}
\int_0^1 \tau^2 \left( \frac{\theta}{2} u_{tt}^2+ \frac{R\theta^2}{2v^2} v_{tt}^2+\frac{e_\theta}{2}\theta_{tt}^2+\frac{Z(\theta)}{2} q_{tt}^2 +\frac{\tau}{2\mu} \theta S_{tt}^2 \right)    (t=0, x)\dif x
 \le C \|V_0^2\|_{L^2}^2 \le CE_0,
\end{align*}
 we  get the estimate \eqref{ejtt} immediately.
\end{proof}

The next lemma gives the $(x,t)$ mixed second-order estimates.
\begin{lemma}\label{le3.9}
There exists some constant $C$ such that
\begin{align}\label{ejtx}
&\int_{0}^{1}(u^2_{tx}+v^2_{tx}+\theta^2_{tx}+\tau   q^2_{tx}+\tau  S^2_{tx})\dif x+\int_{0}^{t}\int_{0}^{1}(q^2_{tx}+S^2_{tx})\dif x\dif t+\frac{3\tau  \epsilon}{8\mu}\int_{0}^{t}(S^2_{tx}(t,0)+S^2_{tx}(t,1))\dif t\nonumber\\
&\le C\left(E_0+E^\frac{1}{2}(t)\int_{0}^{t}\mathcal{D}(s)\dif s+E^\frac{3}{2}(t) \right).
\end{align}
\end{lemma}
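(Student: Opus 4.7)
The plan is to derive \eqref{ejtx} in parallel with Lemmas~\ref{yijiex} and~\ref{yijiet}, but at the level of mixed $(t,x)$ derivatives. First, I would apply $\partial_t$ to \eqref{x} (equivalently, $\partial_x$ to \eqref{t}) to obtain evolution equations for $(v_{tx},u_{tx},\theta_{tx},q_{tx},S_{tx})$, then test the four nontrivial equations (momentum, energy, heat flux, stress) against the weighted quantities $\theta u_{tx}$, $\theta_{tx}$, $q_{tx}/\kappa(\theta)$, and $\theta S_{tx}/\mu$ respectively, and integrate over $[0,1]$. Each identity produces a time derivative of a weighted $L^2$-square, an interior dissipation piece ($\int (v/\kappa(\theta))q_{tx}^2\,dx$ and $\int S_{tx}^2/(4\mu)\,dx$), a cross-coupling term, and nonlinear remainders bounded by $CE^{1/2}\mathcal{D}+CE^{3/2}$.

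The essential new difficulty is that the test functions no longer vanish on $\partial\Omega$, so every spatial integration by parts generates a boundary contribution. Three observations resolve (or exploit) these contributions. (i) Differentiating $u|_{\partial\Omega}=0$ twice in $t$ gives $u_{tt}|_{\partial\Omega}=0$; substituting this into \eqref{approximate}$_2$ at the boundary yields $(p_x-S_x)_t|_{\partial\Omega}=0$, which kills the boundary term $\theta u_{tx}(p_{tx}-S_{tx})|_0^1$ from the momentum estimate. (ii) Evaluating \eqref{approximate}$_4$ at $\partial\Omega$ and using $q|_{\partial\Omega}=q_t|_{\partial\Omega}=0$ forces $\theta_x|_{\partial\Omega}=0$, hence $\theta_{tx}|_{\partial\Omega}=0$; this annihilates the boundary term $q_{tx}\theta_{tx}|_0^1$ that arises from $\int q_{txx}\theta_{tx}\,dx$ in the energy estimate. (iii) The non-characteristic correction $\tau\epsilon(b(x)S_{tx})_x$ tested against $\theta S_{tx}/\mu$ produces $\frac{\tau\epsilon}{2\mu}b(x)\theta S_{tx}^2|_0^1$; with $b(0)=-1$, $b(1)=1$, and $\theta\geq 3/4$ by \eqref{new-hu2-1}, this is bounded below by $\frac{3\tau\epsilon}{8\mu}(S_{tx}^2(t,0)+S_{tx}^2(t,1))$, supplying the positive boundary dissipation on the left-hand side of \eqref{ejtx}.

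After summing the four identities, the leading cross-coupling terms cancel pairwise: $\int(R\theta/v)u_{txx}\theta_{tx}$ between momentum and energy, $\int\theta u_{txx}S_{tx}$ between momentum and stress, and $\int\theta_{txx}q_{tx}$ between heat flux and energy (the latter obtained from $\int q_{txx}\theta_{tx}\,dx$ after the boundary-term-free integration by parts enabled by $\theta_{tx}|_{\partial\Omega}=0$). This is exactly the ``built-in cancellation mechanism'' mentioned in the introduction; it is indispensable because these cross terms carry third-order derivatives $u_{txx},\theta_{txx},q_{txx}$ that belong to neither $E$ nor $\mathcal{D}$. The surviving remainders split into (a) a quadratic piece bounded by $CE^{1/2}(t)\mathcal{D}(t)$ coming from products of an $L^\infty$-small first-order quantity with two $\mathcal{D}$-quantities, and (b) a cubic piece that yields $CE^{3/2}(t)$ after time integration, arising from triple products where all three factors live in $E$ rather than $\mathcal{D}$. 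Integrating the combined estimate from $0$ to $t$ and bounding the initial energy by $\|V_0^1\|_{H^1}^2\leq CE_0$ yields \eqref{ejtx}.

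The main obstacle is the simultaneous bookkeeping of the boundary analysis and the cancellation of the interior cross terms. Without the identities $(p_x-S_x)_t|_{\partial\Omega}=0$ and $\theta_{tx}|_{\partial\Omega}=0$, the boundary contributions $\theta u_{tx}(p_{tx}-S_{tx})|_0^1$ and $q_{tx}\theta_{tx}|_0^1$ would persist with no available $L^2$- or trace bound; and without the pairwise sign cancellation among $\int(R\theta/v)u_{txx}\theta_{tx}$, $\int\theta u_{txx}S_{tx}$, $\int\theta_{txx}q_{tx}$, uncontrollable third-order interior derivatives would remain and the scheme would lose regularity at the characteristic boundary.
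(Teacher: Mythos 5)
Your plan follows essentially the same route as the paper: differentiate \eqref{approximate} in $t$ and $x$, test the resulting system against $\theta u_{tx}$, $\theta_{tx}$, $q_{tx}/\kappa(\theta)$ and $\theta S_{tx}/\mu$, kill the boundary terms via $u_{tt}|_{\partial\Omega}=0\Rightarrow(p_{tx}-S_{tx})|_{\partial\Omega}=0$ and $q|_{\partial\Omega}=q_t|_{\partial\Omega}=0\Rightarrow(\theta_x,\theta_{tx})|_{\partial\Omega}=0$, keep the positive boundary dissipation $\frac{3\tau\epsilon}{8\mu}(S_{tx}^2(t,0)+S_{tx}^2(t,1))$ from the $b(x)$-correction, and cancel the three third-order cross terms pairwise. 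All of this matches the paper's proof.

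One step is under-specified and, as literally stated, would fail. After the first integration by parts in the momentum estimate, $p(v,\theta)_{tx}$ contributes not only $p_vv_{tx}+p_\theta\theta_{tx}$ but also the quadratic remainder $A_1:=p_{vv}v_xv_t+p_{v\theta}v_t\theta_x+p_{v\theta}v_x\theta_t$, so a term $-\int_0^1\theta\,u_{txx}A_1\,\dif x$ survives that is \emph{not} among your three canceling cross terms and still carries the uncontrolled third derivative $u_{txx}$. Your dichotomy (a)/(b) does not cover it: it is cubic but one factor lies in neither $E$ nor $\mathcal{D}$, and a direct spacetime integration of a ``triple product of $E$-factors'' would only give $t\,E^{3/2}(t)$, not $CE^{3/2}(t)$. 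The paper's fix is to write $u_{txx}=\partial_t u_{xx}$ and pull out a total time derivative, $-\int_0^1\theta u_{txx}A_1\,\dif x=-\frac{\dif}{\dif t}\int_0^1 u_{xx}\theta A_1\,\dif x+\int_0^1 u_{xx}\bigl(\theta_tA_1+\theta(A_1)_t\bigr)\dif x$, where the second integral is $\le CE^{1/2}(t)\mathcal{D}(t)$ (since $u_{xx},\theta_{tt},v_{tt}\in\mathcal{D}$) and the first, after integrating in time, evaluates to $\int_0^1 u_{xx}\theta A_1\,\dif x\big|_0^t\le CE^{3/2}(t)+CE_0$; this is the actual and only source of the $E^{3/2}(t)$ term in \eqref{ejtx}. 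With that mechanism made explicit, your argument coincides with the paper's.
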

	
\begin{proof}
Taking derivatives to the equations \eqref{approximate} with respect to $t$ and $x$, respectively, we get
\begin{align}\label{tx}
\begin{cases}
v_{ttx}=u_{txx},\\
u_{ttx}+{p(v,\theta)}_{txx}=S_{txx},\\
(e_{\theta}\theta_t)_{tx}-(\frac{2a(\theta)}{Z(\theta)}q\theta_x)_{tx}+(\frac{R\theta u_x}{v})_{tx}+q_{txx}=(\frac{2a(\theta)}{\tau   (\theta)}q^2v)_{tx}+(\frac{S^2v}{\mu})_{tx},\\
\left(\tau   (\theta)q_t\right)_{tx}+(vq)_{tx}+(\kappa(\theta)\theta_x)_{tx}=0,\\
\tau  S_{ttx}+(vS)_{tx}+\tau  \epsilon(b(x)S_x)_{tx}=\mu u_{txx}.
\end{cases}
\end{align} 
Multiplying the equation $\eqref{tx}_2$ by $\theta u_{tx}$, we get
$$\int_{0}^{1} \theta u_{ttx}u_{tx}\dif x+\int_{0}^{1} \theta \left(p(v,\theta)_{txx}-S_{txx}\right)u_{tx}\dif x=0,$$
where
\begin{align*}
\int_{0}^{1} \theta u_{ttx}u_{tx}\dif x=\frac{1}{2}\frac{\dif}{\dif t}\int_{0}^{1}\theta u^2_{tx}\dif x-\frac{1}{2}\int_{0}^{1}\theta_tu^2_{tx}\dif x\ge\frac{1}{2}\frac{\dif}{\dif t}\int_{0}^{1}\theta u^2_{tx}\dif x-CE^{\frac{1}{2}}(t)\mathcal{D}(t).
\end{align*}
Note that $u_{tt}(t,0)=u_{tt}(t,1)=0$, the momentum equation $\eqref{approximate}_2$ implies 
$$\left(p(v,\theta)_{tx}-S_{tx}\right)\Big|_{\partial \Omega}=0.
$$
Thus, it follows
\begin{align*}
&\int_{0}^{1} \theta \left(p(v,\theta)_{txx}-S_{txx}\right)u_{tx}\dif x\\
=&\theta \left(p(v,\theta)_{tx}-S_{tx}\right)u_{tx}\big|_{0}^{1}-\int_{0}^{1} (\theta u_{tx})_x\left(p(v,\theta)_{tx}-S_{tx}\right)\dif x\\
=&-\int_{0}^{1}(\theta_xu_{tx}+\theta u_{txx})\left(p_{vv}v_xv_t+p_{v\theta}v_t\theta_x+ p_{v\theta}v_x\theta_t+p_vv_{tx}+p_\theta\theta_{tx}-S_{tx}\right)\dif x\\
\ge&-\int_{0}^{1}\theta u_{txx}\left(p_{vv}v_xv_t+p_{v\theta}v_t\theta_x+ p_{v\theta}v_x\theta_t+p_vv_{tx}+p_\theta\theta_{tx}-S_{tx}\right)\dif x-CE^{\frac{1}{2}}(t)\mathcal{D}(t)\\
=&-\int_{0}^{1}\theta p_v v_{ttx}v_{tx}\dif x-\int_{0}^{1}\frac{R\theta}{v} u_{txx}\theta_{tx}\dif x+\int_{0}^{1}\theta u_{txx}S_{tx}\dif x-\frac{\dif}{\dif t}\int_{0}^{1}u_{xx}\theta A_1(t,x)\dif x\\
&+\int_{0}^{1}u_{xx}\left(\theta_tA_1(t,x)+\theta (A_1(t,x))_t\right)\dif x-CE^{\frac{1}{2}}(t)\mathcal{D}(t).
\end{align*}
Here $A_1(t,x):=p_{vv}v_xv_t+p_{v\theta}v_t\theta_x+ p_{v\theta}v_x\theta_t$. Notice that,
\begin{align*}
\int_0^1 u_{xx} \theta_t A_1(t,x) \dif x \le C E^{\frac{1}{2}}(t)\mathcal{D}(t)
\end{align*}
and
\begin{align*}
&\int_{0}^{1}u_{xx}\theta (A_1(t,x))_t\dif x\\
=&\int_{0}^{1}u_{xx}\theta(p_{vvv}v_xv^2_t+2p_{vv\theta}v_xv_t\theta_t+p_{vv\theta}v^2_t\theta_x+p_{vv}v_{tx}v_t+p_{vv}v_{tt}v_x+p_{v\theta}v_{tt}\theta_x+p_{v\theta}v_t\theta_{tx}\\
&+p_{v\theta}v_{tx}\theta_t+p_{v\theta}v_x\theta_{tt})\dif x\ge-CE^{\frac{1}{2}}(t)\mathcal{D}(t).
\end{align*}
Similarly,
\begin{align*}
-\int_{0}^{1}\theta p_vv_{ttx}v_{tx}\dif x
&=-\frac{1}{2}\frac{\dif}{\dif t}\int_{0}^{1}\theta p_vv^2_{tx}\dif x+\frac{1}{2}\int_{0}^{1}\{\theta_tp_v+\theta(p_{vv}v_t+p_{v\theta}\theta_t)\}v^2_{tx}\dif x\\
&\ge-\frac{1}{2}\frac{\dif}{\dif t}\int_{0}^{1}\theta p_vv^2_{tx}\dif x-CE^{\frac{1}{2}}(t)\mathcal{D}(t).
\end{align*}
Combining the above estimates, we derive that
\begin{align}\label{u_{tx1}}
\frac{1}{2}\frac{\dif}{\dif t}\int_{0}^{1}\theta u^2_{tx}+ \frac{R\theta^2}{v^2}v^2_{tx}\dif x+\int_{0}^{1}\theta u_{txx}S_{tx}\dif x-\int_{0}^{1}\frac{R\theta}{v} u_{txx}\theta_{tx}\dif x\nonumber\\
\le\frac{\dif}{\dif t}\int_{0}^{1}u_{xx}\theta A_1(t,x)\dif x+CE^{\frac{1}{2}}(t)\mathcal{D}(t).
\end{align}
Multiplying the equation $\eqref{tx}_3$ by $\theta_{tx}$, we get
\begin{align}
&\int_{0}^{1}(e_{\theta}\theta_t)_{tx}\theta_{tx}\dif x+\int_{0}^{1}\left(\frac{R\theta u_x}{v}\right)_{tx}\theta_{tx}\dif x-\int_{0}^{1}\left(\frac{2a(\theta)}{Z(\theta)}q\theta_x\right)_{tx}\theta_{tx}\dif x+\int_{0}^{1}q_{txx}\theta_{tx}\dif x \nonumber \\
&=\int_{0}^{1}\left(\frac{2a(\theta)}{\tau   (\theta)}q^2v\right)_{tx}\theta_{tx}\dif x+\int_{0}^{1}\left(\frac{S^2v}{\mu}\right)_{tx}\theta_{tx}\dif x. \label{new-hu2-3}
\end{align}
First,  using the fact that $|e_{\theta q}| \le C \tau q$ and  $|e_{\theta qq}| \le C \tau$, we have
\begin{align*}
&\int_{0}^{1}(e_{\theta}\theta_t)_{tx}\theta_{tx}\dif x\\
=&\int_{0}^{1}(e_{\theta\theta\theta}\theta^2_t\theta_x+e_{\theta\theta q}\theta^2_tq_x+e_{\theta\theta q}\theta_t\theta_xq_t+e_{\theta qq}\theta_tq_tq_x+2e_{\theta\theta}\theta_t\theta_{tx}+e_{\theta\theta}\theta_x\theta_{tt}\\
&+e_{\theta q}\theta_tq_{tx}+e_{\theta q}\theta_{tx}q_t+e_{\theta q}\theta_{tt}q_x+e_{\theta}\theta_{ttx})\theta_{tx}\dif x\\
\ge& \int_{0}^{1}e_{\theta}\theta_{ttx}\theta_{tx}\dif x-CE^{\frac{1}{2}}(t)\mathcal{D}(t)\\
=&\frac{1}{2}\frac{\dif}{\dif t}\int_{0}^{1}e_{\theta}\theta^2_{tx}\dif x-\frac{1}{2}\int_{0}^{1}\left(e_{\theta\theta}\theta_t+e_{\theta q}q_t\right)\theta^2_{tx}\dif x-CE^{\frac{1}{2}}(t)\mathcal{D}(t)\\
\ge&\frac{1}{2}\frac{\dif}{\dif t}\int_{0}^{1}e_{\theta}\theta^2_{tx}\dif x-CE^{\frac{1}{2}}(t)\mathcal{D}(t),
\end{align*}
and
\begin{align*}
&\int_{0}^{1}\left(\frac{R\theta u_x}{v}\right)_{tx}\theta_{tx}\dif x\\
=&\int_{0}^{1}R(\frac{\theta_{tx}u_x}{v}+\frac{\theta_tu_{xx}}{v}-\frac{\theta_tu_xv_x}{v^2}+\frac{\theta_xu_{tx}}{v}-\frac{\theta u_{xt}v_x}{v^2}-\frac{\theta_xu_xv_t}{v^2}-\frac{\theta u_{xx}v_t}{v^2}-\frac{\theta u_xv_{tx}}{v^2}\\
&+\frac{2\theta u_xv_tv_x}{v^3}+\frac{\theta u_{txx}}{v})\theta_{tx}\dif x\\
\ge&\int_{0}^{1}\frac{R\theta}{v}u_{txx}\theta_{tx}\dif x-CE^{\frac{1}{2}}(t)\mathcal{D}(t).
\end{align*}
Second, one has
\begin{align*}
&-\int_{0}^{1}\left(\frac{2a(\theta)}{Z(\theta)}q\theta_x\right)_{tx}\theta_{tx}\dif x\\
=&-\left(\frac{2a^{\prime\prime}(\theta)}{Z(\theta)}\theta^2_xq\theta_t-\frac{4a^\prime(\theta)Z^\prime(\theta)}{Z^2(\theta)}\theta^2_xq\theta_t+\frac{2a^{\prime}(\theta)}{Z(\theta)}\theta_tq_x\theta_x+\frac{4a^{\prime}(\theta)}{Z(\theta)}\theta_{tx}q\theta_x+\frac{2a^{\prime}(\theta)}{Z(\theta)}\theta_tq\theta_{xx}\right.\\
&-\frac{2a(\theta)Z^{\prime\prime}(\theta)}{Z^2(\theta)}\theta^2_xq\theta_t+\frac{4a(\theta){Z^\prime(\theta)}^2}{Z^3(\theta)}\theta^2_xq\theta_t-\frac{2a(\theta)Z^\prime(\theta)}{Z^2(\theta)}\theta_t\theta_xq_x-\frac{4a(\theta)Z^\prime(\theta)}{Z^2(\theta)}q\theta_x\theta_{tx}\\
&-\frac{2a(\theta)Z^\prime(\theta)}{Z^2(\theta)}q\theta_t\theta_{xx}+\frac{2a^\prime(\theta)}{Z(\theta)}q_t\theta^2_x-\frac{2a(\theta)Z^\prime(\theta)}{Z^2(\theta)}q_t\theta^2_x+\frac{2a(\theta)}{Z(\theta)}q_t\theta_{xx}+\frac{2a(\theta)}{Z(\theta)}q_{tx}\theta_x\\
&\left.+\frac{2a(\theta)}{Z(\theta)}q_x\theta_{tx}+\frac{2a(\theta)}{Z(\theta)}q\theta_{txx}\right)\theta_{tx}\dif x\\
\ge&-\int_{0}^{1}\frac{2a(\theta)}{Z(\theta)}q\theta_{txx}\theta_{tx}\dif x-CE^{\frac{1}{2}}(t)\mathcal{D}(t)\\
=&-\frac{a(\theta)}{Z(\theta)}q\theta^2_{tx}\big|_{0}^{1}+\int_{0}^{1}\theta^2_{tx}\left(\frac{a^\prime(\theta)}{Z(\theta)}\theta_xq-\frac{a(\theta)Z^\prime(\theta)}{Z^2(\theta)}\theta_xq+\frac{a(\theta)}{Z(\theta)}q_x\right)\dif x-CE^{\frac{1}{2}}(t)\mathcal{D}(t)\\
\ge&-CE^{\frac{1}{2}}(t)\mathcal{D}(t).
\end{align*}
For the right-hand side of the  equation \eqref{new-hu-3}, we have
\begin{align*}
&\int_{0}^{1}\left(\frac{2a(\theta)}{\tau   (\theta)}q^2v\right)_{tx}\theta_{tx}\dif x\\
=&\left\{\frac{2a^{\prime\prime}(\theta)}{\tau   (\theta)}\theta_t\theta_xq^2v-\frac{4a^\prime(\theta)\tau   ^\prime(\theta)}{\tau   ^2(\theta)}\theta_t\theta_xq^2v+\frac{2a^{\prime}(\theta)}{\tau   (\theta)}\theta_{tx}q^2v+\frac{2a^{\prime}(\theta)}{\tau   (\theta)}2\theta_tqq_xv \right.\\
&-\frac{2a(\theta)\tau   ^{\prime\prime}(\theta)}{\tau   ^2(\theta)}\theta_t\theta_xq^2v+\frac{4a(\theta){\tau   ^\prime(\theta)}^2}{\tau   ^3(\theta)}\theta_t\theta_xq^2v-\frac{2a(\theta)\tau   ^\prime(\theta)}{\tau   ^2(\theta)}\theta_{tx}q^2v-\frac{2a(\theta)\tau   ^\prime(\theta)}{\tau   ^2(\theta)}2\theta_tqq_xv\\
&-\frac{2a(\theta)\tau   ^\prime(\theta)}{\tau   ^2(\theta)}\theta_tq^2v_x+\frac{2a^{\prime}(\theta)}{\tau   (\theta)}2\theta_xqq_tv-\frac{2a(\theta)\tau   ^\prime(\theta)}{\tau   ^2(\theta)}2qq_t\theta_xv+\frac{4a(\theta)}{\tau   (\theta)}2q_xq_tv+\frac{2a(\theta)}{\tau   (\theta)}2qq_{tx}v\\
&\left.+\frac{2a(\theta)}{\tau   (\theta)}2qq_tv_x+\frac{2a^{\prime}(\theta)}{\tau   (\theta)}\theta_xq^2v_t-\frac{2a(\theta)\tau   ^\prime(\theta)}{\tau   ^2(\theta)}\theta_xq^2v_t+\frac{2a(\theta)}{\tau   (\theta)}q^2v_{tx}\right\}\theta_{tx}\dif x\\
\le& CE^{\frac{1}{2}}(t)\mathcal{D}(t),
\end{align*}
and
\begin{align*}
\int_{0}^{1}\left(\frac{S^2v}{\mu}\right)_{tx}\theta_{tx}\dif x
&=\int_{0}^{1}\frac{1}{\mu}\left(2S_tS_xv+2SS_{tx}v+2SS_tv_x+2SS_xv_t+S^2v_{tx}\right)\theta_{tx}\dif x
\le CE^{\frac{1}{2}}(t)\mathcal{D}(t).
\end{align*}
Combining the above estimates, we derive that
\begin{align}\label{u_{tx2}}
\frac{1}{2}\frac{\dif}{\dif t}\int_{0}^{1}e_{\theta}\theta^2_{tx}\dif x+\int_{0}^{1}\frac{R\theta}{v}u_{txx}\theta_{tx}\dif x+\int_{0}^{1}q_{txx}\theta_{tx}\dif x\le CE^{\frac{1}{2}}(t)\mathcal{D}(t).
\end{align}
Multiplying the equation $\eqref{tx}_4$ by $\frac{q_{tx}}{\kappa(\theta)}$, we get
$$\int_{0}^{1}\left(\tau   (\theta)q_t\right)_{tx}\frac{q_{tx}}{\kappa(\theta)}\dif x+\int_{0}^{1}(vq+\kappa(\theta)\theta_x)_{tx}\frac{q_{tx}}{\kappa(\theta)}\dif x=0.$$
We estimate each term of the above equation separately. For the left-hand side of the above equation,  
use the fact that 
$$
\left\|\frac{\tau   ^\prime(\theta) }{\kappa (\theta)} \theta_x q_{tt} q_{tx}\right\|_{L^1}\le C \|\theta_x\|_{L^\infty} \|\tau q_{tt}\|_{L^2} \|q_{tx}\|_{L^2}\le C E^{\frac{1}{2}}(t)\mathcal{D}(t),
$$
we have
\begin{align*}
&\int_{0}^{1}\left(\tau   (\theta)q_t\right)_{tx}\frac{q_{tx}}{\kappa(\theta)}\dif x\\	&=\int_{0}^{1}\frac{1}{\kappa(\theta)}\left(\tau   ^{\prime\prime}(\theta)\theta_t\theta_xq_t+\tau   ^\prime(\theta)\theta_{tx}q_t+\tau   ^\prime(\theta)\theta_tq_{tx}+\tau   ^\prime(\theta)\theta_{tx}q_t+\tau   ^\prime(\theta)\theta_xq_{tt}+\tau   (\theta)q_{ttx}\right)q_{tx}\dif x\\
&\ge\int_{0}^{1}Z(\theta)q_{ttx}q_{tx}\dif x-CE^{\frac{1}{2}}(t)\mathcal{D}(t)\\
&=\frac{1}{2}\frac{\dif}{\dif t}\int_{0}^{1}Z(\theta)q^2_{tx}\dif x-\int_{0}^{1}Z^\prime(\theta)\theta_tq^2_{tx}\dif x-CE^{\frac{1}{2}}(t)\mathcal{D}(t)\\
&\ge\frac{1}{2}\frac{\dif}{\dif t}\int_{0}^{1}Z(\theta)q^2_{tx}\dif x-CE^{\frac{1}{2}}(t)\mathcal{D}(t).
\end{align*}

Note that  the constitutive equation $\eqref{approximate}_4$ implies that $(\theta_x, \theta_{xt})\Big|_{\partial \Omega}=0$.   Then, it follows that
\begin{align*}
&\int_{0}^{1}(vq+\kappa(\theta)\theta_x)_{tx}\frac{q_{tx}}{\kappa(\theta)}\dif x\\
=&\frac{q_{tx}}{\kappa(\theta)}(vq+\kappa(\theta)\theta_x)_t\big|_{0}^{1}-\int_{0}^{1}\left(v_tq+vq_t+\kappa^\prime(\theta)\theta_t\theta_x+\kappa(\theta)\theta_{tx}\right)\left(\frac{1}{\kappa(\theta)}q_{txx}-\frac{\kappa^\prime(\theta)}{\kappa^2(\theta)}q_{tx}\theta_x\right)\dif x\\
\ge&-\frac{1}{\kappa(\theta)}q_{tx}\left(v_tq+vq_t+\kappa^\prime(\theta)\theta_t\theta_x\right)\big|_{0}^{1}+\int_{0}^{1}\left(\frac{v_tq+vq_t+\kappa^\prime(\theta)\theta_t\theta_x}{\kappa(\theta)}\right)_xq_{tx}\dif x-\int_{0}^{1}q_{txx}\theta_{tx}\dif x\\
&-CE^{\frac{1}{2}}(t)\mathcal{D}(t)\\
=&\int_{0}^{1}\left(\frac{v_tq+vq_t+\kappa^\prime(\theta)\theta_t\theta_x}{\kappa(\theta)}\right)_xq_{tx}\dif x-\int_{0}^{1}q_{txx}\theta_{tx}\dif x-CE^{\frac{1}{2}}(t)\mathcal{D}(t)\\
=&-\int_{0}^{1}q_{txx}\theta_{tx}\dif x+\int_{0}^{1}\left(\frac{v_{tx}q+v_tq_x+v_xq_t+vq_{tx}+\kappa^{\prime\prime}(\theta)\theta_t\theta^2_x+\kappa^\prime(\theta)\theta_{tx}\theta_x+\kappa^\prime(\theta)\theta_t\theta_{xx}}{\kappa(\theta)}\right.\\
&\left.-\frac{\kappa^\prime(\theta)}{\kappa^2(\theta)}\theta_x\left(v_tq+vq_t+\kappa^\prime(\theta)\theta_t\theta_x\right)\right)q_{tx}\dif x-CE^{\frac{1}{2}}(t)\mathcal{D}(t)\\
\ge&\int_{0}^{1}\frac{v}{\kappa(\theta)}q^2_{tx}\dif x-\int_{0}^{1}q_{txx}\theta_{tx}\dif x-CE^{\frac{1}{2}}(t)\mathcal{D}(t).
\end{align*}
Overall, we have
\begin{align}\label{u_{tx3}}
\frac{1}{2}\frac{\dif}{\dif t}\int_{0}^{1}Z(\theta)q^2_{tx}\dif x+\int_{0}^{1}\frac{v}{\kappa(\theta)}q^2_{tx}\dif x-\int_{0}^{1}q_{txx}\theta_{tx}\dif x
\le CE^{\frac{1}{2}}(t)\mathcal{D}(t).
\end{align}
Multiplying the equation $\eqref{tt}_5$ by $\frac{\theta}{\mu}S_{tx}$, we get
\begin{align*}
\int_{0}^{1}\frac{\theta}{\mu}S_{tx}\tau  S_{ttx}\dif x+\int_{0}^{1}\frac{\theta}{\mu}S_{tx}(vS)_{tx}\dif x-\int_{0}^{1}\theta S_{tx} u_{txx}\dif x+\frac{\tau  \epsilon}{\mu}\int_{0}^{1}(b(x)S_x)_{tx}\theta S_{tx}\dif x =0. 
\end{align*}
First, we have
\begin{align*}
\int_{0}^{1}\frac{\theta}{\mu}S_{tx}\tau  S_{ttx}\dif x
=\frac{1}{2}\frac{\dif}{\dif t}\int_{0}^{1}\frac{\tau  }{\mu}\theta S^2_{tx}\dif x-\frac{1}{2}\int_{0}^{1}\frac{\tau  }{\mu}\theta_tS^2_{tx}\dif x
\ge\frac{1}{2}\frac{\dif}{\dif t}\int_{0}^{1}\frac{\tau  }{\mu}\theta S^2_{tx}\dif x-CE^{\frac{1}{2}}(t)\mathcal{D}(t),
\end{align*}
and
\begin{align*}
\int_{0}^{1}\frac{\theta}{\mu}S_{tx}(vS)_{tx}\dif x
=\frac{1}{\mu}\int_{0}^{1}\theta\left(v_{tx}S+v_tS_x+v_xS_t+vS_{tx}\right)S_{tx}\dif x
\ge \frac{1}{2\mu}\int_{0}^{1}S^2_{tx}\dif x-CE^{\frac{1}{2}}(t)\mathcal{D}(t).
\end{align*}
Similarly, one has
\begin{align*}
\frac{\tau  \epsilon}{\mu}\int_{0}^{1}(b(x)S_x)_{tx}\theta S_{tx}\dif x 
&=\frac{\tau  \epsilon}{\mu}\int_{0}^{1}(2S_{tx}+b(x)S_{txx})\theta S_{tx}\dif x \\
&=\frac{2\tau  \epsilon}{\mu}\int_{0}^{1}\theta S^2_{tx}\dif x+\frac{\tau  \epsilon}{2\mu}b(x)\theta S^2_{tx}\big|_{0}^{1}-\frac{\tau  \epsilon}{2\mu}\int_{0}^{1}(2\theta+b(x)\theta_x)S^2_{tx}\dif x\\
&\ge \frac{\tau  \epsilon}{\mu}\int_{0}^{1}\theta S^2_{tx}\dif x+\frac{3\tau  \epsilon}{8\mu}(S^2_{tx}(t,0)+S^2_{tx}(t,1))
-CE^{\frac{1}{2}}(t)\mathcal{D}(t).
\end{align*}
Therefore, we derive that
\begin{align}\label{u_{tx4}}
\frac{1}{2}\frac{\dif}{\dif t}\int_{0}^{1}\frac{\tau  }{\mu}\theta S^2_{tx}\dif x+\frac{1}{4\mu}\int_{0}^{1}S^2_{tx}\dif x-\int_{0}^{1}\theta S_{tx} u_{txx}\dif x+\frac{3\tau  \epsilon}{8\mu}(S^2_{tx}(t,0)+S^2_{tx}(t,1))\nonumber\\
\le CE^{\frac{1}{2}}(t)\mathcal{D}(t).
\end{align}
Thus, combining the estimates \eqref{u_{tx1}}-\eqref{u_{tx4}}, we get
\begin{align*}
\frac{\dif}{\dif t} \int_0^1  \left( \frac{\theta}{2} u_{tx}^2+\frac{R\theta^2}{v^2} v_{tx}^2+\frac{e_\theta}{2} \theta_{tx}^2+\frac{Z(\theta)}{2} q_{tx}^2 +\frac{\tau \theta}{2\mu}  S_{tx}^2\right) \dif x 
+\int_0^1 \left(\frac{v}{\kappa (\theta)} q_{tx}^2+\frac{1}{4\mu} S_{tx}^2 \right)\dif x \\
+\frac{3\tau  \epsilon}{8\mu}(S^2_{tx}(t,0)+S^2_{tx}(t,1)) \le \frac{\dif }{\dif t} \int_0^1 u_{xx} \theta A_1(t,x) \dif x +CE^{\frac{1}{2}}(t)\mathcal{D}(t).
\end{align*}
Integrating the above inequality over $(0, t)$, using \eqref{new-hu2-1}-\eqref{new-hu4-3} and noticing that 
\begin{align*}
\int_0^1  \left( \frac{\theta}{2} u_{tx}^2+\frac{R\theta^2}{v^2} v_{tx}^2+\frac{e_\theta}{2} \theta_{tx}^2+\frac{Z(\theta)}{2} q_{tx}^2 +\frac{\tau \theta}{2\mu}  S_{tx}^2\right)  (t=0, x)\dif x \le C \|V_0^1\|_{H^1}^2 \le CE_0
\end{align*}
and
\begin{align*}
 \int_0^1 u_{xx} \theta A_1(t,x) \dif x\le C E^\frac{3}{2}(t).
 \end{align*}
The inequality  \eqref{ejtx} follows immediately.
\end{proof}

With the Lemmas \ref{le3.7}, \ref{le3.8} and \ref{le3.9}, using the equations \eqref{approximate}, we have the following corollary.

\begin{corollary}\label{co3.10}
There exists some constant $C$ such that
\begin{align}\label{ejhs}
&\int_{0}^{t}\int_{0}^{1}(u^2_{xx}+u^2_{tx}+u^2_{tt}+v^2_{tx}+v^2_{tt}+\theta^2_{xx}+\theta^2_{tx}+\theta^2_{tt})\dif x\dif t\nonumber\\
&\le C\left(\epsilon\int_{0}^{t}\int_{0}^{1}S_{xx}^2\dif x\dif t+E_0+E^\frac{1}{2}\int_{0}^{t}\mathcal{D}(s)\dif s+E^\frac{3}{2}(t)\right).
\end{align}
\end{corollary}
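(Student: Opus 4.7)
The idea is to read each missing second-order derivative off the equations \eqref{approximate} algebraically and then bound the right-hand side using the $L^2_{tx}$ controls already supplied by Lemmas \ref{le3.7}, \ref{le3.8}, and \ref{le3.9}, absorbing nonlinear remainders into $E^{1/2}\int_0^t \mathcal{D}\,\dif s + E^{3/2}(t)$.

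First, the continuity equation $v_t=u_x$ gives pointwise identities $v_{tx}=u_{xx}$ and $v_{tt}=u_{tx}$, so $\int_0^t\int_0^1 v_{tt}^2$ is already controlled by Lemma \ref{le3.9}, and $\int_0^t\int_0^1 v_{tx}^2$ reduces to bounding $\int_0^t\int_0^1 u_{xx}^2$. For the latter, solve the Maxwell law $\eqref{approximate}_5$ for $u_x$ and differentiate in $x$ to obtain
\begin{align*}
\mu u_{xx}=\tau S_{tx}+\tau\epsilon\bigl(2 S_x+b(x)S_{xx}\bigr)+v_x S+v S_x.
\end{align*}
Squaring and integrating, $\tau^2 S_{tx}^2$ is controlled by Lemma \ref{le3.9}, $S_x^2$ by Lemma \ref{yijiex}, the product $(v_x S + v S_x)^2$ is quadratic in small quantities and is absorbed into $E^{1/2}\int_0^t\mathcal{D}$, and the only genuinely new term $\tau^2\epsilon^2 b(x)^2 S_{xx}^2\le\epsilon S_{xx}^2$ (using $\tau\le1$ and $\epsilon\ll 1$) is exactly the price appearing on the right-hand side of \eqref{ejhs}.

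Next, differentiating the momentum equation $\eqref{approximate}_2$ in $t$ gives $u_{tt}=-p_{tx}+S_{tx}$ with $p_{tx}=p_v v_{tx}+p_\theta\theta_{tx}+(\text{nonlinear})$, so $\int_0^t\int_0^1 u_{tt}^2$ is controlled by $\int_0^t\int_0^1(v_{tx}^2+\theta_{tx}^2+S_{tx}^2)\,\dif x\,\dif s$, which we have just handled or is part of Lemma \ref{le3.9}. For $\theta_{xx}$, solve the Cattaneo law $\eqref{approximate}_4$ for $\kappa(\theta)\theta_x=-\tau(\theta)q_t-vq$ and differentiate in $x$:
\begin{align*}
\kappa(\theta)\theta_{xx}=-\tau(\theta)q_{tx}-v q_x -v_x q-\tau'(\theta)\theta_x q_t-\kappa'(\theta)\theta_x^2.
\end{align*}
The linear pieces $\tau q_{tx}$ and $q_x$ are integrable in time by Lemmas \ref{le3.9} and \ref{yijiex}; the three remaining products are quadratic in small quantities and contribute at most $E^{1/2}\int_0^t\mathcal{D}+E^{3/2}$. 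Finally, differentiating $\eqref{approximate}_3$ in $t$ and using $e_\theta\ge C_v/2$ solves for $\theta_{tt}$ in terms of $u_{tx}$, $q_{tx}$, $\theta_{tx}$, and quadratic nonlinearities in $(\theta_t, q_t, S, q, \theta_x)$, each of which is controlled as before.

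The main obstacle is the $u_{xx}$ step: the artificial term $\tau\epsilon b(x)S_x$ in $\eqref{approximate}_5$ creates an $S_{xx}$ contribution that cannot be absorbed by the previous $L^2$ dissipation of $S_x$, which is precisely why the factor $\epsilon\int_0^t\int_0^1 S_{xx}^2\,\dif x\,\dif s$ is allowed on the right-hand side of \eqref{ejhs}; it will be controlled at the next stage where a weighted estimate for $S_{xx}$ is derived, and the prefactor $\epsilon$ ensures that this term can be absorbed after integration by $\epsilon$ chosen sufficiently small. Once this single term is handled, the rest of the derivatives cascade algebraically through the system.
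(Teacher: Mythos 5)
Your overall strategy is the same as the paper's: read each second-order derivative off the equations algebraically and bound the right-hand sides by quantities whose space-time integrals are already controlled, accepting the single new term $\epsilon\int_0^t\int_0^1 S_{xx}^2$. Your treatment of $u_{xx}$, $v_{tx}$, $\theta_{xx}$, $u_{tt}$ and $\theta_{tt}$ matches the paper's proof.

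However, there is a genuine gap in how you dispose of $u_{tx}$, $v_{tt}$ and $\theta_{tx}$. You assert that $\int_0^t\int_0^1 v_{tt}^2 = \int_0^t\int_0^1 u_{tx}^2$ is ``already controlled by Lemma \ref{le3.9}'', and later that $\theta_{tx}$ is ``part of Lemma \ref{le3.9}''. This confuses the two halves of that lemma: its time-integrated dissipation only covers $q_{tx}^2$ and $S_{tx}^2$; for $u_{tx}$, $v_{tx}$ and $\theta_{tx}$ the lemma controls only $\sup_s\int_0^1(\cdot)^2\,\dif x$. A pointwise-in-time energy bound does not yield a bound on $\int_0^t\int_0^1 u_{tx}^2\,\dif x\,\dif s$ that is uniform in $t$ (the trivial estimate grows like $t$), so as written the terms $u_{tx}^2$, $v_{tt}^2$ and $\theta_{tx}^2$ on the left of \eqref{ejhs} — and, through them, your bounds for $u_{tt}$ and $\theta_{tt}$, which rely on $\theta_{tx}$ and $u_{tx}$ respectively — are not justified. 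The fix is exactly the technique you already use for the $x$-derivatives, applied to the $t$-derivatives of the constitutive laws: from $\partial_t\eqref{approximate}_5$, $\mu u_{tx}=\tau S_{tt}+\tau\epsilon b(x)S_{tx}+(vS)_t$, so $\int_0^t\int_0^1(u_{tx}^2+v_{tt}^2)$ is controlled by the dissipation of $\tau^2 S_{tt}^2$ (Lemma \ref{le3.8}), $S_{tx}^2$ (Lemma \ref{le3.9}), $S_t^2$ and $v_t^2$ (Lemmas \ref{yijiet} and \ref{yjhsth}); similarly, from $\partial_t\eqref{approximate}_4$, $\kappa(\theta)\theta_{tx}=-\tau(\theta)q_{tt}-(vq)_t-\tau'(\theta)\theta_t q_t-\kappa'(\theta)\theta_t\theta_x$ gives $\int_0^t\int_0^1\theta_{tx}^2\lesssim\int_0^t\int_0^1(\tau^2 q_{tt}^2+q_t^2+v_t^2)+\cdots$, all of which are available. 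With these two substitutions your argument closes and coincides with the paper's proof.
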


\begin{proof}
From $\eqref{approximate}_5$, we can get 
\begin{align*}
\int_{0}^{t}\int_{0}^{1} (u^2_{tx}+v^2_{tt})\dif x\dif t &\le C \int_{0}^{t}\int_{0}^{1}(\tau^2(S_{tt}^2+\epsilon S^2_{tx})+S_t^2+v_t^2)\dif x\dif t\\
&\le C\left(E_0+E^\frac{1}{2}(t)\int_{0}^{t}\mathcal{D}(s)\dif s+E^\frac{3}{2}(t)\right).
\end{align*}
and
\begin{align*}
\int_{0}^{t}\int_{0}^{1} (u^2_{xx}+v^2_{tx})\dif x\dif t & \le C \int_{0}^{t}\int_{0}^{1}(\tau^2(S_{tx}^2+\epsilon S^2_{xx})+S_x^2+v_x^2)\dif x\dif t\\
&\le C\left(\epsilon\int_{0}^{t}\int_{0}^{1}S_{xx}^2\dif x\dif t+E_0+E^\frac{1}{2}\int_{0}^{t}\mathcal{D}(s)\dif s+E^\frac{3}{2}(t)\right).
\end{align*}

In view of  $\eqref{approximate}_4$, it follows that
\begin{align*}
\int_{0}^{t}\int_{0}^{1}\theta^2_{tx}\dif x\dif t\le C \int_{0}^{t}\int_{0}^{1}(\tau^2 (q^2_{tt}+\theta_t^2 q_t^2)+q_t^2+v_t^2)\dif x\dif t\\
\le C\left(E_0+E^\frac{1}{2}(t)\int_{0}^{t}\mathcal{D}(s)\dif s+E^\frac{3}{2}(t)\right)
\end{align*}
and
\begin{align*}
\int_{0}^{t}\int_{0}^{1}\theta^2_{xx}\dif x\dif t\le C \int_{0}^{t}\int_{0}^{1}(\tau^2 (q^2_{tx}+\theta_x^2 q_t^2)+q_x^2+v_x^2)\dif x\dif t\\
\le C\left(E_0+E^\frac{1}{2}(t)\int_{0}^{t}\mathcal{D}(s)\dif s+E^\frac{3}{2}(t)\right).
\end{align*}
Similarly, using $\eqref{approximate}_3$, we have
\begin{align*}
\int_{0}^{t}\int_{0}^{1}\theta^2_{tt}\dif x\dif t
&\le\int_{0}^{t}\int_{0}^{1}(u^2_{tx}+q^2_{tx})\dif x\dif t+C E^\frac{1}{2}(t)\int_{0}^{t}\mathcal{D}(s)\dif s \\
&\le C\left(E_0+E^\frac{1}{2}(t)\int_{0}^{t}\mathcal{D}(s)\dif s+E^\frac{3}{2}(t)\right).
\end{align*}
On the other hand, using the equation $\eqref{t}_2$ and the above estimates, one has
\begin{align*}
\int_0^t \int_0^1 u_{tt}^2 \dif x \dif t &\le C( \int_0^t \int_0^1 (v_{tx}^2+\theta_{tx}^2+S_{tx}^2)\dif x \dif t+E^\frac{1}{2}(t) \int_0^t \mathcal D(s) \dif s)\\
&\le C\left(\epsilon\int_{0}^{t}\int_{0}^{1}S_{xx}^2\dif x\dif t+E_0+E^\frac{1}{2}\int_{0}^{t}\mathcal{D}(s)\dif s+E^\frac{3}{2}(t)\right).
 \end{align*}
 Thus, the proof of this corollary is finished. 
\end{proof}

We are now ready to estimate the second-order derivative of the solution with respect to \( x \).
Unlike the estimates in the previous lemmas, integration by parts in this case introduces boundary terms. To handle these terms, we must carefully exploit the structure of the system, ensuring proper control and ultimately obtaining the desired estimate.

\begin{lemma}\label{le3.11}
There exists some constant $C$ such that
\begin{align}\label{ejxx}
&\int_{0}^{1}(u^2_{xx}+v^2_{xx}+\theta^2_{xx}+\tau   q^2_{xx}+\tau   S^2_{xx})\dif x+\int_{0}^{t}\int_{0}^{1}(q^2_{xx}+S^2_{xx})\dif x\dif t \nonumber\\
&\qquad+\frac{3\tau  \epsilon}{8\mu}\int_{0}^{t}(S^2_{xx}(t,0)+S^2_{xx}(t,1))\dif t
\le C\int_{0}^{t}\left( \SUM i 0 1 (u^2_{xx}+\theta^2_{xx}+u^2_{tx}+q^2_{xx} )(t, i)\right)\dif t  \nonumber \\
&\qquad\qquad\qquad+C\left( E_0+E^\frac{1}{2}(t)\int_{0}^{t}\mathcal{D}(s)\dif s+E^\frac{3}{2}(t) \right). 
\end{align}
\end{lemma}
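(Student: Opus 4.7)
The plan is to mimic the weighted energy-method structure of Lemmas~\ref{yijiex}--\ref{le3.9}, now applied to the system $\eqref{xx}$ obtained by differentiating \eqref{approximate} twice in $x$. Specifically, I would multiply the twice $x$-differentiated momentum, energy, Cattaneo, and Maxwell equations by $\theta u_{xx}$, $\theta_{xx}$, $q_{xx}/\kappa(\theta)$, and $(\theta/\mu) S_{xx}$ respectively, integrate over $[0,1]$, and sum. The time-derivative pieces assemble into $\frac{1}{2}\frac{\dif}{\dif t}$ of the coercive weighted norm
\[
\int_0^1 \Bigl(\tfrac{\theta}{2}u_{xx}^2 + \tfrac{R\theta^2}{v^2}v_{xx}^2 + \tfrac{e_\theta}{2}\theta_{xx}^2 + \tfrac{Z(\theta)}{2}q_{xx}^2 + \tfrac{\tau\theta}{2\mu}S_{xx}^2\Bigr)\dif x,
\]
while the relaxation terms from the Cattaneo and Maxwell multiplications supply the interior dissipations $\int v q_{xx}^2/\kappa(\theta)\dif x$ and $\int S_{xx}^2/(4\mu)\dif x$. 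The approximating advection $\tfrac{\tau\epsilon}{\mu}(b(x)S_x)_{xx}=\tfrac{\tau\epsilon}{\mu}(4S_{xx}+b(x)S_{xxx})$, tested against $\theta S_{xx}$, generates the boundary dissipation $\tfrac{3\tau\epsilon}{8\mu}\sum_{i=0,1}S_{xx}^2(t,i)$ after one integration by parts on the $bS_{xxx}$ piece, in direct analogy with Lemma~\ref{yijiex}.

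Next, I would exploit the pairwise cancellations among the highest-order coupling terms. Using $v_{tx}=u_{xx}$, the contribution $-\int \theta p_v v_{xxx}u_{xx}\dif x$ (arising from one integration by parts on the $p_{xxx}$ piece) converts into $-\frac{1}{2}\frac{\dif}{\dif t}\int\theta p_v v_{xx}^2\dif x$ modulo commutators; the cross terms $\int\frac{R\theta}{v}u_{xxx}\theta_{xx}\dif x$ and $-\int\theta u_{xxx}S_{xx}\dif x$ coming from the energy and Maxwell tests exactly annihilate the $\theta_{xxx}$- and $S_{xxx}$-counterparts from the momentum side; finally, the remaining pair $\int q_{xxx}\theta_{xx}\dif x+\int\theta_{xxx}q_{xx}\dif x=\theta_{xx}q_{xx}\big|_0^1$ collapses to a pure boundary contribution. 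All lower-order commutators fall under the uniform product estimate $CE^{1/2}(t)\mathcal D(t)$, and a single auxiliary integral analogous to $\int u_{xx}\theta A_1(t,x)\dif x$ from Lemma~\ref{le3.9} produces the $CE^{3/2}(t)$ term.

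The delicate step is the treatment of the genuine boundary traces produced by integration by parts. The dominant one, $\theta(p_{xx}-S_{xx})u_{xx}\big|_0^1$, cannot be killed by the imposed data alone. However, differentiating the momentum equation once in $x$ yields the pointwise identity $p_{xx}-S_{xx}=-u_{tx}$, so this boundary term becomes $-\theta u_{tx}u_{xx}\big|_0^1$, which Young's inequality dominates by $C\sum_{i=0,1}(u_{tx}^2+u_{xx}^2)(t,i)$. Analogously, $\tau(\theta)q_t+vq+\kappa(\theta)\theta_x=0$ together with $q|_{\partial\Omega}=q_t|_{\partial\Omega}=0$ forces $\theta_x|_{\partial\Omega}=0$, which suffices to reduce the remaining traces to the admissible quartet $\sum_{i=0,1}(u_{xx}^2+\theta_{xx}^2+u_{tx}^2+q_{xx}^2)(t,i)$ appearing on the right-hand side of \eqref{ejxx}. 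Collecting all pieces, integrating in $t$, using the a priori bounds \eqref{new-hu2-1}--\eqref{new-hu4-3}, and invoking the initial estimate $\|V_0^0\|_{H^2}^2\le CE_0$ would then deliver \eqref{ejxx}.

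The main obstacle is precisely the characteristic-boundary phenomenon: since $v,\theta,q,S$ have no prescribed boundary values, a naive integration by parts produces traces of order up to three that are a priori uncontrollable. The argument succeeds because two mechanisms work in tandem: (i) the highest-order coupling terms pair up across the momentum/energy/Cattaneo/Maxwell quartet and cancel, eliminating every would-be third-order boundary contribution; and (ii) the surviving second-order boundary traces can, via the PDEs restricted to $\partial\Omega$, be expressed in terms of the specific combination $u_{xx},\theta_{xx},u_{tx},q_{xx}$ at $x=0,1$, which is then deferred to a subsequent trace-type lemma rather than absorbed into the weighted energy.
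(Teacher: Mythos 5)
Your proposal follows essentially the same route as the paper: the same four multipliers $\theta u_{xx}$, $\theta_{xx}$, $q_{xx}/\kappa(\theta)$, $(\theta/\mu)S_{xx}$, the same weighted energy and dissipation structure, the same pairwise cancellation of the $u_{xxx}$--$\theta_{xx}$ and $u_{xxx}$--$S_{xx}$ cross terms, the collapse of $\int q_{xxx}\theta_{xx}+\int\theta_{xxx}q_{xx}$ to the trace $\theta_{xx}q_{xx}\big|_0^1$, the conversion of the boundary term via $p_{xx}-S_{xx}=-u_{tx}$, and the auxiliary integral $\int\theta v_{xx}A_2\,\dif x$ giving the $E^{3/2}$ contribution. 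The argument is correct and matches the paper's proof in all essential respects.
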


\begin{proof}
Taking derivatives with respect to $x$ twice to the equations \eqref{approximate}, we get
\begin{align}\label{xx}
\begin{cases}
v_{txx}=u_{xxx},\\
u_{txx}+{p(v,\theta)}_{xxx}=S_{xxx},\\
(e_{\theta}\theta_t)_{xx}-(\frac{2a(\theta)}{Z(\theta)}q\theta_x)_{xx}+(\frac{R\theta u_x}{v})_{xx}+q_{xxx}=(\frac{2a(\theta)}{\tau   (\theta)}q^2v)_{xx}+(\frac{S^2v}{\mu})_{xx},\\
\left(\tau   (\theta)q_t\right)_{xx}+(vq)_{xx}+(\kappa(\theta)\theta_x)_{xx}=0,\\
\tau  S_{txx}+(vS)_{xx}+\tau  \epsilon(b(x)S_x)_{xx}=\mu u_{xxx}.
\end{cases}
\end{align}
Multiplying the equation $\eqref{xx}_2$ by $\theta u_{xx}$, we get
$$\int_{0}^{1} \theta u_{txx}u_{xx}\dif x+\int_{0}^{1} \theta \left(p(v,\theta)_{xx}-S_{xx}\right)_xu_{xx}\dif x=0.$$
For the left-hand side of the above equation, we have
\begin{align*}
\int_{0}^{1} \theta u_{txx}u_{xx}\dif x=\frac{1}{2}\frac{\dif}{\dif t}\int_{0}^{1}\theta u^2_{xx}\dif x-\frac{1}{2}\int_{0}^{1}\theta_tu^2_{xx}\dif x
\ge\frac{1}{2}\frac{\dif}{\dif t}\int_{0}^{1}\theta u^2_{xx}\dif x-CE^{\frac{1}{2}}(t)\mathcal{D}(t),
\end{align*}
and
\begin{align*}
&\int_{0}^{1} \theta \left(p(v,\theta)_{xx}-S_{xx}\right)_xu_{xx}\dif x\\
=&\theta \left(p(v,\theta)_{xx}-S_{xx}\right)u_{xx}\big|_{0}^{1}-\int_{0}^{1} (\theta u_{xx})_x\left(p(v,\theta)_{xx}-S_{xx}\right)\dif x\\
=&-\theta u_{tx}u_{xx}\big|_{0}^{1}-\int_{0}^{1}(\theta_xu_{xx}+\theta u_{xxx})\left(p_{vv}v^2_x+2p_{v\theta}v_x\theta_x+p_vv_{xx}+p_\theta\theta_{xx}-S_{xx}\right)\dif x\\
\ge&-\theta u_{tx}u_{xx}\big|_{0}^{1}-\int_{0}^{1}\theta v_{txx}\left(p_{vv}v^2_x+2p_{v\theta}v_x\theta_x+p_vv_{xx}\right)\dif x-\int_{0}^{1}\frac{R\theta}{v}\theta_{xx}u_{xxx}\dif x\\
&+\int_{0}^{1}\theta S_{xx}u_{xxx}\dif x-CE^{\frac{1}{2}}(t)\mathcal{D}(t)\\
=&-\theta u_{tx}u_{xx}\big|_{0}^{1}-\frac{\dif}{\dif t}\int_{0}^{1}\theta v_{xx}A_2(t,x)\dif x+\frac{1}{2}\frac{\dif}{\dif t}\int_{0}^{1}\frac{R\theta^2}{v^2}v^2_{xx}\dif x+\int_{0}^{1}(\theta A_2(t,x))_tv_{xx}\dif x\\
&+\frac{1}{2}\int_{0}^{1}(\theta_tp_v+\theta p_{vv}v_t+\theta p_{v\theta}\theta_t)v^2_{xx}\dif x-\int_{0}^{1}\frac{R\theta}{v}\theta_{xx}u_{xxx}\dif x+\int_{0}^{1}\theta S_{xx}u_{xxx}\dif x-CE^{\frac{1}{2}}(t)\mathcal{D}(t).
\end{align*}
Here, $A_2(t,x):=p_{vv}v^2_x+2p_{v\theta}v_x\theta_x$. It's easy to show that
\begin{align*}
\frac{1}{2}\int_{0}^{1}(\theta_tp_v+\theta p_{vv}v_t+\theta p_{v\theta}\theta_t)v^2_{xx}\dif x \ge - CE^{\frac{1}{2}}(t)\mathcal{D}(t),
\end{align*}
and
\begin{align*}
&\int_{0}^{1}(\theta A_2(t,x))_tv_{xx}\dif x\\
=&\int_{0}^{1}\left\{\theta_t(p_{vv}v^2_x+2p_{v\theta}v_x\theta_t)+\theta(p_{vvv}v^2_xv_t+p_{vv\theta}v^2_x\theta_t\right.\\
&\left.+2p_{vv\theta}v_xv_t\theta_t+p_{vv}2v_xv_{tx}+2p_{v\theta}\theta_tv_{tx}+2p_{v\theta}\theta_{tt}v_x)\right\}v_{xx}\dif x\\
\ge&-CE^{\frac{1}{2}}(t)\mathcal{D}(t).
\end{align*}
Combining the above estimates, it yields
\begin{align*}
&\int_{0}^{1} \theta \left(p(v,\theta)_{xx}-S_{xx}\right)_xu_{xx}\dif x\ge-\theta u_{tx}u_{xx}\big|_{0}^{1}-\frac{\dif}{\dif t}\int_{0}^{1}\theta v_{xx}A_2(t,x)\dif x\\
&\qquad\qquad\qquad+\frac{1}{2}\frac{\dif}{\dif t}\int_{0}^{1}\frac{R\theta^2}{v^2}v^2_{xx}\dif x-\int_{0}^{1}\frac{R\theta}{v}\theta_{xx}u_{xxx}\dif x+\int_{0}^{1}\theta S_{xx}u_{xxx}\dif x-CE^{\frac{1}{2}}(t)\mathcal{D}(t).
\end{align*}
Thus, we have
\begin{align}\label{u_{xx1}}
\frac{1}{2}\frac{\dif}{\dif t}\int_{0}^{1}(\theta u^2_{xx}+\frac{R\theta^2}{v^2}v^2_{xx})\dif x-\int_{0}^{1}\frac{R\theta}{v}\theta_{xx}u_{xxx}\dif x+\int_{0}^{1}\theta S_{xx}u_{xxx}\dif x\nonumber\\
\le \theta u_{tx}u_{xx}\big|_{0}^{1}+\frac{\dif}{\dif t}\int_{0}^{1}\theta v_{xx}A_2(t,x)\dif x+CE^{\frac{1}{2}}(t)\mathcal{D}(t).
\end{align}
Multiplying the equation $\eqref{xx}_3$ by $\theta_{xx}$, we get
\begin{align*}
&\int_{0}^{1}(e_{\theta}\theta_t)_{xx}\theta_{xx}\dif x-\int_{0}^{1}\left(\frac{2a(\theta)}{Z(\theta)}q\theta_x\right)_{xx}\theta_{xx}\dif x+\int_{0}^{1}\left(\frac{R\theta u_x}{v}\right)_{xx}\theta_{xx}\dif x+\int_{0}^{1}q_{xxx}\theta_{xx}\dif x\\
&=\int_{0}^{1}\left(\frac{2a(\theta)}{\tau   (\theta)}q^2v\right)_{xx}\theta_{xx}\dif x+\int_{0}^{1}\left(\frac{S^2v}{\mu}\right)_{xx}\theta_{xx}\dif x.
\end{align*}
First, we have
\begin{align*}
&\int_{0}^{1}(e_{\theta}\theta_t)_{xx}\theta_{xx}\dif x\\
=&\int_{0}^{1}(e_{\theta\theta\theta}\theta^2_x\theta_t+2e_{\theta\theta q}\theta_xq_x\theta_t+e_{\theta qq}\theta_tq^2_x+2e_{\theta\theta}\theta_x\theta_{tx}+e_{\theta\theta}\theta_t\theta_{xx}+e_{\theta q}\theta_tq_{xx}\\
&+2e_{\theta q}\theta_{tx}q_x+e_{\theta}\theta_{txx})\theta_{xx}\dif x\\
&\ge \int_{0}^{1}e_{\theta}\theta_{txx}\theta_{xx}\dif x-CE^{\frac{1}{2}}(t)\mathcal{D}(t)\\
&=\frac{1}{2}\frac{\dif}{\dif t}\int_{0}^{1}e_{\theta}\theta^2_{xx}\dif x-\frac{1}{2}\int_{0}^{1}\left(e_{\theta\theta}\theta_t+e_{\theta q}q_t\right)\theta^2_{xx}\dif x-CE^{\frac{1}{2}}(t)\mathcal{D}(t)\\
&\ge\frac{1}{2}\frac{\dif}{\dif t}\int_{0}^{1}e_{\theta}\theta^2_{xx}\dif x-CE^{\frac{1}{2}}(t)\mathcal{D}(t).
\end{align*}
Second, one has
\begin{align*}
&-\int_{0}^{1}\left(\frac{2a(\theta)}{Z(\theta)}q\theta_x\right)_{xx}\theta_{xx}\dif x\\
=&-\left(\frac{2a^{\prime\prime}(\theta) Z(\theta)+4a^\prime(\theta) Z^\prime(\theta)}{Z^2(\theta)}q\theta^3_x+\frac{4a^{\prime}(\theta)}{Z(\theta)}q_x\theta^2_x+\frac{6a^{\prime}(\theta)}{Z(\theta)}q\theta_x\theta_{xx}-\frac{2a(\theta)(Z^{\prime\prime}(\theta)-2Z^\prime(\theta)^2)}{Z^3(\theta)}q\theta^3_x \right.\\
&\left.-\frac{4a(\theta)Z^\prime(\theta)}{Z^2(\theta)}q_x\theta^2_x-\frac{6a(\theta)Z^\prime(\theta)}{Z^2(\theta)}q\theta_x\theta_{xx}+\frac{2a(\theta)}{Z(\theta)}q_{xx}\theta_x+\frac{4a(\theta)}{Z(\theta)}q_x\theta_{xx}+\frac{2a(\theta)}{Z(\theta)}q\theta_{xxx}\right)\theta_{xx}\dif x\\
\ge&-\int_{0}^{1}\frac{2a(\theta)}{Z(\theta)}q\theta_{xxx}\theta_{xx}\dif x-CE^{\frac{1}{2}}(t)\mathcal{D}(t)\\
=&-\frac{a(\theta)}{Z(\theta)}q\theta^2_{xx}\big|_{0}^{1}+\int_{0}^{1}\theta^2_{xx}\left(\frac{a^\prime(\theta)}{Z(\theta)}\theta_xq-\frac{a(\theta)Z^\prime(\theta)}{Z^2(\theta)}\theta_xq+\frac{a(\theta)}{Z(\theta)}q_x\right)\dif x-CE^{\frac{1}{2}}(t)\mathcal{D}(t)\\
\ge&-CE^{\frac{1}{2}}(t)\mathcal{D}(t),
\end{align*}
and
\begin{align*}
&\int_{0}^{1}\left(\frac{R\theta u_x}{v}\right)_{xx}\theta_{xx}\dif x\\
&=\int_{0}^{1}R\left(\frac{\theta_{xx}u_x}{v}+\frac{2\theta_xu_{xx}}{v}-\frac{2\theta_xu_xv_x}{v^2}-\frac{2\theta u_{xx}v_x}{v^2}-\frac{\theta u_xv_{xx}}{v^2}+\frac{2\theta u_xv^2_x}{v^3}+\frac{\theta u_{xxx}}{v}\right)\theta_{xx}\dif x\\
&\ge\int_{0}^{1}\frac{R\theta}{v}u_{xxx}\theta_{xx}\dif x-CE^{\frac{1}{2}}(t)\mathcal{D}(t).
\end{align*}
Similarly, we have
\begin{align*}
&\int_{0}^{1}\left(\frac{2a(\theta)}{\tau   (\theta)}q^2v\right)_{xx}\theta_{xx}\dif x\\
=&\left\{\frac{2a^{\prime\prime}(\theta) \tau   (\theta)-4a^\prime(\theta)\tau   ^\prime(\theta)}{\tau   ^2(\theta)}\theta^2_xq^2v+\frac{2a^{\prime}(\theta)}{\tau   (\theta)}\theta_{xx}q^2v+\frac{4a^{\prime}(\theta)}{\tau   (\theta)}\theta_x2qq_xv+\frac{4a^{\prime}(\theta)}{\tau   (\theta)}\theta_xq^2v_x\right.\\
&-\frac{2a(\theta)(\tau   ^{\prime\prime}(\theta)\tau   (\theta)-2\tau   ^\prime(\theta)^2)}{\tau   ^3(\theta)}\theta^2_xq^2v-\frac{2a(\theta)\tau   ^\prime(\theta)}{\tau   ^2(\theta)}q^2(\theta_{xx}v+\theta_x v_x)-\frac{4a(\theta)\tau   ^\prime(\theta)}{\tau   ^2(\theta)}2qq_x\theta_xv\\
&\left.+\frac{2a(\theta)}{\tau   (\theta)}2qq_{xx}v+\frac{2a(\theta)}{\tau   (\theta)}2q^2_xv+\frac{4a(\theta)}{\tau   (\theta)}2qq_xv_x-\frac{4a(\theta)\tau   ^\prime(\theta)}{\tau   ^2(\theta)}\theta_xq^2v_x+\frac{2a(\theta)}{\tau   (\theta)}q^2v_{xx}\right\}\theta_{xx}\dif x\\
\le& CE^{\frac{1}{2}}(t)\mathcal{D}(t),
\end{align*}
and
\begin{align*}
\int_{0}^{1}(\frac{S^2v}{\mu})_{xx}\theta_{xx}\dif x
=\int_{0}^{1}\frac{1}{\mu}\left(2S^2_xv+2SS_{xx}v+4SS_xv_x+S^2v_{xx}\right)\theta_{xx}\dif x\le CE^{\frac{1}{2}}(t)\mathcal{D}(t).
\end{align*}
Therefore, we derive that
\begin{align}\label{u_{xx2}}
\frac{1}{2}\frac{\dif}{\dif t}\int_{0}^{1}e_{\theta}\theta^2_{xx}\dif x+\int_{0}^{1}\frac{R\theta}{v}u_{xxx}\theta_{xx}\dif x+\int_{0}^{1}q_{xxx}\theta_{xx}\dif x
\le CE^{\frac{1}{2}}(t)\mathcal{D}(t).
\end{align}
Multiplying the equation $\eqref{xx}_4$ by $\frac{q_{xx}}{\kappa(\theta)}$, we get
$$\int_{0}^{1}\left(\tau   (\theta)q_t\right)_{xx}\frac{q_{xx}}{\kappa(\theta)}\dif x+\int_{0}^{1}(vq)_{xx}\frac{q_{xx}}{\kappa(\theta)}\dif x+\int_{0}^{1}(\kappa(\theta)\theta_x)_{xx}\frac{q_{xx}}{\kappa(\theta)}\dif x=0.$$
First, we have
\begin{align*}
\int_{0}^{1}\left(\tau   (\theta)q_t\right)_{xx}\frac{q_{xx}}{\kappa(\theta)}\dif x
&=\int_{0}^{1}\frac{1}{\kappa(\theta)}\left(\tau   ^{\prime\prime}(\theta)\theta^2_xq_t+\tau   ^\prime(\theta)\theta_{xx}q_t+2\tau   ^\prime(\theta)\theta_xq_{xt}+\tau   (\theta)q_{txx}\right)q_{xx}\dif x\\
&\ge\int_{0}^{1}Z(\theta)q_{txx}q_{xx}\dif x-CE^{\frac{1}{2}}(t)\mathcal{D}(t)\\
&=\frac{1}{2}\frac{\dif}{\dif t}\int_{0}^{1}Z(\theta)q^2_{xx}\dif x-\frac{1}{2}\int_{0}^{1}Z^\prime(\theta)\theta_tq^2_{xx}\dif x-CE^{\frac{1}{2}}(t)\mathcal{D}(t)\\
&\ge\frac{1}{2}\frac{\dif}{\dif t}\int_{0}^{1}Z(\theta)q^2_{xx}\dif x-CE^{\frac{1}{2}}(t)\mathcal{D}(t),
\end{align*}

\begin{align*}
\int_{0}^{1}(vq)_{xx}\frac{q_{xx}}{\kappa(\theta)}\dif x
=\int_{0}^{1}\frac{1}{\kappa(\theta)}\left(v_{xx}q+2v_xq_x+vq_{xx}\right)q_{xx}\dif x
\ge\int_{0}^{1}\frac{v}{\kappa(\theta)}q^2_{xx}\dif x-CE^{\frac{1}{2}}(t)\mathcal{D}(t),
\end{align*}
and
\begin{align*}
\int_{0}^{1}(\kappa(\theta)\theta_x)_{xx}\frac{q_{xx}}{\kappa(\theta)}\dif x
&=\int_{0}^{1}\frac{1}{\kappa(\theta)}\left(\kappa^{\prime\prime}(\theta)\theta^3_x+3\kappa^\prime(\theta)\theta_x\theta_{xx}+\kappa(\theta)\theta_{xxx}\right)q_{xx}\dif x\\
&\ge\int_{0}^{1}\theta_{xxx}q_{xx}\dif x-CE^{\frac{1}{2}}(t)\mathcal{D}(t)\\
&=\theta_{xx}q_{xx}\big|_{0}^{1}-\int_{0}^{1}\theta_{xx}q_{xxx}\dif x-CE^{\frac{1}{2}}(t)\mathcal{D}(t).
\end{align*}
Therefore, we derive that
\begin{align}\label{u_{xx3}}
\frac{1}{2}\frac{\dif}{\dif t}\int_{0}^{1}Z(\theta)q^2_{xx}\dif x+\int_{0}^{1}\frac{v}{\kappa(\theta)}q^2_{xx}\dif x+\theta_{xx}q_{xx}\big|_{0}^{1}-\int_{0}^{1}\theta_{xx}q_{xxx}\dif x
\le CE^{\frac{1}{2}}(t)\mathcal{D}(t).
\end{align}
Multiplying the equation $\eqref{xx}_5$ by $\frac{\theta}{\mu}S_{xx}$, we get
\begin{align*}
\int_{0}^{1}\frac{\theta}{\mu}S_{xx}\tau  S_{txx}\dif x+\int_{0}^{1}\frac{\theta}{\mu}S_{xx}(vS)_{xx}\dif x-\int_{0}^{1}\theta S_{xx} u_{xxx}\dif x
+\frac{\tau  \epsilon}{\mu}\int_{0}^{1}(b(x)S_x)_{xx}\theta S_{xx}\dif x
=0.
\end{align*}
Similarly, we have
\begin{align*}
\int_{0}^{1}\frac{\theta}{\mu}S_{xx}\tau  S_{txx}\dif x
=\frac{\tau  }{2\mu}\frac{\dif}{\dif t}\int_{0}^{1}\theta S^2_{xx}\dif x-\frac{\tau  }{2\mu}\int_{0}^{1}\theta_t S^2_{xx}\dif x
\ge\frac{\tau  }{2\mu}\frac{\dif}{\dif t}\int_{0}^{1}\theta S^2_{xx}\dif x-CE^{\frac{1}{2}}(t)\mathcal{D}(t),
\end{align*}

\begin{align*}
\int_{0}^{1}\frac{\theta}{\mu}S_{xx}(vS)_{xx}\dif x
=\int_{0}^{1}\frac{\theta}{\mu}(v_{xx}S+2v_xS_x+vS_{xx})S_{xx}\dif x
\ge\int_{0}^{1}\frac{1}{2\mu}S^2_{xx}\dif x-CE^{\frac{1}{2}}(t)\mathcal{D}(t)
\end{align*}
and
\begin{align*}
\frac{\tau  \epsilon}{\mu}\int_{0}^{1}(b(x)S_x)_{xx}\theta S_{xx}\dif x
&=\frac{\tau  \epsilon}{\mu}\int_{0}^{1}(4S_{xx}+b(x)S_{xxx})\theta S_{xx}\dif x\\
&=\frac{4\tau  \epsilon}{\mu}\int_{0}^{1}\theta S_{xx}^2\dif x+\frac{\tau  \epsilon}{2\mu}b(x)\theta S^2_{xx}\big|_{0}^{1}-\frac{\tau  \epsilon}{2\mu}\int_{0}^{1}(2\theta+b(x)\theta_x)S^2_{xx}\dif x\\
&\ge \frac{3\tau  \epsilon}{\mu}\int_{0}^{1}\theta S^2_{xx}\dif x+\frac{3\tau  \epsilon}{8\mu}(S^2_{xx}(t,0)+S^2_{xx}(t,1))-CE^{\frac{1}{2}}(t)\mathcal{D}(t).
\end{align*}
Therefore, we derive that
\begin{align}\label{u_{xx4}}
\frac{\tau  }{2\mu}\frac{\dif}{\dif t}\int_{0}^{1}\theta S^2_{xx}\dif x+\int_{0}^{1}\frac{1}{4\mu}S^2_{xx}\dif x-\int_{0}^{1}\theta S_{xx} u_{xxx}\dif x+\frac{3\tau  \epsilon}{8\mu}(S^2_{xx}(t,0)+S^2_{xx}(t,1))
\le CE^{\frac{1}{2}}(t)\mathcal{D}(t) .
\end{align}
Thus, combining the estimates \eqref{u_{xx1}}-\eqref{u_{xx4}} , we get
\begin{align*}
\frac{\dif}{\dif t} \int_0^1  \left( \frac{\theta}{2} u_{xx}^2+\frac{R\theta^2}{v^2} v_{xx}^2+\frac{e_\theta}{2} \theta_{xx}^2+\frac{Z(\theta)}{2} q_{xx}^2 +\frac{\tau \theta}{2\mu}  S_{xx}^2\right) \dif x 
+\int_0^1 \left(\frac{v}{\kappa (\theta)} q_{xx}^2+\frac{1}{4\mu} S_{xx}^2 \right)\dif x \\
+\frac{3\tau  \epsilon}{8\mu}(S^2_{xx}(t,0)+S^2_{xx}(t,1)) \le \theta u_{tx} u_{xx}\Big |_0^1-\theta_{xx} q_{xx} \Big|_0^1+\frac{\dif }{\dif t} \int_0^1 v_{xx} \theta A_2(t,x) \dif x +CE^{\frac{1}{2}}(t)\mathcal{D}(t).
\end{align*}
Integrating the above inequality over $(0, t)$, and noting that
\begin{align*}
\int_0^1 v_{xx} \theta A_2(t,x) \dif x  \le CE^\frac{3}{2}(t)
\end{align*}
and
\begin{align*}
\theta u_{tx} u_{xx}\Big |_0^1-\theta_{xx} q_{xx} \Big|_0^1 \le C\left(   \SUM i 0 1 (u^2_{xx}+\theta^2_{xx}+u^2_{tx}+q^2_{xx} )(t, i)\right),
\end{align*}
the inequality   \eqref{ejxx} follows  immediately.
\end{proof}


Now, we prove the following key lemmas which deal with the boundary term on the right-hand-side of \eqref{ejxx}.
\begin{lemma}\label{le-hu-2}
There exists some constant $C$ such that
\begin{align}
&\int_{0}^{t}(u^2_{xx}(t,1)+u^2_{xx}(t,0)+\theta^2_{xx}(t,1)+\theta^2_{xx}(t,0))\dif t
\le \eta \left(\int_{0}^{1}\theta^2_{xx}\dif x +\int_0^t\int_0^1 (q_{xx}^2+S_{xx}^2)\dif x \dif t\right)  \nonumber
\\
&+C(\eta) \epsilon \int_{0}^{t}\int_{0}^{1}S^2_{xx}\dif x \dif t+\frac{\tau  \epsilon}{8\mu} \eta \int_{0}^{t}(S^2_{xx}(t,0)+S^2_{xx}(t,1))\dif t+\frac{2\tau  \epsilon}{\mu} C(\eta)(u^2_{xx}(t,0)+u^2_{xx}(t,1))\nonumber\\
&+C\left(E_0+E^\frac{1}{2}(t)\int_{0}^{t}\mathcal{D}(s)\dif s+E^\frac{3}{2}(t) \right),  \label{new-hu-9}
\end{align}
where $\eta$ is any small  constant to be chosen later.
\end{lemma}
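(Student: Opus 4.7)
The plan is to express the boundary values $u_{xx}|_{\partial\Omega}$ and $\theta_{xx}|_{\partial\Omega}$ via the equations themselves, then match each resulting contribution to a term on the RHS of \eqref{new-hu-9} using $\eta$-absorption, the smallness of $\epsilon$, the previous lemmas, and integration by parts in time.

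First, I would extract the relevant boundary traces. From $u|_{\partial\Omega}=q|_{\partial\Omega}\equiv 0$, all time derivatives of $u$ and $q$ vanish at $\partial\Omega$; evaluating $\eqref{approximate}_4$ there with $q=q_t=0$ gives $\kappa(\theta)\theta_x|_{\partial\Omega}=0$, hence $\theta_x|_{\partial\Omega}=\theta_{tx}|_{\partial\Omega}=0$. Differentiating $\eqref{approximate}_4$ in $x$ at $\partial\Omega$ yields
\[
\kappa(\theta)\theta_{xx}|_{\partial\Omega}=-\tau(\theta)q_{tx}|_{\partial\Omega}-v q_x|_{\partial\Omega}.
\]
For $u_{xx}$, differentiating $\eqref{approximate}_5$ in $x$ gives $\mu u_{xx}|_{\partial\Omega}=\tau S_{tx}+2\tau\epsilon S_x+\tau\epsilon b(x)S_{xx}+v_xS+vS_x|_{\partial\Omega}$, while differentiating $\eqref{approximate}_2$ in $t$ at $\partial\Omega$ (using $u_{tt}|_{\partial\Omega}=0$, $v_{xt}=u_{xx}$, $\theta_x|_{\partial\Omega}=\theta_{xt}|_{\partial\Omega}=0$) produces $S_{xt}|_{\partial\Omega}=p_{xt}|_{\partial\Omega}=-\tfrac{R\theta}{v^2}u_{xx}|_{\partial\Omega}+[\text{first-order products}]$. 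Substituting and using $\mu+\tau R\theta/v^2\ge\mu>0$ yields a uniform pointwise bound
\[
|u_{xx}(t,i)|\le C\tau(|\theta_tv_x|+|v_xu_x|)(t,i)+C\tau\epsilon|S_{xx}(t,i)|+C\tau\epsilon|S_x(t,i)|+C|v_xS+vS_x|(t,i).
\]

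Next, I would square and integrate over $(0,t)$, matching each piece. The first-order-product terms are bounded by $CE^{1/2}(t)\int_0^t\mathcal{D}\,ds$ via the embedding $L^\infty\hookrightarrow H^1$ and $E(t)\le\delta$. The piece $C\tau^2\epsilon^2\int_0^t S_{xx}^2(s,i)\,ds=C\tau\epsilon\cdot\tau\epsilon\int_0^t S_{xx}^2(s,i)\,ds$ is absorbed into $\tfrac{\tau\epsilon}{8\mu}\eta\int_0^t S_{xx}^2(s,i)\,ds$ by choosing $\epsilon$ small enough; $C\tau^2\epsilon^2\int_0^t S_x^2(s,i)\,ds$ is handled by the 1D trace $|f(i)|^2\le\eta\|f_x\|_{L^2}^2+C(\eta)\|f\|_{L^2}^2$, producing $C(\eta)\epsilon\int_0^t\int_0^1 S_{xx}^2\,dx\,ds$ plus a controllable dissipation contribution; the $(v_xS+vS_x)^2$ piece yields $CE^{1/2}(t)\int_0^t\mathcal{D}\,ds$. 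For $\theta_{xx}^2(s,i)\le C\tau^2q_{tx}^2(s,i)+Cq_x^2(s,i)$, the $Cq_x^2$ boundary term is handled analogously by trace, contributing $\eta\int_0^t\int_0^1 q_{xx}^2\,dx\,ds$ plus the good remainder $C(\eta)(E_0+E^{1/2}\int_0^t\mathcal{D}\,ds)$ from Lemma \ref{yijiex}.

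The main obstacle is $\tau^2\int_0^t q_{tx}^2(s,i)\,ds$, since a direct trace in $x$ would require $\|q_{txx}\|_{L^2}^2$, which is not controlled by $E(t)$. I would resolve this by integrating by parts in time: writing $\tau q_{tx}(s,i)=\partial_s(\tau q_x)(s,i)-\tau'(\theta)\theta_t q_x(s,i)$ and substituting the boundary identity $\tau q_{tx}=-\kappa\theta_{xx}-vq_x$ once produces (i) an endpoint contribution at $s=t$ bounded by $\eta\theta_{xx}^2(t,i)+C(\eta)\tau^2q_x^2(t,i)$, whose first piece matches the RHS term $\eta\int_0^1\theta_{xx}^2\,dx$ via a Sobolev trace and whose second piece is absorbed into $CE(t)$; (ii) a bulk term involving $\kappa\theta_{txx}|_{\partial\Omega}$, which by $\eqref{approximate}_4$ differentiated in $tx$ at $\partial\Omega$ equals $-\tau q_{ttx}+[\text{l.o.t.}]$ and, via one further integration by parts in time, reduces to $\int_0^t\tau^2 q_{tx}^2(s,i)\,ds$ with a coefficient allowing self-absorption of half of the LHS; and (iii) initial-data contributions bounded by $CE_0$. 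The residual pointwise endpoint term $\tau\epsilon u_{xx}^2(t,i)$ matches the RHS factor $\tfrac{2\tau\epsilon}{\mu}C(\eta)(u_{xx}^2(t,0)+u_{xx}^2(t,1))$. Choosing $\eta$ and $\epsilon$ suitably small then yields \eqref{new-hu-9}.
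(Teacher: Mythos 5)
Your reduction of the boundary traces via the equations is fine for $u_{xx}|_{\partial\Omega}$ (and is close in spirit to how the paper later handles $q_{xx}|_{\partial\Omega}$), but the treatment of $\theta_{xx}|_{\partial\Omega}$ contains a fatal gap. The identity $\kappa(\theta)\theta_{xx}=-\tau(\theta)q_{tx}-vq_x$ on $\partial\Omega$ forces you to control $\int_0^t\tau^2 q_{tx}^2(s,i)\,\dif s$, and your proposed double integration by parts in time is circular: writing $\tau q_{tx}=\partial_s(\tau q_x)+\dots$, substituting the boundary identity, integrating by parts, then substituting $\kappa\theta_{txx}|_{\partial\Omega}=-\tau q_{ttx}+\mathrm{l.o.t.}$ and integrating by parts once more returns exactly $\int_0^t\tau^2 q_{tx}^2(s,i)\,\dif s$ with coefficient $1$ (the boundary terms generated along the way cancel identically, since the whole computation is just the time-differentiated constitutive relation fed back into itself), so nothing is absorbed. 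Moreover, the endpoint term you produce, $\eta\,\theta_{xx}^2(t,i)$, cannot be "matched via a Sobolev trace" to $\eta\int_0^1\theta_{xx}^2\,\dif x$: the 1D trace inequality bounds a boundary value by $\|\theta_{xx}\|_{L^2}\|\theta_{xx}\|_{H^1}$, i.e.\ it requires $\theta_{xxx}\in L^2$, which is not in the energy; the inequality you need goes in the wrong direction. (A smaller inaccuracy: the piece $\int_0^t (vS_x)^2(s,i)\,\dif s$ is not $O(E^{1/2}\int\mathcal D)$ but a genuine trace term needing $\eta\int_0^t\int_0^1 S_{xx}^2$; this one is repairable.)

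The paper avoids the circularity by never taking pointwise traces of the constitutive law for $\theta_{xx}$. Instead it multiplies the twice-$x$-differentiated Cattaneo equation by the weighted multiplier $\frac{\tau b(x)}{\mu\tau(\theta)\theta}\theta_{xx}$ with $b(x)=2x-1$, so that integrating $\theta_{xxx}\theta_{xx}$ by parts produces $\theta_{xx}^2(t,0)+\theta_{xx}^2(t,1)$ with a favorable sign on the left; the third-order interior term $-\int_0^1\frac{R\tau b}{\mu v}u_{xx}\theta_{txx}\,\dif x$ that this creates is then cancelled exactly by the matching term arising from multiplying the twice-$x$-differentiated stress equation by $\frac{\overline b(x)}{\mu}u_{xx}$ (which simultaneously yields $u_{xx}^2(t,i)$ on the left). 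This structural cancellation is the key mechanism of the lemma, and your argument has no substitute for it.
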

	
\begin{proof}
For simplicity, denote $\overline{b}(x):=-b(x)$, with $\overline{b}(0)=1, \overline{b}(1)=-1$.
		
Multiplying the equation $\eqref{xx}_4$ by $\frac{\tau  b(x)}{\mu\tau   (\theta)\theta}\theta_{xx}$ and integrating the result,  we get
\begin{align}\label{new-hu-6}
\int_{0}^{1}\frac{\tau  b(x)}{\mu\tau   (\theta)\theta}\left(\tau   (\theta)q_t\right)_{xx}\theta_{xx}\dif x+\int_{0}^{1}\frac{\tau  b(x)}{\mu\tau   (\theta)\theta}(vq)_{xx}\theta_{xx}\dif x+\int_{0}^{1}\frac{\tau  b(x)}{\mu\tau   (\theta)\theta}(\kappa(\theta)\theta_x)_{xx}\theta_{xx}\dif x=0.
\end{align}
For the first term of \eqref{new-hu-6}, we have
\begin{align*}
&\int_{0}^{1}\frac{\tau  b(x)}{\mu\tau   (\theta)\theta}\left(\tau   (\theta)q_t\right)_{xx}\theta_{xx}\dif x\\
&=\int_{0}^{1}\frac{\tau  b(x)}{\mu\tau   (\theta)\theta}\left(\tau   ^{\prime\prime}(\theta)\theta^2_xq_t+\tau   ^\prime(\theta)\theta_{xx}q_t+2\tau   ^\prime(\theta)\theta_xq_{xt}+\tau   (\theta)q_{xxt}\right)\theta_{xx}\dif x\\
&\ge \int_{0}^{1}\frac{\tau  b(x)}{\mu\theta}q_{txx}\theta_{xx}\dif x-CE^{\frac{1}{2}}(t)\mathcal{D}(t).
\end{align*}
Note that, by using $\eqref{tx}_3$, we have
$$q_{txx}=(\frac{2a(\theta)}{\tau   (\theta)}q^2v)_{tx}+(\frac{S^2v}{\mu})_{tx}-(e_{\theta}\theta_t)_{tx}+(\frac{2a(\theta)}{Z(\theta)}q\theta_x)_{tx}-(\frac{R\theta u_x}{v})_{tx},$$
which gives
\begin{align*}
&\int_{0}^{1}\frac{\tau  b(x)}{\mu\theta}q_{txx}\theta_{xx}\dif x\\
\ge&\int_{0}^{1}\frac{\tau  b(x)}{\mu\theta}\left(-e_{\theta}\theta_{ttx}+\frac{2a(\theta)}{Z(\theta)}q\theta_{txx}-\frac{R\theta}{v}u_{txx}\right)\theta_{xx}\dif x-CE^{\frac{1}{2}}(t)\mathcal{D}(t)\\
\ge&-\frac{\dif}{\dif t}\int_{0}^{1}\frac{\tau  b(x)}{\mu\theta}\left(e_{\theta}\theta_{tx}-\frac{a(\theta)}{Z(\theta)}q\theta_{xx}+\frac{R\theta}{v}u_{xx}\right)\theta_{xx}\dif x\\
&+\int_{0}^{1}\left\{-\frac{\tau  b(x)\theta_t}{\mu\theta^2}\left(e_{\theta}\theta_{tx}-\frac{a(\theta)}{Z(\theta)}q\theta_{xx}+\frac{R\theta}{v}u_{xx}\right)\theta_{xx}\right.\\
&+\frac{\tau  b(x)}{\mu\theta}\left(e_{\theta\theta}\theta_t\theta_{tx}+e_{\theta q}q_t\theta_{tx}-\frac{a^\prime(\theta)Z(\theta)-a(\theta)Z^\prime(\theta)}{Z^2(\theta)}q\theta_t\theta_{xx} -\frac{a(\theta)}{Z(\theta)}q_t\theta_{xx}+\frac{R(v\theta_t-\theta v_t}{v^2}u_{xx}\right)\theta_{xx}\\
&\left.+\frac{\tau  b(x)}{\mu\theta}e_\theta\theta_{tx}\theta_{txx}+\frac{R\tau  b(x)}{v\mu}u_{xx}\theta_{txx}\right\}\dif x-CE^{\frac{1}{2}}(t)\mathcal{D}(t)\\
\ge&-\frac{\dif}{\dif t}\int_{0}^{1}\frac{\tau  b(x)}{\mu\theta}\left(e_{\theta}\theta_{tx}-\frac{a(\theta)}{Z(\theta)}q\theta_{xx}+\frac{R\theta}{v}u_{xx}\right)\theta_{xx}\dif x\\
&+\int_{0}^{1}\frac{\tau  b(x)}{\mu\theta}e_\theta\theta_{tx}\theta_{txx}\dif x+\int_{0}^{1}\frac{R\tau  b(x)}{v\mu}u_{xx}\theta_{txx}\dif x-CE^{\frac{1}{2}}(t)\mathcal{D}(t),
\end{align*}
where
\begin{align*}
&\int_{0}^{1}\frac{\tau  b(x)}{\mu\theta}e_\theta\theta_{tx}\theta_{txx}\dif x\\
&=\frac{1}{2}\frac{\tau  b(x)}{\mu\theta}e_\theta\theta^2_{tx}\big|_{0}^{1}-\int_{0}^{1}\frac{\tau  }{2\mu}\left(\frac{b(x)e_\theta}{\theta}\right)_x\theta^2_{tx}\dif x\\
&\ge-\int_{0}^{1}\frac{\tau  }{2\mu}\left(\frac{2e_\theta}{\theta}-\frac{\theta_xb(x)e_\theta}{\theta^2}+\frac{b(x)e_{\theta\theta}\theta_x}{\theta}+\frac{b(x)e_{q\theta}q_x}{\theta}\right)\theta^2_{tx}\dif x\\
&\ge -\int_{0}^{1}\frac{\tau  e_\theta}{\theta\mu}\theta^2_{tx}\dif x-CE^{\frac{1}{2}}(t)\mathcal{D}(t).
\end{align*}
For the second term of \eqref{new-hu-6}, we get
\begin{align*}
\int_{0}^{1}\frac{\tau  b(x)}{\mu\tau   (\theta)\theta}(vq)_{xx}\theta_{xx}\dif x
&=\int_{0}^{1}\frac{\tau  b(x)}{\mu\tau   (\theta)\theta}\left(v_{xx}q+2v_xq_x+vq_{xx}\right)\theta_{xx}\dif x\\
&\ge\int_{0}^{1}\frac{\tau  b(x)}{\mu\tau   (\theta)\theta}vq_{xx}\theta_{xx}\dif x-CE^{\frac{1}{2}}(t)\mathcal{D}(t).
\end{align*}
The last term of \eqref{new-hu-6} can be estimates as follows:
\begin{align*}
&\int_{0}^{1}\frac{\tau  b(x)}{\mu\tau   (\theta)\theta}(\kappa(\theta)\theta_x)_{xx}\theta_{xx}\dif x\\
=&\int_{0}^{1}\frac{\tau  b(x)}{\mu\tau   (\theta)\theta}\left(\kappa^{\prime\prime}(\theta)\theta^3_x+3\kappa^\prime(\theta)\theta_x\theta_{xx}+\kappa(\theta)\theta_{xxx}\right)\theta_{xx}\dif x\\
\ge&\int_{0}^{1}\frac{\tau  b(x) \kappa (\theta)}{\mu \tau   (\theta)\theta}\theta_{xxx}\theta_{xx}\dif x-CE^{\frac{1}{2}}(t)\mathcal{D}(t)\\
=&\frac{\tau  }{2\mu}\frac{b(x) \kappa (\theta)}{\tau   (\theta)\theta}\theta^2_{xx}\bigg|_{0}^{1}-\frac{\tau  }{2\mu}\int_{0}^{1}\left(\frac{2}{Z(\theta)\theta}-\frac{Z^\prime(\theta)b(x)}{Z^2(\theta)\theta}\theta_x-\frac{b(x)}{Z(\theta)\theta^2}\theta_x\right)\theta^2_{xx}\dif x-CE^{\frac{1}{2}}(t)D(t)\\
\ge&\frac{\tau  \kappa(\theta(t,1))\theta^2_{xx}(t,1)}{2\mu \tau   (\theta(t,1))\theta(t,1)}+\frac{\tau   \kappa (\theta(t,0))\theta^2_{xx}(t,0)}{2\mu \tau   (\theta(t,0))\theta(t,0)}-\int_{0}^{1}\frac{\tau  }{\mu Z(\theta)\theta}\theta^2_{xx}\dif x-CE^{\frac{1}{2}}(t)D(t).
\end{align*}
Combining the above estimates,  using the fact $\tau   (\theta)=\tau    g(\theta)$, we derive that
\begin{align}\label{bj1}
&C_0\left(\theta^2_{xx}(t,1)+\theta^2_{xx}(t,0)\right)\le\frac{\dif}{\dif t}\int_{0}^{1}\frac{\tau  b(x)}{\mu\theta}\left(e_{\theta}\theta_{tx}-\frac{a(\theta)}{z(\theta)}q\theta_{xx}+\frac{R\theta}{v}u_{xx}\right)\theta_{xx}\dif x+\int_{0}^{1}\frac{\tau  }{\mu Z(\theta)\theta}\theta^2_{xx}\dif x\nonumber\\
&-\int_{0}^{1}\frac{\tau  b(x)}{\mu\tau   (\theta)\theta}vq_{xx}\theta_{xx}\dif x+\int_{0}^{1}\frac{\tau  e_\theta}{\theta\mu}\theta^2_{tx}\dif x-\int_{0}^{1}\frac{R\tau  b(x)}{v\mu}u_{xx}\theta_{txx}\dif x+CE^{\frac{1}{2}}(t)D(t)
\end{align}
where the constant $C_0$  is positive and  independent of $\tau   , \tau  $ and $\epsilon$. Note that a third-order term still appears in \eqref{bj1}, but it will be canceled out in conjunction with the estimates for \( u^2_{xx}(t,1) \) and \( u^2_{xx}(t,0) \) in the following part (see \eqref{bj2} below).		

Multiplying the equation $\eqref{xx}_5$ by $\frac{\overline{b}(x)}{\mu}u_{xx}$, we have
{\small 
\begin{align}
\int_{0}^{1}\frac{\tau  }{\mu}S_{txx}\overline{b}(x)u_{xx}\dif x+\frac{1}{\mu}\int_{0}^{1}(vS)_{xx}\overline{b}(x)u_{xx}\dif x-\int_{0}^{1}\overline{b}(x)u_{xxx}u_{xx}\dif x  =-\frac{\tau  \epsilon}{\mu}\int_{0}^{1}(b(x)S_x)_{xx}\overline{b}(x)u_{xx}\dif x
 . \label{new-hu-7}
\end{align}
}
We estimate each term of \eqref{new-hu-7} separately. Firstly, we have
\begin{align*}
&\int_{0}^{1}\frac{\tau  }{\mu}S_{txx}\overline{b}(x)u_{xx}\dif x\\
&=\frac{\dif}{\dif t}\int_{0}^{1}\frac{\tau  }{\mu}\overline{b}(x)S_{xx}u_{xx}\dif x-\int_{0}^{1}\frac{\tau  }{\mu}\overline{b}(x)S_{xx}u_{txx}\dif x\\
&=\frac{\dif}{\dif t}\int_{0}^{1}\frac{\tau  }{\mu}\overline{b}(x)S_{xx}u_{xx}\dif x-\int_{0}^{1}\frac{\tau  }{\mu}\overline{b}(x)\left(u_{tx}+p(v,\theta)_{xx}\right)u_{txx}\dif x\\
&=\frac{\dif}{\dif t}\int_{0}^{1}\frac{\tau  }{\mu}\overline{b}(x)S_{xx}u_{xx}\dif x-\int_{0}^{1}\frac{\tau  }{\mu}\overline{b}(x)p(v,\theta)_{xx}u_{txx}\dif x-\frac{\tau  }{2\mu}\overline{b}(x)u^2_{tx}\big|_{0}^{1}-\int_{0}^{1}\frac{\tau  }{\mu}u^2_{tx}\dif x\\
&\ge \frac{\dif}{\dif t}\int_{0}^{1}\frac{\tau  }{\mu}\overline{b}(x)S_{xx}u_{xx}\dif x-\int_{0}^{1}\frac{\tau  }{\mu}\overline{b}(x)p(v,\theta)_{xx}u_{txx}\dif x-\int_{0}^{1}\frac{\tau  }{\mu}u^2_{tx}\dif x,
\end{align*}
where
\begin{align*}
&-\int_{0}^{1}\frac{\tau  }{\mu}\overline{b}(x)p(v,\theta)_{xx}u_{txx}\dif x\\
=&-\frac{\dif}{\dif t}\int_{0}^{1}\frac{\tau  }{\mu}\overline{b}(x)p(v,\theta)_{xx}u_{xx}\dif x+\int_{0}^{1}\frac{\tau  }{\mu}\overline{b}(x)p(v,\theta)_{txx}u_{xx}\dif x\\
=&-\frac{\dif}{\dif t}\int_{0}^{1}\frac{\tau  }{\mu}\overline{b}(x)p(v,\theta)_{xx}u_{xx}\dif x+\int_{0}^{1}\frac{\tau  }{\mu}\overline{b}(x)\left\{p_{vvv}v^2_xv_t+2p_{vv\theta}v_xv_t\theta_x+p_{vv}v_{xx}v_t+2p_{vv}v_xv_{tx}\right.\\
&\left.+2p_{v\theta}v_{tx}\theta_x+p_{v\theta}v_t\theta_{xx}+p_{vv\theta}v^2_x\theta_t+p_{v\theta}v_{xx}\theta_t+2p_{v\theta}v_x\theta_{tx}+p_vv_{txx}+p_\theta\theta_{txx}\right\}u_{xx}\dif x\\
\ge& -\frac{\dif}{\dif t}\int_{0}^{1}\frac{\tau  }{\mu}\overline{b}(x)p(v,\theta)_{xx}u_{xx}\dif x+\int_{0}^{1}\frac{\tau  }{\mu}\overline{b}(x)\left(p_vv_{txx}+p_\theta\theta_{txx}\right)u_{xx}\dif x-CE^{\frac{1}{2}}(t)\mathcal{D}(t)\\
=& -\frac{\dif}{\dif t}\int_{0}^{1}\frac{\tau  }{\mu}\overline{b}(x)p(v,\theta)_{xx}u_{xx}\dif x+\frac{\tau  }{2\mu}\overline{b}(x)p_vu^2_{xx}\big|_{0}^{1}-\int_{0}^{1}\frac{\tau  }{2\mu}\left(-2p_v+\overline{b}(x)p_{vv}v_x+\overline{b}(x)p_{v\theta}\theta_x\right)u^2_{xx}\dif x\\
&+\int_{0}^{1}\frac{R\tau  }{v\mu}\overline{b}(x)\theta_{txx}u_{xx}\dif x-CE^{\frac{1}{2}}(t)\mathcal{D}(t)\\
\ge& -\frac{\dif}{\dif t}\int_{0}^{1}\frac{\tau  }{\mu}\overline{b}(x)p(v,\theta)_{xx}u_{xx}\dif x+\int_{0}^{1}\frac{\tau  }{\mu}p_vu^2_{xx}\dif x+\int_{0}^{1}\frac{R\tau  }{v\mu}\overline{b}(x)\theta_{txx}u_{xx}\dif x-CE^{\frac{1}{2}}(t)\mathcal{D}(t).
\end{align*}
since $p_v(v,\theta)<0$.

The second and third term of \eqref{new-hu-7} can be estimated as 
\begin{align*}
\frac{1}{\mu}\int_{0}^{1}(vS)_{xx}\overline{b}(x)u_{xx}\dif x
&=\frac{1}{\mu}\int_{0}^{1}\left(v_{xx}S+2v_xS_x+vS_{xx}\right)\overline{b}(x)u_{xx}\dif x\\
&\ge \int_{0}^{1}\frac{v}{\mu}\overline{b}(x)S_{xx}u_{xx}\dif x-CE^{\frac{1}{2}}(t)\mathcal{D}(t),
\end{align*}
and
\begin{align*}
-\int_{0}^{1}\overline{b}(x)u_{xxx}u_{xx}\dif x
=\frac{b(x)}{2}u^2_{xx}\big|_{0}^{1}-\int_{0}^{1}u^2_{xx}\dif x
=\frac{1}{2}\left(u^2_{xx}(t,1)+u^2_{xx}(t,0)\right)-\int_{0}^{1}u^2_{xx}\dif x.
\end{align*}
For the right-hand-side of \eqref{new-hu-7}, one has
\begin{align*}
&-\frac{\tau  \epsilon}{\mu}\int_{0}^{1}(b(x)S_x)_{xx}\overline{b}(x)u_{xx}\dif x\\
=&\frac{\tau  \epsilon}{\mu}\int_{0}^{1}(4S_{xx}+b(x)S_{xxx})b(x)u_{xx}\dif x\\
=&\frac{4\tau  \epsilon}{\mu}\int_{0}^{1}b(x)S_{xx}u_{xx}\dif x+\frac{\tau  \epsilon}{\mu}\int_{0}^{1}b^2(x)S_{xxx}u_{xx}\dif x\\
=&\frac{4\tau  \epsilon}{\mu}\int_{0}^{1}b(x)S_{xx}u_{xx}\dif x+\frac{\tau  \epsilon}{\mu}b^2(x)S_{xx}u_{xx}\big|_{0}^{1}-\frac{\tau  \epsilon}{\mu}\int_{0}^{1}(4b(x)u_{xx}+b^2(x)u_{xxx})S_{xx}\dif x\\
\le& \frac{\tau  \epsilon}{8\mu}\eta (S^2_{xx}(t,0)+S^2_{xx}(t,1))+\frac{2\tau  \epsilon}{\mu}C(\eta)(u^2_{xx}(t,0)+u^2_{xx}(t,1))\\
&-\frac{\tau  \epsilon}{\mu^2}\int_{0}^{1}b^2(x)(\tau  S_{txx}+(vS)_{xx}+\tau  \epsilon(b(x)S_x)_{xx})S_{xx}\dif x\\
\le&\frac{\tau  \epsilon}{8\mu} \eta (S^2_{xx}(t,0)+S^2_{xx}(t,1))+\frac{2\tau  \epsilon}{\mu} C(\eta) (u^2_{xx}(t,0)+u^2_{xx}(t,1))\\
&-\frac{\tau^2  \epsilon}{\mu^2}\frac{\dif}{\dif t}\int_{0}^{1}\frac{b^2(x)}{2}S^2_{xx}\dif x-\frac{\tau  \epsilon}{\mu^2}\int_{0}^{1}b^2(x)vS^2_{xx}\dif x
-(\frac{\tau  \epsilon}{\mu})^2\int_{0}^{1}b^2(x)(4S^2_{xx}+b(x)S_{xxx}S_{xx})\dif x\\
&+\int_{0}^{1}\eta S^2_{xx}\dif x
+CE^{\frac{1}{2}}(t)\mathcal{D}(t)\\
\le& \frac{\tau  \epsilon}{8\mu}\eta (S^2_{xx}(t,0)+S^2_{xx}(t,1))+\frac{2\tau  \epsilon}{\mu}C(\eta) (u^2_{xx}(t,0)+u^2_{xx}(t,1))-\frac{\tau^2  \epsilon}{\mu^2}\frac{\dif}{\dif t}\int_{0}^{1}\frac{b^2(x)}{2}S^2_{xx}\dif x\\
&-(\frac{\tau  \epsilon}{\mu})^2\frac{b^3(x)}{2}S^2_{xx}\big|_{0}^{1}
+\int_{0}^{1}\eta S^2_{xx}\dif x 
+CE^{\frac{1}{2}}(t)\mathcal{D}(t)\\
\le&\frac{\tau  \epsilon}{8\mu}\eta (S^2_{xx}(t,0)+S^2_{xx}(t,1))+\frac{2\tau  \epsilon}{\mu} C(\eta)(u^2_{xx}(t,0)+u^2_{xx}(t,1))-\frac{\tau^2  \epsilon}{\mu^2}\frac{\dif}{\dif t}\int_{0}^{1}\frac{b^2(x)}{2}S^2_{xx}\dif x\\
&+\int_{0}^{1}\eta S^2_{xx}\dif x 
+CE^{\frac{1}{2}}(t)\mathcal{D}(t),
\end{align*}
where $\eta$ is any sufficiently  small constant to be chosen later.

Combining the above estimates, we derive that
\begin{align}\label{bj2}
&\frac{1}{2}\left(u^2_{xx}(t,1)+u^2_{xx}(t,0)\right)
\le \frac{\dif}{\dif t}\int_{0}^{1} \left(\frac{\tau  }{\mu}\overline{b}(x)\left(p(v,\theta)_{xx}-S_{xx}\right)u_{xx}-\frac{\tau^2  \epsilon}{\mu^2}\frac{b^2(x)}{2}S^2_{xx}\right)\dif x \nonumber\\\nonumber
&+\int_{0}^{1}\frac{\tau  }{\mu}\left(u^2_{tx}-p_vu^2_{xx}\right)\dif x+\int_{0}^{1}\frac{R\tau  b(x)}{v\mu}\theta_{txx}u_{xx}\dif x+\int_{0}^{1}\frac{v}{\mu}b(x)S_{xx}u_{xx}\dif x+\int_{0}^{1}u^2_{xx}\dif x\\
&+\int_{0}^{1}\eta S^2_{xx}\dif x+\frac{\tau  \epsilon}{8\mu}(S^2_{xx}(t,0)+S^2_{xx}(t,1))+\frac{2\tau  \epsilon}{\mu} C(\eta)(u^2_{xx}(t,0)+u^2_{xx}(t,1))+CE^{\frac{1}{2}}(t)\mathcal{D}(t) .
\end{align}
Thus, combining inequalities \eqref{bj1} and \eqref{bj2} and noting the only two three-order terms in \eqref{bj1} and \eqref{bj2} cancels each other, we finally derive that
{\small{
\begin{align}
&C_0\left(u^2_{xx}(t,1)+u^2_{xx}(t,0)+\theta^2_{xx}(t,1)+\theta^2_{xx}(t,0)\right) \nonumber\\
&\le \frac{\dif}{\dif t}\int_{0}^{1}\left(\frac{\tau  }{\mu}\overline{b}(x)\left(p(v,\theta)_{xx}-S_{xx}\right)u_{xx}
-\frac{\tau^2  \epsilon}{\mu^2}\frac{b^2(x)}{2}S^2_{xx}+\frac{\tau  b(x)}{\mu\theta}\left(e_{\theta}\theta_{tx}-\frac{a(\theta)}{Z(\theta)}q\theta_{xx}+\frac{R\theta}{v}u_{xx}\right)\theta_{xx}\right)\dif x\nonumber\\
&+\int_{0}^{1}(\frac{\tau  }{\mu}\left(u^2_{tx}-p_vu^2_{xx}\right)+\frac{v}{\mu}b(x)S_{xx}u_{xx}+u^2_{xx}+\eta S^2_{xx}+\frac{\tau  \kappa(\theta)}{\mu \tau   (\theta)\theta}\theta^2_{xx}-\frac{\tau  b(x)}{\mu \tau   (\theta)\theta}vq_{xx}\theta_{xx}+\frac{\tau  e_\theta}{\theta\mu}\theta^2_{tx})\dif x \nonumber \\
& +\frac{\tau  \epsilon}{8\mu}\eta(S^2_{xx}(t,0)+S^2_{xx}(t,1))+\frac{2\tau  \epsilon}{\mu} C(\eta)(u^2_{xx}(t,0)+u^2_{xx}(t,1))+CE^{\frac{1}{2}}(t)\mathcal{D}(t).\label{new-hu-10}
\end{align}
}}
Here,  the constant $C_0$  is positive and  independent of $\tau   , \tau  $ and $\epsilon$. Without loss of generality, let $C_0=1$.  Now, we estimate the integral over $0$ to $t$ of the right-hand-side of the equation \eqref{new-hu-10}.

Firstly, notice that $p(v,\theta)_{xx}-S_{xx}=-u_{tx}$ and $v_{tx}=u_{xx}$, and using Lemma \ref{le3.9}, we have
\begin{align*}
&\int_{0}^{1}\left(\frac{\tau  }{\mu}\overline{b}(x)\left(p(v,\theta)_{xx}-S_{xx}\right)u_{xx}-\frac{\tau^2  \epsilon}{\mu^2}\frac{b^2(x)}{2}S^2_{xx}+\frac{\tau  b(x)}{\mu\theta}\left(e_{\theta}\theta_{tx}-\frac{a(\theta)}{Z(\theta)}q\theta_{xx}+\frac{R\theta}{v}u_{xx}\right)\theta_{xx}\right)\dif x \\
&\le \eta \int_0^1 \theta_{xx}^2 \dif x +C(\eta) \int_0^1 (u_{tx}^2+v_{tx}^2+\theta_{tx}^2 )\dif x +E^\frac{3}{2}(t)\\
&\le \eta \int_0^1 \theta_{xx}^2 \dif x+C(\eta)(E(0)+E^\frac{1}{2}(t) \int_0^t \mathcal D(s)\dif s +E^\frac{3}{2}(t)).
\end{align*}
Similarly,  remembering Corollary \ref{co3.10}, one have
\begin{align*}
&\int_0^t \int_{0}^{1}(\frac{\tau  }{\mu}\left(u^2_{tx}-p_vu^2_{xx}\right)+\frac{v}{\mu}b(x)S_{xx}u_{xx}+u^2_{xx}+\eta S^2_{xx})\dif x \dif t\\
&\le \eta \int_0^t \int_0^1 S_{xx}^2 \dif x \dif t +C(\eta) \epsilon \int_0^t \int_0^1 S^2_{xx}\dif x \dif t
\end{align*}
and
\begin{align*}
&\int_0^t \int_{0}^{1}(\frac{\tau  \kappa(\theta)}{\mu \tau   (\theta)\theta}\theta^2_{xx}-\frac{\tau  b(x)}{\mu \tau   (\theta)\theta}vq_{xx}\theta_{xx}+\frac{\tau  e_\theta}{\theta\mu}\theta^2_{tx})\dif x \dif t\\
&\le \eta \int_0^t \int_0^1 q_{xx}^2\dif x \dif t+ C(\eta)\epsilon \int_0^t \int_0^1 S^2_{xx}\dif x \dif t.
\end{align*}

Therefore, integrating the inequality \eqref{new-hu-10} over $(0,t)$, once obtain the inequality \eqref{new-hu-9} immediately and the proof is finished.
\end{proof}

Using the similar methods, we can estimate the mixed second-order estimates on the boundary. 


\begin{lemma}\label{le-hu-1}
There exists some constant $C$ such that
\begin{align}\label{new-hu-8}
&\int_{0}^{t}(u^2_{tx}(t,1)+u^2_{tx}(t,0)+\theta^2_{tx}(t,1)+\theta^2_{tx}(t,0))\dif t
\le C\left(E_0+E^\frac{1}{2}\int_{0}^{t}\mathcal{D}(s)\dif s \right)+C\epsilon \int_{0}^{t}\int_{0}^{1} S^2_{xx}\dif x \dif t \nonumber\\
&+\frac{\tau  \epsilon}{8\mu}\eta \int_{0}^{t}(S^2_{tx}(t,0)+S^2_{tx}(t,1))\dif t+ \frac{2\tau  \epsilon}{\mu}C(\eta) \int_0^t(u^2_{tx}(t,0)+u^2_{tx}(t,1))\dif t .
\end{align}
\end{lemma}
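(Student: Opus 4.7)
The strategy mirrors Lemma \ref{le-hu-2} exactly, but with the spatial derivative $\partial_{xx}$ replaced by the mixed derivative $\partial_{tx}$ throughout, so we work with the system \eqref{tx} instead of \eqref{xx}. Specifically, multiply the heat flux equation $\eqref{tx}_4$ by the weight $\frac{\tau b(x)}{\mu \tau(\theta)\theta}\theta_{tx}$ and integrate over $[0,1]$. After integration by parts on the term $(\kappa(\theta)\theta_x)_{tx}\cdot\theta_{tx}$, the boundary term $\frac{\tau b(x)\kappa(\theta)}{2\mu\tau(\theta)\theta}\theta_{tx}^2\big|_0^1$ is produced, whose $x$-values at $x=0,1$ are of definite sign and yield $C_0(\theta_{tx}^2(t,0)+\theta_{tx}^2(t,1))$ after time integration. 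The remaining interior terms containing $q_{ttx}$ will be replaced using the differentiated energy equation $\eqref{tx}_3$, producing (after integration by parts in $t$) a total-time-derivative term plus an interior cubic third-order term $\int_0^1 \tfrac{R\tau b(x)}{v\mu} u_{tx}\theta_{ttx}\dif x$ which cannot be absorbed directly.

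To cancel the problematic third-order term, I will multiply the stress equation $\eqref{tx}_5$ by $\frac{\overline{b}(x)}{\mu}u_{tx}$ (with $\overline{b}=-b$) and integrate. Integration by parts on $\int_0^1 u_{txx}u_{tx}\overline{b}(x)\dif x$ produces the desired boundary term $\frac{1}{2}(u_{tx}^2(t,0)+u_{tx}^2(t,1))$. The time-derivative term $\tau S_{ttx}\cdot \overline{b}u_{tx}$ will be moved to a $\frac{\dif}{\dif t}$-form and then, by substituting $S_{tx}-\mu u_{xx}/ \mbox{something}$ via $\eqref{t}_2$ (i.e., using $u_{tt}+p(v,\theta)_{tx}=S_{tx}$, hence $S_{txx}$ can be related to $u_{ttx}$ and $p(v,\theta)_{txx}$), we obtain a second total-derivative plus the matching interior term $-\int_0^1 \frac{R\tau b(x)}{v\mu}\theta_{ttx}u_{tx}\dif x$, with precisely the opposite sign of the bad term in the first estimate. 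Adding the two identities thus cancels the third-order interior contribution.

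The $\epsilon$-boundary term $\frac{\tau\epsilon}{\mu}\int_0^1(b(x)S_x)_{tx}\overline{b}(x)u_{tx}\dif x$ is handled exactly as in Lemma \ref{le-hu-2}: after one integration by parts it produces boundary contributions $\frac{\tau\epsilon}{\mu}b^2(x)S_{tx}u_{tx}\big|_0^1$, controlled by Young's inequality as $\frac{\tau\epsilon}{8\mu}\eta(S_{tx}^2(t,0)+S_{tx}^2(t,1))+\frac{2\tau\epsilon}{\mu}C(\eta)(u_{tx}^2(t,0)+u_{tx}^2(t,1))$, plus an interior term $-\frac{\tau\epsilon}{\mu^2}\int_0^1 b^2(x)(\tau S_{ttx}+(vS)_{tx})S_{tx}\dif x$ that contributes, after another integration by parts and absorption, a harmless $C\epsilon\int_0^t\int_0^1 S_{xx}^2\dif x\dif t$ term (using Lemma \ref{le3.9} to bound $\int S_{tx}^2$).

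The remaining interior terms of the resulting identity are controlled using Lemmas \ref{le3.7}--\ref{le3.9} and Corollary \ref{co3.10}: quantities like $\int_0^t\int_0^1 (u_{tx}^2+v_{tx}^2+\theta_{tx}^2+u_{xx}^2)\dif x\dif t$ are all bounded by $C(E_0+E^{1/2}(t)\int_0^t\mathcal D(s)\dif s)$, while all cubic remainders absorb into $CE^{1/2}(t)\mathcal D(t)$ as in the preceding lemmas. The time-boundary terms at $t=0$ from the total-derivative contributions are controlled by $E_0$ using the compatibility condition $u_t|_{\partial\Omega}=q_t|_{\partial\Omega}=0$ and Assumption \ref{assu}(4). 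The main obstacle is ensuring the correct sign and exact coefficient matching for the third-order cancellation: the weights $\frac{\tau b(x)}{\mu\tau(\theta)\theta}$ and $\frac{\overline{b}(x)}{\mu}$ must be chosen so that the coefficients of $\theta_{ttx}u_{tx}$ in the two identities differ only in sign, which requires carefully tracking the factors arising from $(\kappa(\theta)\theta_x)_{tx}$ and $p_\theta\theta_{txx}=\frac{R\theta}{v}\theta_{txx}+\mathrm{l.o.t.}$; the $\frac{R}{v}$-factor produced at that step is what forces the choice of the weight above, and with this choice the cancellation is exact. Putting everything together yields \eqref{new-hu-8}.
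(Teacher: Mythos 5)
Your proposal follows essentially the same route as the paper's proof: the same weighted multiplier $\frac{\tau b(x)}{\mu\tau(\theta)\theta}\theta_{tx}$ on $\eqref{tx}_4$ to extract the $\theta_{tx}$ boundary trace, the same multiplier $\frac{\overline{b}(x)}{\mu}u_{tx}$ on $\eqref{tx}_5$ to extract the $u_{tx}$ boundary trace, the same substitution of the momentum equation $\eqref{t}_2$ to generate the term $-\int_0^1\frac{R\tau b(x)}{\mu v}\theta_{ttx}u_{tx}\,\dif x$ that exactly cancels the third-order interior term from the heat-flux estimate, and the same Young-inequality treatment of the $\epsilon$-boundary contributions. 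The only slip is a label: the third-order quantity produced in the first estimate is $q_{ttx}$, which must be replaced via the twice-time-differentiated energy equation $\eqref{tt}_3$ (not $\eqref{tx}_3$, which contains $q_{txx}$); since you correctly identify the term and the mechanism, this is a bookkeeping issue rather than a gap.
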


\begin{proof}
Multiplying the equation $\eqref{tx}_4$ by$\frac{\tau  b(x)}{\mu\tau   (\theta)\theta}\theta_{tx}$, we have
\begin{align}\label{new-hu-11}
\int_{0}^{1}\frac{\tau  b(x)}{\mu\tau   (\theta)\theta}\left(\tau   (\theta)q_t\right)_{tx}\theta_{tx}\dif x+\int_{0}^{1}\frac{\tau  b(x)}{\mu\tau   (\theta)\theta}(vq)_{tx}\theta_{tx}\dif x+\int_{0}^{1}\frac{\tau  b(x)}{\mu\tau   (\theta)\theta}(\kappa(\theta)\theta_x)_{tx}\theta_{tx}\dif x=0.
\end{align}
We estimate each term of the equation \eqref{new-hu-11} separately. 
First, we have
\begin{align*}
&\int_{0}^{1}\frac{\tau  b(x)}{\mu\tau   (\theta)\theta}\left(\tau   (\theta)q_t\right)_{tx}\theta_{tx}\dif x\\
&=\int_{0}^{1}\frac{\tau  b(x)}{\mu\tau   (\theta)\theta}\left(\tau   ^{\prime\prime}(\theta)\theta_t\theta_xq_t+\tau   ^\prime(\theta)\theta_{tx}q_t+\tau   ^\prime(\theta)\theta_tq_{tx}+\tau   ^\prime(\theta)\theta_{tx}q_t+\tau   ^\prime(\theta)\theta_xq_{tt}+\tau   (\theta)q_{ttx}\right)\theta_{tx}\dif x\\
&\ge \int_{0}^{1}\frac{\tau  b(x)}{\mu\theta}q_{ttx}\theta_{tx}\dif x-CE^{\frac{1}{2}}(t)\mathcal{D}(t).
\end{align*}
Note that, by using $\eqref{tt}_3$, one get
\begin{align*}
q_{ttx}=(\frac{2a(\theta)}{\tau   (\theta)}q^2v)_{tt}+(\frac{S^2v}{\mu})_{tt}-(e_{\theta}\theta_t)_{tt}+(\frac{2a(\theta)}{Z(\theta)}q\theta_x)_{tt}-(\frac{R\theta u_x}{v})_{tt}.
\end{align*}
So, we have
{\small{
\begin{align*}
&\int_{0}^{1}\frac{\tau  b(x)}{\mu\theta}q_{ttx}\theta_{tx}\dif x\\
\ge&\int_{0}^{1}\frac{\tau  b(x)}{\mu\theta}\left(-e_{\theta}\theta_{ttt}+\frac{2a(\theta)}{Z(\theta)}q\theta_{ttx}-\frac{R\theta}{v}u_{ttx}\right)\theta_{tx}\dif x-CE^{\frac{1}{2}}(t)\mathcal{D}(t)\\
\ge&-\frac{\dif}{\dif t}\int_{0}^{1}\frac{\tau  b(x)}{\mu\theta}\left(e_{\theta}\theta_{tt}-\frac{a(\theta)}{Z(\theta)}q\theta_{tx}+\frac{R\theta}{v}u_{tx}\right)\theta_{tx}\dif x\\
&+\int_{0}^{1}\left\{-\frac{\tau  b(x)\theta_t}{\mu\theta^2}\left(e_{\theta}\theta_{tt}-\frac{a(\theta)}{Z(\theta)}q\theta_{tx}+\frac{R\theta}{v}u_{tx}\right)\theta_{tx}\right.\\
&+\frac{\tau  b(x)}{\mu\theta}\left(e_{\theta\theta}\theta_t\theta_{tt}+e_{\theta q}q_t\theta_{tt}-\frac{a^\prime(\theta)Z(\theta)-a(\theta)Z^\prime(\theta)}{Z^2(\theta)}q\theta_t\theta_{tx} -\frac{a(\theta)}{Z(\theta)}q_t\theta_{tx}+\frac{R\theta_t}{v}u_{tx}-\frac{R\theta}{v^2}v_tu_{xx}\right)\theta_{tx}\\
&\left.+\frac{\tau  b(x)}{\mu\theta}e_\theta\theta_{tt}\theta_{ttx}+\frac{R\tau  b(x)}{\mu v}u_{tx}\theta_{ttx}\right\}\dif x-CE^{\frac{1}{2}}(t)\mathcal{D}(t)\\
\ge&-\frac{\dif}{\dif t}\int_{0}^{1}\frac{\tau  b(x)}{\mu\theta}\left(e_{\theta}\theta_{tt}-\frac{a(\theta)}{Z(\theta)}q\theta_{tx}+\frac{R\theta}{v}u_{tx}\right)\theta_{tx}\dif x\\
&+\int_{0}^{1}\frac{\tau  b(x)}{\mu\theta}e_\theta\theta_{tt}\theta_{ttx}\dif x+\int_{0}^{1}\frac{R\tau  b(x)}{\mu v}u_{tx}\theta_{ttx}\dif x-CE^{\frac{1}{2}}(t)\mathcal{D}(t),
\end{align*}
}}
where
\begin{align*}
&\int_{0}^{1}\frac{\tau  b(x)}{\mu\theta}e_\theta\theta_{tt}\theta_{ttx}\dif x\\
&=\frac{1}{2}\frac{\tau  b(x)}{\mu\theta}e_\theta\theta^2_{tt}\big|_{0}^{1}-\int_{0}^{1}\frac{\tau  }{2\mu}\left(\frac{b(x)e_\theta}{\theta}\right)_x\theta^2_{tt}\dif x\\
&\ge-\int_{0}^{1}\frac{\tau  }{2\mu}\left(\frac{2e_\theta}{\theta}-\frac{\theta_xb(x)e_\theta}{\theta^2}+\frac{b(x)e_{\theta\theta}\theta_x}{\theta}+\frac{b(x)e_{q\theta}q_x}{\theta}\right)\theta^2_{tt}\dif x\\
&\ge -\int_{0}^{1}\frac{\tau  e_\theta}{\mu\theta}\theta^2_{tt}\dif x-CE^{\frac{1}{2}}(t)\mathcal{D}(t).
\end{align*}

The second left term of \eqref{new-hu-11} can be estimated as follows:
\begin{align*}
\int_{0}^{1}\frac{\tau  b(x)}{\mu\tau   (\theta)\theta}(vq)_{tx}\theta_{tx}\dif x
&=\int_{0}^{1}\frac{\tau  b(x)}{\mu\tau   (\theta)\theta}\left(v_{tx}q+v_xq_t+v_tq_x+vq_{tx}\right)\theta_{tx}\dif x\\
&\ge\int_{0}^{1}\frac{\tau  b(x)}{\mu\tau   (\theta)\theta}vq_{tx}\theta_{tx}\dif x-CE^{\frac{1}{2}}(t)\mathcal{D}(t),
\end{align*}
and the last left term of \eqref{new-hu-11} as 
\begin{align*}
&\int_{0}^{1}\frac{\tau  b(x)}{\mu\tau   (\theta)\theta}(\kappa(\theta)\theta_x)_{tx}\theta_{tx}\dif x\\
=&\int_{0}^{1}\frac{\tau  b(x)}{\mu\tau   (\theta)\theta}\left(\kappa^{\prime\prime}(\theta)\theta_t\theta^2_x+2\kappa^\prime(\theta)\theta_x\theta_{tx}+\kappa^\prime(\theta)\theta_t\theta_{xx}+\kappa(\theta)\theta_{txx}\right)\theta_{tx}\dif x\\
\ge&\int_{0}^{1}\frac{\tau  b(x)\kappa (\theta)}{\mu \tau   (\theta)\theta}\theta_{txx}\theta_{tx}\dif x-CE^{\frac{1}{2}}(t)\mathcal{D}(t)\\
=&\frac{\tau  }{2\mu}\frac{b(x)\kappa(\theta)}{\tau   (\theta)\theta}\theta^2_{tx}\bigg|_{0}^{1}-\frac{\tau  }{2\mu}\int_{0}^{1}\left(\frac{2}{Z(\theta)\theta}-\frac{Z^\prime(\theta)b(x)}{Z^2(\theta)\theta}\theta_x-\frac{b(x)}{Z(\theta)\theta^2}\theta_x\right)\theta^2_{tx}\dif x-CE^{\frac{1}{2}}(t)D(t)\\
\ge& C_0(\theta^2_{tx}(t,1)+\theta^2_{tx}(t,0)-\int_{0}^{1}\frac{\tau  }{\mu Z(\theta)\theta}\theta^2_{tx}\dif x-CE^{\frac{1}{2}}(t)D(t).
\end{align*}
where we use the fact $\tau   (\theta)=\tau    g(\theta)$ and the assumption $\tau   =\tau  $. $C_0$ is a positive constant. 

Combining the above estimates,   we derive that
\begin{align}
&C_0(\theta^2_{tx}(t,1)+\theta^2_{tx}(t,0)\nonumber
\\\le\nonumber
& \frac{\dif}{\dif t}\int_{0}^{1}\frac{\tau  b(x)}{\mu\theta}\left(e_{\theta}\theta_{tt}-\frac{a(\theta)}{z(\theta)}q\theta_{tx}+\frac{R\theta}{v}u_{tx}\right)\theta_{tx}\dif x-\int_{0}^{1}\frac{R\tau  b(x)}{\mu v}u_{tx}\theta_{ttx}\dif x+\int_{0}^{1}\frac{\tau  e_\theta}{\mu\theta}\theta^2_{tt}\dif x
\\
&-\int_{0}^{1}\frac{\tau  b(x)}{\mu\tau   (\theta)\theta}vq_{tx}\theta_{tx}\dif x+\int_{0}^{1}\frac{\tau  }{\mu Z(\theta)\theta}\theta^2_{tx}\dif x+CE^{\frac{1}{2}}(t)D(t). \label{bj3}
\end{align}
Notice that, there is still a third-order term in above inequality. As we did in Lemma \ref{le-hu-2}, these high-order terms will be canceled if one combines \eqref{bj3} with the estimates \eqref{bj4} below.

Now, we control the boundary term $u_{tx}|_{\partial \Omega}$.
To do this, multiplying the equation $\eqref{tx}_5$ by $\frac{\overline{b}(x)}{\mu}u_{tx}$, we have
\begin{align}
\int_{0}^{1}\frac{\tau  }{\mu}S_{ttx}\overline{b}(x)u_{tx}\dif x+\frac{1}{\mu}\int_{0}^{1}(vS)_{tx}\overline{b}(x)u_{tx}\dif x-\int_{0}^{1}\overline{b}(x)u_{txx}u_{tx}\dif x 
=-\frac{\tau  \epsilon}{\mu}\int_{0}^{1}(b(x)S_x)_{tx}\overline{b}(x)u_{tx}\dif x . \label{new-hu-12}
\end{align}

For the first left term of \eqref{new-hu-12}, we have
\begin{align*}
&\int_{0}^{1}\frac{\tau  }{\mu}S_{ttx}\overline{b}(x)u_{tx}\dif x\\
&=\frac{\dif}{\dif t}\int_{0}^{1}\frac{\tau  }{\mu}\overline{b}(x)S_{tx}u_{tx}\dif x-\int_{0}^{1}\frac{\tau  }{\mu}\overline{b}(x)S_{tx}u_{ttx}\dif x\\
&=\frac{\dif}{\dif t}\int_{0}^{1}\frac{\tau  }{\mu}\overline{b}(x)S_{tx}u_{tx}\dif x-\int_{0}^{1}\frac{\tau  }{\mu}\overline{b}(x)\left(u_{tt}+p(v,\theta)_{tx}\right)u_{ttx}\dif x\\
&=\frac{\dif}{\dif t}\int_{0}^{1}\frac{\tau  }{\mu}\overline{b}(x)S_{tx}u_{tx}\dif x-\int_{0}^{1}\frac{\tau  }{\mu}\overline{b}(x)p(v,\theta)_{tx}u_{ttx}\dif x-\frac{\tau  }{2\mu}\overline{b}(x)u^2_{tt}\big|_{0}^{1}-\int_{0}^{1}\frac{\tau  }{\mu}u^2_{tt}\dif x\\
&\ge \frac{\dif}{\dif t}\int_{0}^{1}\frac{\tau  }{\mu}\overline{b}(x)S_{tx}u_{tx}\dif x-\int_{0}^{1}\frac{\tau  }{\mu}\overline{b}(x)p(v,\theta)_{tx}u_{ttx}\dif x-\int_{0}^{1}\frac{\tau  }{\mu}u^2_{tt}\dif x,
\end{align*}
where
\begin{align*}
&-\int_{0}^{1}\frac{\tau  \overline{b}(x)}{\mu}p(v,\theta)_{tx}u_{ttx}\dif x\\
=&-\frac{\dif}{\dif t}\int_{0}^{1}\frac{\tau  \overline{b}(x)}{\mu}p(v,\theta)_{tx}u_{tx}\dif x+\int_{0}^{1}\frac{\tau  \overline{b}(x)}{\mu}p(v,\theta)_{ttx}u_{tx}\dif x\\
=&-\frac{\dif}{\dif t}\int_{0}^{1}\frac{\tau  \overline{b}(x)}{\mu}p(v,\theta)_{tx}u_{tx}\dif x+\int_{0}^{1}\frac{\tau  \overline{b}(x)}{\mu}\{p_{vvv}v^2_tv_x+2p_{vv\theta}v_tv_x\theta_t+p_{vv\theta}v^2_t\theta_x+2p_{vv}v_{tx}v_t\\
&+p_{vv}v_{tt}v_x+p_{v\theta}v_x\theta_{tt}+2p_{v\theta}v_{tx}\theta_t+p_{v\theta}v_{tt}\theta_x+2p_{v\theta}v_t\theta_{tx}+p_vv_{ttx}+p_\theta\theta_{ttx}\}u_{tx}\dif x\\
\ge& -\frac{\dif}{\dif t}\int_{0}^{1}\frac{\tau  \overline{b}(x)}{\mu}p(v,\theta)_{tx}u_{tx}\dif x+\int_{0}^{1}\frac{\tau  \overline{b}(x)}{\mu}\left(p_vv_{ttx}+p_\theta\theta_{ttx}\right)u_{tx}\dif x-CE^{\frac{1}{2}}(t)\mathcal{D}(t)\\
=& -\frac{\dif}{\dif t}\int_{0}^{1}\frac{\tau  \overline{b}(x)}{\mu}p(v,\theta)_{tx}u_{tx}\dif x+\frac{\tau  \overline{b}(x)}{2\mu}p_vu^2_{tx}\big|_{0}^{1}-\int_{0}^{1}\frac{\tau  }{2\mu}\left(-2p_v+\overline{b}(x)p_{vv}v_x+\overline{b}(x)p_{v\theta}\theta_x\right)u^2_{tx}\dif x\\
&+\int_{0}^{1}\frac{R\tau  }{\mu v}\overline{b}(x)\theta_{ttx}u_{tx}\dif x-CE^{\frac{1}{2}}(t)\mathcal{D}(t)\\
\ge& -\frac{\dif}{\dif t}\int_{0}^{1}\frac{\tau  \overline{b}(x)}{\mu}p(v,\theta)_{tx}u_{tx}\dif x+\int_{0}^{1}\frac{\tau  }{\mu}p_vu^2_{tx}\dif x+\int_{0}^{1}\frac{R\tau  }{\mu v}\overline{b}(x)\theta_{ttx}u_{tx}\dif x-CE^{\frac{1}{2}}(t)\mathcal{D}(t),
\end{align*}
where we use the fact $p_v(v,\theta)<0$.

The second and third left terms of \eqref{new-hu-12} can be estimates as
\begin{align*}
\frac{1}{\mu}\int_{0}^{1}(vS)_{tx}\overline{b}(x)u_{tx}\dif x
&=\int_{0}^{1}\frac{\overline{b}(x)}{\mu}\left(v_{tx}S+v_xS_t+v_tS_x+vS_{tx}\right)u_{tx}\dif x\\
&\ge \int_{0}^{1}\frac{v\overline{b}(x)}{\mu}S_{tx}u_{tx}\dif x-CE^{\frac{1}{2}}(t)\mathcal{D}(t),
\end{align*}
and 
\begin{align*}
-\int_{0}^{1}\overline{b}(x)u_{txx}u_{tx}\dif x
=\frac{b(x)}{2}u^2_{tx}\big|_{0}^{1}-\int_{0}^{1} u^2_{tx}\dif x
=\frac{1}{2}\left(u^2_{tx}(t,1)+u^2_{tx}(t,0)\right)-\int_{0}^{1} u^2_{tx}\dif x.
\end{align*}
For the right-hand of the equation \eqref{new-hu-12}, we derive that
{\small
\begin{align*}
&-\frac{\tau  \epsilon}{\mu}\int_{0}^{1}(b(x)S_x)_{tx}\overline{b}(x)u_{tx}\dif x\\
=&\frac{\tau  \epsilon}{\mu}\int_{0}^{1}(2S_{tx}+b(x)S_{txx})b(x)u_{tx}\dif x\\
=&\frac{2\tau  \epsilon}{\mu}\int_{0}^{1}b(x)S_{tx}u_{tx}\dif x+\frac{\tau  \epsilon}{\mu} b^2(x)S_{tx}u_{tx}\big|_{0}^{1}-\frac{\tau  \epsilon}{\mu}\int_{0}^{1}(4b(x)u_{tx}+b^2(x)u_{txx})S_{tx}\dif x\\
\le&\frac{-2\tau  \epsilon}{\mu}\int_{0}^{1}b(x)S_{tx}u_{tx}\dif x+ \frac{\tau  \epsilon}{8\mu} \eta (S^2_{tx}(t,0)+S^2_{tx}(t,1))+\frac{2\tau  \epsilon}{\mu}C(\eta)(u^2_{tx}(t,0)+u^2_{tx}(t,1))\\
&-\frac{\tau  \epsilon}{\mu^2}\int_{0}^{1}b^2(x)(\tau  S_{ttx}+(vS)_{tx}+\tau  \epsilon(b(x)S_x)_{tx} )S_{tx}\dif x\\
\le&-\frac{2\tau  \epsilon}{\mu}\int_{0}^{1}b(x)S_{tx}u_{tx}\dif x+\frac{\tau  \epsilon}{8\mu} \eta (S^2_{tx}(t,0)+S^2_{tx}(t,1))+\frac{2\tau  \epsilon}{\mu}C(\eta) (u^2_{tx}(t,0)+u^2_{tx}(t,1))\\
&-\frac{\tau^2  \epsilon}{\mu^2}\frac{\dif}{\dif t}\int_{0}^{1}\frac{b^2(x)}{2}S^2_{tx}\dif x-\frac{\tau  \epsilon}{\mu^2}\int_{0}^{1}b^2(x)vS^2_{tx}\dif x-(\frac{\tau  \epsilon}{\mu})^2\int_{0}^{1}b^2(x)(2S^2_{tx}+b(x)S_{txx}S_{tx})\dif x\\
&+\int_{0}^{1}S^2_{tx}\dif x +CE^{\frac{1}{2}}(t)\mathcal{D}(t)\\
\le&-\frac{2\tau  \epsilon}{\mu}\int_{0}^{1}b(x)S_{tx}u_{tx}\dif x+ \frac{\tau  \epsilon}{8\mu}\eta (S^2_{tx}(t,0)+S^2_{tx}(t,1))+\frac{2\tau  \epsilon}{\mu}C(\eta)(u^2_{tx}(t,0)+u^2_{tx}(t,1))\\
&-\frac{\tau^2  \epsilon}{\mu^2}\frac{\dif}{\dif t}\int_{0}^{1}\frac{b^2(x)}{2}S^2_{tx}\dif x-(\frac{\tau  \epsilon}{\mu})^2\frac{b^3(x)}{2}S^2_{tx}\big|_{0}^{1}+\int_{0}^{1} S^2_{tx}\dif x 
+CE^{\frac{1}{2}}(t)\mathcal{D}(t)\\
\le&-2\tau  \epsilon\int_{0}^{1}b(x)S_{tx}u_{tx}\dif x+ \frac{\tau  \epsilon}{8\mu}\eta (S^2_{tx}(t,0)+S^2_{tx}(t,1))+\frac{2\tau  \epsilon}{\mu}C(\eta)(u^2_{tx}(t,0)+u^2_{tx}(t,1))\\
&-\frac{\tau^2  \epsilon}{\mu}\frac{\dif}{\dif t}\int_{0}^{1}\frac{b^2(x)}{2}S^2_{tx}\dif x+\int_{0}^{1} S^2_{tx}\dif x 
+CE^{\frac{1}{2}}(t)\mathcal{D}(t).
\end{align*}
}
Combining the above estimates, we derive that
\begin{align}\label{bj4}
&\frac{1}{2}\left(u^2_{tx}(t,1)+u^2_{tx}(t,0)\right)
\le \frac{\dif}{\dif t}\int_{0}^{1}\frac{\tau  \overline{b}(x)}{\mu}\left(p(v,\theta)_{tx}-S_{tx}\right)u_{tx}-\frac{\tau^2  \epsilon}{\mu^2}\frac{b^2(x)}{2}S^2_{tx}\dif x\nonumber\\
&+\int_{0}^{1}\frac{R\tau  }{\mu v}b(x)\theta_{ttx}u_{tx}\dif x+\int_{0}^{1}((1+\frac{R\theta}{\mu v^2}\tau  )u^2_{tx}+\frac{\tau  }{\mu}u^2_{tt}+\frac{1}{\mu}S^2_{tx})\dif x \nonumber\\
&+\int_{0}^{1}\frac{(v-2\tau  \epsilon)b(x)}{\mu}S_{tx}u_{tx}\dif x+\frac{\tau  \epsilon}{8\mu}\eta(S^2_{tx}(t,0)+S^2_{tx}(t,1))+\frac{2\tau  \epsilon}{\mu}C(\eta)(u^2_{tx}(t,0)+u^2_{tx}(t,1))\nonumber\\
&+ CE^{\frac{1}{2}}(t)\mathcal{D}(t).
\end{align}
Thus, combining $\eqref{bj3}-\eqref{bj4}$, we can get 
\begin{align*}
&C_0\left(u^2_{tx}(t,1)+u^2_{tx}(t,0)+\theta^2_{tx}(t,1)+\theta^2_{tx}(t,0)\right)\\
\le&\frac{\dif}{\dif t}\int_{0}^{1}(\frac{\tau  \overline{b}(x)}{\mu}\left(p(v,\theta)_{tx}-S_{tx}\right)u_{tx}-\frac{\tau^2  \epsilon}{\mu^2}\frac{b^2(x)}{2}S^2_{tx}+\frac{\tau  b(x)}{\mu\theta}\left(e_{\theta}\theta_{tt}-\frac{a(\theta)}{Z(\theta)}q\theta_{tx}+\frac{R\theta}{v}u_{tx}\right)\theta_{tx})\dif x\\ 
&+\int_{0}^{1}(\frac{\tau  }{\mu}u^2_{tt}+(1+\frac{R\theta}{\mu v^2}\tau  )u^2_{tx}+\frac{(v-2\tau  \epsilon)b(x)}{\mu}S_{tx}u_{tx}+\frac{ 1}{\mu}S_{tx}^2)\dif x\\
&+\int_{0}^{1}(\frac{\tau  e_\theta}{\mu\theta}\theta^2_{tt}-\frac{\tau  b(x)}{\mu \tau   (\theta)\theta}vq_{tx}\theta_{tx}
+\frac{\tau  \kappa(\theta)}{\mu \tau   (\theta)\theta}\theta^2_{tx})\dif x+\frac{\tau  \epsilon}{8\mu}\eta(S^2_{tx}(t,0)+S^2_{tx}(t,1))\\
&+\frac{2\tau  \epsilon}{\mu}C(\eta)(u^2_{tx}(t,0)+u^2_{tx}(t,1))
+ CE^{\frac{1}{2}}(t)\mathcal{D}(t).
\end{align*}
Integrating the above result over $(0,t)$,  using similar method as we did in \eqref{new-hu-10}, once obtain the inequality \eqref{new-hu-8} immediately and the proof is finished.
\end{proof}

Until now, the only left boundary term is $q_{xx}\big|_{0}^{1}$, which can be shown to be controlled by $u_{xx}|_{\partial \Omega}$ by using the equation $\eqref{x}_3$. Indeed,
multiplying the equation $\eqref{x}_3$ by $q_{xx}$, and evaluated at the point $x=1$, we have
\begin{align*}
q^2_{xx}(t,1)=&[(\frac{2a(\theta)}{\tau   (\theta)}q^2v)_xq_{xx}](t,1)+[(\frac{S^2v}{\mu})_xq_{xx}](t,1)-[(e_{\theta}\theta_t)_xq_{xx}](t,1)\\
&+[(\frac{2a(\theta)}{Z(\theta)}q\theta_x)_xq_{xx}](t,1)-[(\frac{R\theta u_x}{v})_xq_{xx}](t,1).
\end{align*}
Note that the first term on the right-hand side of the above equation vanishes:
\begin{align*}
&[(\frac{2a(\theta)}{\tau   (\theta)}q^2v)_xq_{xx}](t,1)\\
=&[\left(\frac{2a^\prime(\theta)}{\tau   (\theta)}\theta_xq^2v-\frac{2a(\theta)\tau   ^\prime(\theta)}{\tau   (\theta)}\theta_xq^2v+\frac{2a(\theta)}{\tau   (\theta)}2qq_xv+\frac{2a(\theta)}{\tau   (\theta)}q^2v_x\right)q_{xx}](t,1)=0,
\end{align*}
For the second term, from $\eqref{approximate}_5$, we have
\begin{align*}
[(\frac{S^2v}{\mu})_xq_{xx}](t,1)&=[\left(2SS_xv+S^2v_x\right)\frac{q_{xx}}{\mu}](t,1)\\
\le &\frac{1}{8} q_{xx}^2(t,1)+C(2SvS_x+S^2v_x)^2(t,1)\\
=&\frac{1}{8} q_{xx}^2(t,1)+C(4S^2v^2S_x^2+2S^3vS_xv_x+S^4v_x^2)(t,1)\\
\le & \frac{1}{8} q_{xx}^2(t,1)+CE^\frac{1}{2} \mathcal D(t)
 \end{align*}
 where we use the fact $|S|_{L^\infty} \le C |\mu u_x-\tau   (S_t+\epsilon S_x)|_{L^\infty}\le C E^\frac{1}{2}$.
 
Similarly, for the remaining three terms on the right-hand of the equation, by  noting that $(q, \theta_x) \big|_{\partial\Omega}=0$, one has
\begin{align*}
&\left[-(e_{\theta}\theta_t)_xq_{xx}+(\frac{2a(\theta)}{Z(\theta)}q\theta_x)_xq_{xx}-(\frac{R\theta u_x}{v})_xq_{xx}\right](t,1)\\
=&\left[(e_{\theta\theta}\theta_x\theta_t+e_{\theta q}q_x\theta_t+e_\theta\theta_{tx}+\frac{2a^\prime(\theta)}{Z(\theta)}q\theta^2_x-\frac{2a(\theta)Z^\prime(\theta)}{Z^2(\theta)}q\theta^2_x+\frac{2a(\theta)}{Z(\theta)}q_x\theta_x+\frac{2a(\theta)}{Z(\theta)}q\theta_{xx}\right.\\
&\left.+\frac{R\theta_xu_x}{v}+\frac{R\theta u_{xx}}{v}-\frac{R\theta u_xv_x}{v^2})q_{xx}\right](t,1)\\
\le&\frac{1}{8}  q_{xx}^2(t,1)+C\left(\frac{2a(\theta)}{Z(\theta)}q_x\theta_x+\frac{R\theta_xu_x}{v}+\frac{R\theta u_{xx}}{v}-\frac{R\theta u_xv_x}{v^2}\right)^2(t,1)\\
\le&\frac{1}{8} q_{xx}^2(t,1)+C\left(u^2_{xx}(t,1)+E^{\frac{1}{2}}(t)\mathcal{D}(t)\right).
\end{align*}
Combining the above estimates, we get
\begin{align}\label{q_{xx}(t,1)}
q^2_{xx}(t,1)\le C\left(u^2_{xx}(t,1)+E^{\frac{1}{2}}(t)\mathcal{D}(t)\right) .
\end{align}
In the same way, we have
\begin{align}\label{q_{xx}(t,0)}
q^2_{xx}(t,0)\le C\left(u^2_{xx}(t,0)+E^{\frac{1}{2}}(t)\mathcal{D}(t)\right).
\end{align}

Therefore, combining Lemmas \ref{le-hu-2}, \ref{le-hu-1} and \eqref{q_{xx}(t,1)}-\eqref{q_{xx}(t,0)}, we get 
\begin{lemma}\label{le3.14}
There exists some constant $C$ such that
\begin{align}
&\int_{0}^{t}(u^2_{xx}(t,1)+u^2_{xx}(t,0)+\theta^2_{xx}(t,1)+\theta^2_{xx}(t,0)+u^2_{tx}(t,1)+u^2_{tx}(t,0)+q^2_{xx}(t,1)+q^2_{xx}(t,0))\dif t\nonumber\\
&\le C \eta \left(\int_{0}^{1}\theta^2_{xx}\dif x +\int_0^t\int_0^1 (q_{xx}^2+S_{xx}^2)\dif x \dif t\right) +C(\eta) \epsilon \int_{0}^{t}\int_{0}^{1}S^2_{xx}\dif x \dif t  \nonumber\\
&+C\frac{\tau  \epsilon}{8\mu} \eta \int_{0}^{t}(S^2_{xx}(t,0)+S^2_{xx}(t,1)+S_{tx}^2(t,0)+S_{tx}^2(t,1))\dif t \nonumber\\
&+\frac{2\tau  \epsilon}{\mu} C(\eta)\int_0^t (u^2_{xx}(t,0)+u^2_{xx}(t,1)+u_{tx}^2(t,0)+u_{tx}^2(t,1))\dif t \nonumber\\
&+ C\left(E_0+E^\frac{1}{2}\int_{0}^{t}\mathcal{D}(s)\dif s+E^\frac{3}{2}(t) \right).  \label{bjtxxx}
\end{align}
 \end{lemma}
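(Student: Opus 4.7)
\medskip

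\noindent\textbf{Proof proposal for Lemma \ref{le3.14}.}

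The plan is to simply sum the three ingredients already produced: the boundary bound of Lemma \ref{le-hu-2} for $(u_{xx}^2+\theta_{xx}^2)|_{\partial\Omega}$, the boundary bound of Lemma \ref{le-hu-1} for $(u_{tx}^2+\theta_{tx}^2)|_{\partial\Omega}$, and the pointwise identities \eqref{q_{xx}(t,1)}--\eqref{q_{xx}(t,0)} for $q_{xx}^2|_{\partial\Omega}$. Since $\theta_{tx}^2|_{\partial\Omega}$ is estimated above by Lemma \ref{le-hu-1} and only enters on the left of the desired inequality as a nonnegative quantity that we are free to drop, no rearrangement is necessary for that term; what actually must be tracked carefully is how the small parameters $\eta$ and $\epsilon$ on the right-hand sides combine, because this is what will later allow absorption into the dissipation in Proposition \ref{zong}.

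First I would add \eqref{new-hu-9} and \eqref{new-hu-8} directly. The resulting inequality estimates $\int_0^t(u_{xx}^2+\theta_{xx}^2+u_{tx}^2+\theta_{tx}^2)|_{\partial\Omega}\,\dif t$ by a common right-hand side of the form
\begin{equation*}
C\eta\Bigl(\|\theta_{xx}(t)\|_{L^2}^2+\int_0^t\|(q_{xx},S_{xx})\|_{L^2}^2\,\dif s\Bigr)
+C(\eta)\epsilon\int_0^t\|S_{xx}\|_{L^2}^2\,\dif s+(\text{boundary terms with }\tau\epsilon\text{ coefficients})
+C\bigl(E_0+E^{1/2}\smallint_0^t\mathcal D+E^{3/2}\bigr).
\end{equation*}
Here the only mild subtlety is that the $\tau\epsilon$-weighted boundary terms on $S$ and $u$ appearing in both lemmas add; since $\eta$ can be taken arbitrarily small and $\tau\epsilon\le 1$, the $S$-boundary piece will eventually be absorbed by the positive boundary dissipation $\tfrac{3\tau\epsilon}{8\mu}S_{**}^2|_{\partial\Omega}$ produced in \eqref{ejtx} and \eqref{ejxx}.

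Next I would integrate the pointwise bounds \eqref{q_{xx}(t,1)}--\eqref{q_{xx}(t,0)} in time to obtain
\begin{equation*}
\int_0^t\bigl(q_{xx}^2(s,0)+q_{xx}^2(s,1)\bigr)\dif s\le C\int_0^t\bigl(u_{xx}^2(s,0)+u_{xx}^2(s,1)\bigr)\dif s+CE^{1/2}(t)\int_0^t\mathcal D(s)\dif s,
\end{equation*}
and then replace the $\int_0^t u_{xx}^2|_{\partial\Omega}$ factor by the estimate just obtained in the previous paragraph. This produces the desired $q_{xx}^2|_{\partial\Omega}$ term on the left with no new uncontrolled contribution on the right, since the coefficient $C$ multiplying the Lemma \ref{le-hu-2} bound is universal and the structure of the right-hand side is preserved (only the numerical value of the constant $C(\eta)$ is increased). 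Adding this integrated pointwise inequality to the sum of \eqref{new-hu-9}+\eqref{new-hu-8} yields \eqref{bjtxxx}.

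The step I expect to be slightly delicate is bookkeeping: one must ensure that after the substitution of the $u_{xx}^2|_{\partial\Omega}$ bound into the $q_{xx}^2|_{\partial\Omega}$ control, the $\eta$-small and $\tau\epsilon$-small factors on the right-hand side are still of the same form as claimed in \eqref{bjtxxx}, so that in the next stage of the argument they can be absorbed by the left-hand side (for the $\eta$-terms involving $\|\theta_{xx}\|_{L^2}^2$, $\|q_{xx}\|_{L^2}^2$, $\|S_{xx}\|_{L^2}^2$) or by the dissipation produced in Lemmas \ref{le3.9} and \ref{le3.11} (for the $\tau\epsilon$-weighted boundary $S_{**}^2$ and $u_{**}^2$ pieces). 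No new analytic ingredient is required beyond what has already been established.
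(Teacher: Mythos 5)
Your proposal is correct and coincides with the paper's own (essentially one-line) argument: the authors likewise obtain \eqref{bjtxxx} by adding \eqref{new-hu-9} and \eqref{new-hu-8}, integrating \eqref{q_{xx}(t,1)}--\eqref{q_{xx}(t,0)} in time, and re-inserting the bound on $\int_0^t u_{xx}^2|_{\partial\Omega}\,\dif t$, which is exactly why the $\eta$-terms acquire the extra universal factor $C$ in the final statement. Your bookkeeping of the $\eta$- and $\tau\epsilon$-weighted terms, and the observation that $\theta_{tx}^2|_{\partial\Omega}$ is simply dropped, match the paper.
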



Now, following Lemmas \ref{le3.11} and \ref{le3.14}, we have the following corollary
\begin{corollary}\label{co3.15}
There exists a constant $C$ such that
\begin{align}
 &\int_{0}^{1}(u^2_{xx}+v^2_{xx}+\theta^2_{xx}+\tau   q^2_{xx}+\tau   S^2_{xx})\dif x+\int_{0}^{t}\int_{0}^{1}(q^2_{xx}+S^2_{xx})\dif x\dif t \nonumber\\
 &+\int_{0}^{t}(u^2_{xx}(t,1)+u^2_{xx}(t,0)+\theta^2_{xx}(t,1)+\theta^2_{xx}(t,0)+u^2_{tx}(t,1)+u^2_{tx}(t,0)+q^2_{xx}(t,1)+q^2_{xx}(t,0))\dif t  \nonumber\\
 &\le   C\left(E_0+E^\frac{1}{2}\int_{0}^{t}\mathcal{D}(s)\dif s+E^\frac{3}{2}(t) \right).  \label{new-hu-16}
\end{align}
\end{corollary}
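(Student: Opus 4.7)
The plan is to substitute the boundary estimate \eqref{bjtxxx} of Lemma \ref{le3.14} into the right-hand side of \eqref{ejxx} of Lemma \ref{le3.11}, and then absorb the resulting uncontrolled terms by choosing $\eta$ and $\epsilon$ sufficiently small, with additional help from the companion boundary estimate \eqref{ejtx} of Lemma \ref{le3.9}.

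Concretely, I would start from \eqref{ejxx} and bound its right-hand-side boundary integral $C\int_{0}^{t}\sum_{i=0,1}(u^2_{xx}+\theta^2_{xx}+u^2_{tx}+q^2_{xx})(t,i)\dif t$ using \eqref{bjtxxx}. The resulting terms on the right split into three groups that can be systematically absorbed. The bulk pieces $C\eta\bigl(\int_{0}^{1}\theta^2_{xx}\dif x+\int_{0}^{t}\int_{0}^{1}(q^2_{xx}+S^2_{xx})\dif x\dif t\bigr)$ and $C(\eta)\epsilon\int_{0}^{t}\int_{0}^{1}S^2_{xx}\dif x\dif t$ are absorbed into the matching terms on the left-hand side of \eqref{ejxx}, after choosing $\eta$ small and then $\epsilon$ small relative to $C(\eta)$. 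The boundary contribution $C\tfrac{\tau\epsilon}{8\mu}\eta\int_{0}^{t}(S^2_{xx}(t,0)+S^2_{xx}(t,1))\dif t$ is swallowed by $\tfrac{3\tau\epsilon}{8\mu}\int_{0}^{t}(S^2_{xx}(t,0)+S^2_{xx}(t,1))\dif t$ on the left of \eqref{ejxx}, provided $\eta$ is small enough. The remaining term $\tfrac{2\tau\epsilon}{\mu}C(\eta)\int_{0}^{t}(u^2_{xx}+u^2_{tx})(t,i)\dif t$ has the exact form of the $u^2_{xx}$ and $u^2_{tx}$ boundary integrals we wish to control in \eqref{new-hu-16}, so it can be moved back to the left and absorbed once $\epsilon$ is fixed sufficiently small.

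The main obstacle is the leftover term $C\tfrac{\tau\epsilon}{8\mu}\eta\int_{0}^{t}(S^2_{tx}(t,0)+S^2_{tx}(t,1))\dif t$, because no $S_{tx}$ boundary contribution appears on the left-hand side of \eqref{ejxx}. To deal with it I would invoke \eqref{ejtx}: its left-hand side already contains $\tfrac{3\tau\epsilon}{8\mu}\int_{0}^{t}(S^2_{tx}(t,0)+S^2_{tx}(t,1))\dif t$ and is bounded by $C\bigl(E_0+E^{1/2}(t)\int_{0}^{t}\mathcal{D}(s)\dif s+E^{3/2}(t)\bigr)$, so this piece is directly majorized by the desired right-hand side. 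The $q^2_{xx}(t,i)$ boundary contributions in \eqref{new-hu-16} come for free from the left-hand side of \eqref{bjtxxx}. Collecting everything and fixing $\eta$, then $\epsilon$, sufficiently small then yields \eqref{new-hu-16}.
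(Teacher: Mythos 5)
Your proposal is correct and follows essentially the same route as the paper: the authors multiply \eqref{bjtxxx} by $C+1$, add it to \eqref{ejxx}, fix $\eta$ small so the $\eta$-weighted bulk and $S_{xx}$-boundary pieces are absorbed into the left-hand side, and then fix $\epsilon$ small to absorb the remaining $C(\eta)\epsilon$- and $\tau\epsilon C(\eta)$-terms. Your explicit use of \eqref{ejtx} to dispose of the leftover $S_{tx}$ boundary integral is exactly the right (and in the paper only implicit) step, so nothing further is needed.
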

\begin{proof}
Indeed, multiplying \eqref{bjtxxx} by $C+1$, choosing $\eta$ small such that $(C+1)C\eta<\frac{1}{2}$ and fix $\eta$, we have
\begin{align}
 &\int_{0}^{1}(u^2_{xx}+v^2_{xx}+\frac{1}{2}\theta^2_{xx}+\tau   q^2_{xx}+\tau   S^2_{xx})\dif x+\frac{1}{2}\int_{0}^{t}\int_{0}^{1}(q^2_{xx}+S^2_{xx})\dif x\dif t \nonumber\\
 &+\int_{0}^{t}(u^2_{xx}(t,1)+u^2_{xx}(t,0)+\theta^2_{xx}(t,1)+\theta^2_{xx}(t,0)+u^2_{tx}(t,1)+u^2_{tx}(t,0)+q^2_{xx}(t,1)+q^2_{xx}(t,0))\dif t  \nonumber\\
 &\le  C(\eta) \epsilon \int_{0}^{t}\int_{0}^{1}S^2_{xx}\dif x \dif t 
 +\frac{2\tau  \epsilon}{\mu} C(\eta)\int_0^t (u^2_{xx}(t,0)+u^2_{xx}(t,1)+u_{tx}^2(t,0)+u_{tx}^2(t,1))\dif t \nonumber\\
&+ C\left(E_0+E^\frac{1}{2}\int_{0}^{t}\mathcal{D}(s)\dif s+E^\frac{3}{2}(t) \right).  \label{new-hu-15}
\end{align}

For fixed $\eta$, choosing $\epsilon$ sufficiently small, we get the desired result \eqref{new-hu-16} immediately. 
 \end{proof}

On the other hand, using $\eqref{approximate}_2$, Corollaries \ref{co3.10}, \ref{co3.15} , we can easily get the second order dissipation of $v$. 
\begin{corollary}\label{co3.16}
There exists some constant $C$ such that
\begin{align}
\int_{0}^{t}\int_{0}^{1}v^2_{xx}\dif x\dif t \le   C(E_0+E^\frac{1}{2}\int_{0}^{t}\mathcal{D}(s)\dif s+E^\frac{3}{2}(t)).  \label{new-hu-17}
\end{align}
\end{corollary}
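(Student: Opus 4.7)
The plan is to extract $v_{xx}$ algebraically from the spatially differentiated momentum equation and then invoke the estimates already established in Corollaries \ref{co3.10} and \ref{co3.15}. Differentiating $\eqref{approximate}_2$ once in $x$ gives
\begin{align*}
u_{tx}+p_v v_{xx}+p_\theta \theta_{xx}+p_{vv}v_x^2+2p_{v\theta}v_x\theta_x+p_{\theta\theta}\theta_x^2=S_{xx}.
\end{align*}
Since $p_v=-R\theta/v^2$ is strictly negative and bounded away from zero by \eqref{new-hu2-1}, we can solve for $v_{xx}$:
\begin{align*}
v_{xx}=\frac{1}{p_v}\bigl(S_{xx}-u_{tx}-p_\theta\theta_{xx}-p_{vv}v_x^2-2p_{v\theta}v_x\theta_x-p_{\theta\theta}\theta_x^2\bigr).
\end{align*}

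Squaring, integrating over $[0,t]\times[0,1]$, and using $|p_v|\ge c>0$, I obtain
\begin{align*}
\int_0^t\!\!\int_0^1 v_{xx}^2\,\dif x\,\dif t\le C\int_0^t\!\!\int_0^1 \bigl(S_{xx}^2+u_{tx}^2+\theta_{xx}^2\bigr)\,\dif x\,\dif t+C\int_0^t\!\!\int_0^1 \bigl(v_x^4+\theta_x^4+v_x^2\theta_x^2\bigr)\,\dif x\,\dif t.
\end{align*}
The quartic terms are controlled by $\|(v_x,\theta_x)\|_{L^\infty}^2 \|(v_x,\theta_x)\|_{L^2}^2$; by Sobolev embedding and the definition of $E(t)$ this is bounded by $CE^\frac{1}{2}(t)\mathcal D(t)$ after integration in time, hence absorbable into $CE^\frac{1}{2}(t)\int_0^t\mathcal D(s)\dif s$.

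For the linear terms, Corollary \ref{co3.15} directly yields
\begin{align*}
\int_0^t\!\!\int_0^1 S_{xx}^2\,\dif x\,\dif t \le C\bigl(E_0+E^\frac{1}{2}(t)\!\!\int_0^t\mathcal D(s)\,\dif s+E^\frac{3}{2}(t)\bigr),
\end{align*}
and Corollary \ref{co3.10} gives the analogous bound for $\int_0^t\!\int_0^1(u_{tx}^2+\theta_{xx}^2)\,\dif x\,\dif t$ up to a term $C\epsilon\int_0^t\!\int_0^1 S_{xx}^2\,\dif x\,\dif t$, which is again dominated by Corollary \ref{co3.15}. Collecting the three contributions gives \eqref{new-hu-17}.

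There is no real obstacle here since $p_v$ is bounded below away from zero on the solution set: the estimate is essentially an algebraic consequence of the momentum equation combined with the dissipation bounds on $u_{tx}$, $\theta_{xx}$ and $S_{xx}$ obtained earlier. The only mild care needed is to make sure the nonlinear remainder terms $p_{vv}v_x^2$ etc.\ are absorbed into $E^\frac{1}{2}(t)\int_0^t\mathcal D(s)\,\dif s$ via $L^\infty$-$L^2$ splitting, which is standard given the a priori assumption \eqref{new-hu2-1}.
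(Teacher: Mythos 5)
Your proposal is correct and follows exactly the route the paper intends: the paper's proof of Corollary \ref{co3.16} is just the one-line remark that the bound follows from $\eqref{approximate}_2$ together with Corollaries \ref{co3.10} and \ref{co3.15}, i.e.\ solving the $x$-differentiated momentum equation for $v_{xx}$ using $p_v\le -c<0$ and absorbing the quadratic remainders via the $L^\infty$--$L^2$ splitting. Your write-up simply supplies the details of that same argument.
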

Combining Lemmas \ref{le3.8}, \ref{le3.9} and Corollaries \ref{co3.10}, \ref{co3.15}, \ref{co3.16}, we get
\begin{lemma}\label{second}
There exists some constant $C$ such that
\begin{align}
\left\|D(v, u, \theta,  \sqrt{\tau   }q, \sqrt{\tau  }S)\right\|_{H^{1}}^{2}+\tau^2 \| \partial_t^2(v, u, \theta, \sqrt{\tau   } q, \sqrt{\tau  } S)\|_{L^2}^2 \nonumber \\
+\int_{0}^{t}(\left\|D^2(v,u,\theta )\right\|_{L^{2}}^{2} + \|D(q, S)\|_{H^1}^2+ \tau^2 \|\partial_t^2(q,S)\|_{L^2}^2 )  \dif t  \nonumber\\
\le C\left(E(0)+E^\frac{1}{2}(t)\int_{0}^{t}\mathcal{D}(s)\dif s+E^\frac{3}{2}(t) \right).
\end{align}
\end{lemma}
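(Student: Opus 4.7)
The plan is simply to add up the five previously established estimates and check that together they cover every term appearing on the left-hand side of the claimed inequality. Each of the targeted norms is already controlled by exactly one (or one combination) of Lemma \ref{le3.8}, Lemma \ref{le3.9}, Corollary \ref{co3.10}, Corollary \ref{co3.15}, or Corollary \ref{co3.16}, and in every case the right-hand side is bounded by the quantity $E_0+E^{1/2}(t)\int_0^t\mathcal{D}(s)\,\dif s+E^{3/2}(t)$ (with the last term absent in those estimates that were closed earlier in the chain; this is harmless since we may always enlarge the bound).

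First I would handle the pointwise-in-time quantities $\|D(v,u,\theta,\sqrt{\tau}q,\sqrt{\tau}S)\|_{H^1}^2$ and $\tau^2\|\partial_t^2(v,u,\theta,\sqrt{\tau}q,\sqrt{\tau}S)\|_{L^2}^2$. The $\partial_t\partial_x$--piece comes from \eqref{ejtx} in Lemma \ref{le3.9}, the pure $\partial_x^2$--piece (and its $\sqrt{\tau}$-weighted counterparts) from \eqref{new-hu-16} in Corollary \ref{co3.15}, and the $\tau^2\partial_t^2$--piece from \eqref{ejtt} in Lemma \ref{le3.8}. Adding these three bounds gives the full pointwise-in-time part of the left-hand side of the desired inequality.

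Next I would assemble the time-integrated dissipation. The bound on $\int_0^t\|D(q,S)\|_{H^1}^2\,\dif s$ follows from the last-line integrals in \eqref{ejtx}, \eqref{yjgjx}, \eqref{new-hu-16} (already folded into Corollary \ref{co3.15}); the bound on $\int_0^t\tau^2\|\partial_t^2(q,S)\|_{L^2}^2\,\dif s$ follows from \eqref{ejtt}. For the hardest piece, $\int_0^t\|D^2(v,u,\theta)\|_{L^2}^2\,\dif s$, Corollary \ref{co3.10} already supplies all the mixed and pure time derivatives of $(v,u,\theta)$ together with $\int_0^t\int_0^1(u_{xx}^2+\theta_{xx}^2)$; the missing piece $\int_0^t\int_0^1 v_{xx}^2$ is precisely what Corollary \ref{co3.16} provides. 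Note that in Corollary \ref{co3.10} the term $\epsilon\int_0^t\int_0^1 S_{xx}^2$ appears, but this has already been absorbed when proving Corollary \ref{co3.15}, so one may invoke the bound there directly.

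There is no real obstacle remaining: all boundary terms (in particular $u_{tx}|_{\partial\Omega}$, $u_{xx}|_{\partial\Omega}$, $\theta_{xx}|_{\partial\Omega}$, $q_{xx}|_{\partial\Omega}$) that arose when integrating by parts have been handled once and for all in Lemmas \ref{le-hu-2}, \ref{le-hu-1}, and the two pointwise inequalities \eqref{q_{xx}(t,1)}--\eqref{q_{xx}(t,0)}, whose effect is packaged inside Corollary \ref{co3.15}. Consequently the proof reduces to writing $E(t)+\int_0^t\mathcal{D}(s)\,\dif s$ as a finite sum of the left-hand sides of the five results cited and bounding each summand by the corresponding right-hand side, yielding the stated inequality with a new universal constant $C$ independent of $\tau$ and $\epsilon$.
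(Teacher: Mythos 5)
Your proposal is correct and follows exactly the paper's own route: the paper proves this lemma with the single sentence ``Combining Lemmas \ref{le3.8}, \ref{le3.9} and Corollaries \ref{co3.10}, \ref{co3.15}, \ref{co3.16}, we get...'', and your accounting of which estimate supplies which term (including the observation that the $\epsilon\int_0^t\int_0^1 S_{xx}^2$ remainder of Corollary \ref{co3.10} is absorbed via Corollary \ref{co3.15}, and that $v_{xx}$ dissipation comes from Corollary \ref{co3.16}) is a faithful, slightly more explicit version of the same argument.
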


Therefore, combining Lemma $\ref{le3.7}$ and $\ref{second}$,  the  Proposition \ref{zong} follows immediately.

\section{Passing to the limit and proof of main theorems}
\textbf{Proof of Theorem \ref{global}:} According to Proposition \ref{zong}, the local solution $(v^\epsilon,u^\epsilon,\theta^\epsilon,q^\epsilon,S^\epsilon)$ can be extend to $[0,\infty)$ by usual continuation methods. Thus, for fixed $\epsilon$, we get a global solution $(v^\epsilon,u^\epsilon,\theta^\epsilon,q^\epsilon,S^\epsilon)$ to system \eqref{approximate}-\eqref{aboundary} satisfying 
{\small
\begin{align}
		&\sup_{0\le t<\infty} \left(\SUM k 0 1 \left\|\partial_t^k\left(v^\epsilon-1, u^\epsilon, \theta^\epsilon-1, \sqrt{\tau   }q^\epsilon, \sqrt{\tau  }S^\epsilon\right)(t, \cdot)\right\|_{H^{2-k}}^{2}+\tau^2 \left\| \partial_t^2 \left(v^\epsilon, u^\epsilon, \theta^\epsilon, \sqrt{\tau} q^\epsilon, \sqrt{\tau} S^\epsilon\right(t, \cdot))\right\|_{L^2}^2\right),\nonumber\\
		&+\int_{0}^{\infty}\left( \sum_{|\alpha|=1}^{2}\left\|D^{\alpha}(v^\epsilon, u^\epsilon, \theta^\epsilon)(t, \cdot)\right\|_{L^{2}}^{2}+ \SUM k 0 1 \left\|\partial_t^k(q^\epsilon,S^\epsilon)(t ,\cdot)\right\|_{H^{2-k}}^{2}+\tau^2 \|(q^\epsilon_{tt}, S^\epsilon_{tt})(t, \cdot)\|_{L^2}^2\right)\dif t\le CE_0,  \label{p1.1}
	\end{align}
	}
where $C$ is a constant independent of $\epsilon  $ and $\tau  $. Therefore, for fixed $\tau$, the uniform bounds of  $(v^\epsilon-1,u^\epsilon,\theta^\epsilon-1,\sqrt{\tau   }q^\epsilon,\sqrt{\tau  }S^\epsilon)$ in $L^\infty([0,\infty),H^2(\Omega))$ implies that there exists $(v,u,\theta,q,S) \in L^\infty([0,\infty),H^2(\Omega))$ such that
$$(v^\epsilon,u^\epsilon,\theta^\epsilon,\sqrt{\tau   }q^\epsilon,\sqrt{\tau  }S^\epsilon)\rightharpoonup (v,u,\theta,\sqrt{\tau   }q,\sqrt{\tau  }S)\quad
\mbox{\rm  weakly - $\ast$ \quad in} \quad L^\infty(\mathbb{R}^+;H^2(\Omega)).$$

On the other hand, since $\partial_t^k (v^\epsilon,u^\epsilon,  \theta^\epsilon,   q^\epsilon, S^\epsilon), k=1, 2$ are uniformly bounded with respect to $\epsilon$  in $L^\infty(0, T;H^{2-k})\cap L^2((0,T; H^{2-k})$ for any $T>0$, by classical compactness theorem, $(v^\epsilon,u^\epsilon,\theta^\epsilon,q^\epsilon,S^\epsilon)$ are relative compact in 
 $C^0([0,T],H^{2-\delta_0} )\cap  C^1([0,T], H^{1-\delta_0})$ for any $\delta_0>0$.
  As a consequence, as $\epsilon\rightarrow 0$ and up to subsequences, 
$$
(v^\epsilon,u^\epsilon,\theta^\epsilon,q^\epsilon,S^\epsilon) \rightarrow (v,u,\theta,q,S), \quad \mbox{\rm  strongly\quad in } \quad C^0([0,T],H^{2-\delta_0} )\cap  C^1([0,T], H^{1-\delta_0}).
$$
This is sufficient to passing to the limit in \eqref{approximate} and \eqref{p1.1}. Thus, we derive that $(v, u, \theta, q, S)$ are classical solutions of  the system \eqref{yuan}-\eqref{boundary} satisfying \eqref{new-hu5-1}. Since  
$$
(v, u, \theta, q, S)\in C([0,\infty),H^{2-\delta_0}(\Omega))\cap L^\infty((0,\infty), H^2(\Omega)) \cap L^2((0,\infty), H^1(\Omega)),
$$ the uniqueness follows immediately. Thus, the proof of Theorem \ref{1.1} is finished.

\textbf{Proof of Theorem \ref{weak convergence}:}  Let $(v^\tau,u^\tau,\theta^\tau,q^\tau,S^\tau)$ be the global solutions obtained in Theorem \ref{1.1} satisfying \eqref{new-hu5-1}. 
Then, there exists $(v^0,u^0,\theta^0) \in L^\infty([0,\infty),H^2(\Omega))$ and $(q^0,S^0) \in L^2([0,\infty),H^2(\Omega))$ such that
$$(v^\tau,u^\tau,\theta^\tau)\rightharpoonup(v^0,u^0,\theta^0) \quad \mbox{\rm  weakly- $\ast$ \quad in} \quad L^\infty([0,\infty);H^2(\Omega)),$$
and
$$(q^\tau,S^\tau)\rightharpoonup(q^0,S^0) \quad \mbox{\rm  weakly - \quad in} \quad L^2([0,\infty);H^2(\Omega)).$$

Since $\partial_t^k(v^\tau, u^\tau, \theta^\tau), k=1, 2$ are uniform bounded  with respect to $\tau$ in $L^2([0,\infty);H^{2-k}(\Omega))$, by classical compactness theorem, for any $\delta_0>0$, $(v^\tau,u^\tau,\theta^\tau)$ are relatively compact in $C([0,T],H^{2-\delta_0}) \cap C^1([0,T], H^{1-\delta_0})$ for any $T>0$. As a consequence, as $\tau  \rightarrow 0$ and up to subsequences, 
$$
(v^\tau,u^\tau,\theta^\tau) \rightarrow (v^0,u^0,\theta^0), \quad \mbox{\rm  strongly\quad in } \quad C([0,T],H^{2-\delta_0}(\Omega))\cap C^1([0,T], H^{1-\delta_0}).
$$

On the other hand, from the estimate \eqref{new-hu5-1} and the constitutive equations $\eqref{yuan}_4, \eqref{yuan}_5$, we have
\begin{align}\label{new-hu3-2}
\sup_{0\le t <\infty} \|(q^\tau, S^\tau)\|_{H^1} \le CE_0. 
\end{align}
Since $(q^\tau_t, S^\tau_t)$ are bounded in $L^2([0,\infty); H^1(\Omega)$, the sequence $(q^\tau, S^\tau)$ are relatively compact in $C([0,T]; H^{1-\delta}(\Omega))$ for any $T>0$.  Therefore, as 
$\tau   \rightarrow 0 $ and up to subsequences, 
$$(q^\tau, S^\tau) \rightarrow (q^0,S^0), \quad \mbox{\rm  strongly\quad in } \quad C([0,T],H^{1-\delta_0}(\Omega)).$$

The uniform boundedness of $(\sqrt{\tau   }q^\tau,\sqrt{\tau  }S^\tau)$ yields $(\tau   q^\tau,\tau  S^\tau)\rightarrow (0,0)$ in $L^\infty((0,\infty),H^2(\Omega))$ , which leads to $(\tau   \partial_tq^\tau,\tau  \partial_tS^\tau)\rightarrow (0,0)$ in $D^\prime((0,\infty)\times \Omega)$ as $\tau  \rightarrow 0$. Thus, taking limit in the equations
 $\eqref{yuan}_4, \eqref{yuan}_5$, we get
\begin{align}\label{new-hu5-2}
q^0=\frac{\kappa(\theta^0)(\theta^0)_x}{v^0},\quad S^0=\mu\frac{(u^0)_x}{v^0}. \quad
\mbox{\rm a.e.} \quad (0,\infty)\times \Omega.
\end{align}
Then, passing to the limit in $\eqref{yuan}_1-\eqref{yuan}_3$ and using \eqref{new-hu5-2}, the functions $(v^0, u^0, \theta^0)$ satisfy the classical compressible Navier-Stokes-Fourier system \eqref{classical}. Thus, the proof of Theorem \ref{1.2} is finished.

{\bf Acknowledgement:} Yuxi Hu's Research is supported by the Fundamental Research Funds for the Central Universities (No. 2023ZKPYLX01).
 
\end{document}